\newcommand{\comments}[1]{}
\numberwithin{equation}{section}
\newcommand{\ve}{\varepsilon}
\newcommand{\be}{\begin{equation}}
\newcommand{\ee}{\end{equation}}
\newcommand{\ba}{\begin{align}}
\newcommand{\ea}{\end{align}}
\newcommand{\abs}[1]{\lvert#1\rvert}
\newtheorem{example}{Example}[section]
\newtheorem{theorem}{Theorem}[section]
\newtheorem{lemma}{Lemma}[section]
\newtheorem{conjecture}{Conjecture}[section]
{\begin{list}{}{%
\settowidth{\labelwidth}{\textsf{{\it #1.}}}%
\setlength{\labelsep}{4mm}%
\setlength{\leftmargin}{\labelwidth}%
\addtolength{\leftmargin}{\labelsep}%
}}%
{\end{list}}
\title{A generalization of the Goresky-Klapper conjecture, Part I}
\author[]{Badria Alsulmi}
\address{ Department of Mathematics\\
Umm al-Quara University\\
Mecca 24382 \\ Saudi Arabia}
\email{bmsulmi@uqu.edu.sa}
\author[]{Todd Cochrane}
\address{ Department of Mathematics\\
         Kansas State University\\
         Manhattan, KS 66506 USA}
\email{cochrane@ksu.edu, pinner@ksu.edu, crichardson@ksu.edu, ianiat11@gmail.com}
\author[]{Michael J. Mossinghoff}\thanks{This work was supported in part by a grant from the Simons Foundation (\#426694 to M.~J. Mossinghoff).}
\address{Department of Mathematics \&  Computer Science \\Davidson College\\Davidson, NC 28035, USA. {\it E-mail address:} {\tt mimossinghoff@davidson.edu}}
\author[]{Vincent Pigno}
\address{ Department of Mathematics \& Statistics\\
California State University, Sacramento\\
Sacramento, CA 95819\\ USA.  {\it E-mail address:} {\tt vincent.pigno@csus.edu}}
\author[]{Chris Pinner}
\author[]{C. J.  Richardson}
\author[]{Ian  Thompson}
\keywords{Permutations, Goresky-Klapper conjecture. }
\subjclass[2010]{Primary: 11A07; Secondary: 11B50, 11L07,  11L03.}
\date{\today}
\thanks{Part of this paper was  a thesis project for the sixth author and an undergraduate research project for the seventh author.}
\begin{document}
\selectlanguage{english}

\begin{abstract} For  a fixed integer $n\geq 2,$ we show that  a permutation of the least residues mod $p$ of the  form $f(x)=Ax^k$ mod $p$ cannot map a residue class mod $n$ to just one residue class mod $n$  once $p$ is sufficiently large, other than the maps $f(x)=\pm x$  mod $p$ when $n$ is even and $f(x)=\pm x$ or $\pm x^{(p+1)/2}$ mod $p$ when $n$ is odd.

\end{abstract}

\maketitle

\section{Introduction}\label{secIntroduction}

For an odd prime $p$ we let $I$ denote the reduced residues mod $p$,
$$ I=\{1,2,\ldots ,p-1\}, $$
and $A$ and $k$ integers  with
\be \label{range}  |A| < p/2, \;\; p\nmid A,\;\;\; 1\leq k< p-1,\;\; \gcd(k,p-1)=1, \ee
so that the map $f:I\rightarrow I$ given by
$$ f(x)= Ax^k \text{ mod } p, $$
 is a permutation of $I$.

Goresky \& Klapper \cite{GK1} divided $I$ into the even and odd residues
$$ E=\{2,4,\ldots ,p-1\},\, \;\;\; O=\{1,3,\ldots , p-2\}, $$
and asked when $f$ could  also be  a permutation of $E$ (equivalently  $O$). Originally the problem was phrased in terms of decimations of $\ell$-sequences and was restricted to cases where 2 is a primitive root mod $p,$ but this is the form that we are interested in here. Apart from the identity map $(p;A,k)=(p;1,1)$ they found six cases
$$ (p;A,k)=(5;-2,3), (7;1,5), (11;-2,3),(11;3,7), (11;5,9),(13;1,5), $$
and conjectured that there were no more for $p>13$. This was proved for sufficiently large $p$ in \cite{Bourgain} and in full in \cite{CochKony}, with asymptotic counts on $\abs{f(E)\cap O}$ considered in \cite{Bourgain2}. Since $x\mapsto p-x$ switches elements of $E$ and $O$,  this is the same as asking when $f(E)=O$ or $f(O)=E$ on replacing $A$ by $-A.$

Somewhat related is a question of Lehmer \cite[Problem F12, p. 381]{guy} concerning the number of  $x$ mod $p$ whose inverse,  $f(x)=x^{-1}$ mod $p$,  has opposite parity. Since $k$ is defined mod $(p-1)$ it is  sometimes useful to allow negative exponents, $|k|<(p-1)/2$.
This problem was solved by Zhang \cite{Wenpeng} using Kloosterman sums; see  also the generalizations by Alkan, Stan and Zaharescu \cite{Alkan}, Lu and Yi \cite{luyi1,luyi2}, Shparlinski \cite{Igor,Igor2}, Xi and Yi \cite{xiyi}, and Yi and Zhang \cite{yizhang}.

Thinking of the evens and odds as a mod 2 restriction, we can ask a similar question for a general modulus $n$. Namely we can divide
up $I$ into the $n$ congruence classes mod $n$
\begin{equation} \label{Ij}
I_j=\{x \; :\; 1\leq x\leq p-1,\; x\equiv j \hbox{ mod }n \},\;\; j=0,\ldots ,n-1,
\end{equation}
and ask for examples of the following types.

{\bf Type (i):} $f(I_j)=I_j$  for all $j=0,\ldots, n-1.$

{\bf Type (iia):}  $f(I_0),\ldots ,f(I_{n-1})$ a permutation of $I_0,\ldots ,I_{n-1}$.

{\bf Type (iib):}   $f(I_j)=I_j$  for some  $j$.

{\bf Type (iii):}  There is a pair  $i,j$ with  $f(I_i)\subseteq I_j.$

{\bf Type (iv):}  There is a pair  $i,j$ with $f(I_i)\cap I_j=\emptyset. $


In this paper we will be primarily be interested in the Type (i)-(iii) maps, though we will include some special cases of Type (iv), for example when
\be \label{defd}  d:=\gcd (k-1,p-1)   \ee
is small. We return to consider general Type (iv) in Part II.

Notice that for $n=2$  determining Type (i) through Type (iv)  are all the same problem, but for  general $n$ they can be quite different
 (indeed the $I_j$ will not even have the same cardinality unless we restrict to $p\equiv 1$ mod $n$).
 Note that these requirements become successively weaker (and the claim that there are no such examples for large enough $p$ a successively stronger statement) as we move from (i) to (iia) or (iib), to (iii), to (iv).
 To make sense here we should probably  think of $p$ growing with $n$, for example we  shall assume throughout that $p>n+1,$
otherwise all the residue classes have only 0 or 1 element and every permutation will be a Type  (iia).
Similarly  if a permutation is not a Type
 (iii),  or  Type (iv), then we are demanding at least two, or at least $n$, values in each image of each residue class and so must have $p> 2n$, or  $p>n^2,$  for this
 to have any chance of being true.

If the map $f$ randomly distributes the values mod $n$ then we might expect to have $|f(I_i)\cap I_j| \sim  p/n^2$
and  so,  for fixed $n$,  no examples of Type (i) through (iv) once  $p$ is sufficiently large.
 However, as shown in \cite{Bourgain2} for $n=2$,  if the parameter
$ d=\gcd (k-1,p-1)   $
is large we can't expect this equal distribution.

Indeed when $n$ is odd it is not hard to see that we will have infinitely many examples of Type (iib) in addition to the identity map. 

\begin{example}\label{ex1} Suppose that $p \equiv 1$ mod $4$ and that
$$f(x)=\pm x^{(p+1)/2} \text{ mod } p. $$
If $n$ is odd and $i\equiv 2^{-1}p$ mod $n$  then
$$f(I_i)=I_i. $$
If  $n$ is even, or $n$ is odd with $i \not\equiv  2^{-1}p$ mod $n$, and $p>(n+1)^2$, then
 $f(I_i)$ hits exactly two residue classes,  namely $I_i$ and $I_{\overline{i}}$
where $\overline{i}\equiv p-i$ mod $n$.
\end{example}
The proof of Example \ref{ex1}  will be given in Section \ref{ProofEx}.
At the expense of the explicit constant the   condition $p>(n+1)^2$ could be replaced  with  $p\gg (n\log n)^{4/3}$ using the Burgess \cite{Burgess} bound $O(p^{1/4}\log p)$
for gaps between quadratic residues or nonresidues.

A similar situation occurs for the map $f(x)=-x$ mod $p$; if $p>n$ and $n$ is even then the $f(I_j)=I_{\overline{j}}$ will be a derangement (i.e., a permutation fixing no element)
of the $I_j$, while  if $n$ is odd this $f$ will fix the $I_i$  with $i\equiv 2^{-1}p$ mod $n$ and derange  the remaining $I_j$.

Notice that in these examples the value of $d$ is unusually large, namely  $d=p-1$ or $(p-1)/2.$  If $d$ is not large then in fact each residue class  does receive its fair share of values:

\begin{theorem} \label{asym} For all $i,j$
\be \label{asymptotic}  |f(I_i)\cap I_j| =\frac{p}{n^2} + O(d\log^2p) +O(p^{89/92}\log^2 p). \ee
In particular,
if $n$ is fixed and $d=o(p/\log^2 p),$  then
$$|f(I_i)\cap I_j|\sim p/n^2.$$
\end{theorem}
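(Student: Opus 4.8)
The plan is to detect both congruence conditions by additive characters and so reduce the count to a main term plus a weighted family of complete exponential sums, and then to bound those sums. Write $e_m(z)=e^{2\pi i z/m}$. The condition $x\equiv i \pmod n$ is detected by $\frac1n\sum_{a=0}^{n-1}e_n(a(x-i))$, while for the second condition I would use that the least residue $y=Ax^k \bmod p$ agrees with $Ax^k$ as an element of $\mathbb{Z}/p$: Fourier expanding, over additive characters mod $p$, the indicator of $\{y:0\le y\le p-1,\ y\equiv j \pmod n\}$ lets me replace each $e_p(ty)$ by $e_p(tAx^k)$. Carrying out both expansions gives
\[ \abs{f(I_i)\cap I_j}=\frac{1}{np}\sum_{a=0}^{n-1}\sum_{t=0}^{p-1} c_{a,t}\, S(a,t), \qquad S(a,t)=\sum_{x=1}^{p-1}e_n(ax)\,e_p(tAx^k), \]
where $c_{a,t}$ packages the two sets of Fourier coefficients together with the phase $e_n(-ai)$. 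The coefficients of the residue-class indicator decay like $1/\|t/p\|$, and the completion of the progression contributes a second such factor, so the total weight satisfies $\sum_{a,t}\abs{c_{a,t}}\ll p\log^2 p$.

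The frequency $a=t=0$ supplies the main term: here $S(0,0)=p-1$ and $c_{0,0}\approx p/n$, giving $p/n^2+O(1)$. Next I would dispose of the degenerate frequencies. For $a=0,\ t\neq 0$ we have $S(0,t)=\sum_{x}e_p(tAx^k)=-1$, since $\gcd(k,p-1)=1$ makes $x\mapsto x^k$ a bijection of the nonzero residues; and for $t=0,\ a\neq 0$ the sum $S(a,0)=\sum_{x=1}^{p-1}e_n(ax)$ is $O(1)$ by periodicity mod $n$. Thus the only genuinely large contributions come from $a\neq 0$ and $t\neq 0$. For these I would either complete $e_n(ax)$ into characters mod $p$ (at the cost of one more $\log p$, already accounted for) or bound the mixed sum directly; in either case the object to estimate is $\sum_x e_p(sx+tAx^k)$ with $s,t\neq 0$.

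The hard part is exactly this bound, because $x^k$ has large degree and Weil's estimate $O((k-1)\sqrt p)$ is worthless once $k$ is comparable to $p$. I expect the analysis to split along $d=\gcd(k-1,p-1)$: the resonant configurations, where the phase $sx+tAx^k$ is nearly linear, are concentrated on the $\approx d$ solutions of $x^{k-1}\equiv\text{const}$ and contribute $O(d)$; weighted by the two completions this yields the $O(d\log^2 p)$ term. For all remaining $(s,t)$ I would invoke the known nontrivial bound for the monomial-plus-linear sum that is uniform in $k$, of the shape $\abs{\sum_x e_p(sx+tAx^k)}\ll p^{89/92}$, which after weighting gives the $O(p^{89/92}\log^2 p)$ term. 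Combining the pieces produces the stated estimate, and the asymptotic $\abs{f(I_i)\cap I_j}\sim p/n^2$ follows immediately once $d=o(p/\log^2 p)$ with $n$ fixed. The main obstacle, and the deepest input, is securing that uniform-in-$k$ sum bound together with the $d$-resonance bookkeeping, since everything else is routine character manipulation.
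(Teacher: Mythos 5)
Your proposal is essentially the paper's own argument: the paper (Theorem \ref{smallgcd}) expands both indicator functions $\mathscr{I}_i$, $\mathscr{I}_j$ in additive characters mod $p$ directly, reduces to the complete binomial sums $\sum_x e_p(ux+vAx^k)$, and bounds them uniformly by the cited Cochrane--Pinner estimate $1+d+2.292\,p^{89/92}$, with the two Fourier-coefficient $L^1$ norms (each $\ll \log p$) supplying the $\log^2 p$. Your detour through characters mod $n$ followed by completion to characters mod $p$ collapses to the same decomposition, and the ``deepest input'' you correctly isolate is exactly the external bound \eqref{binbound} the paper invokes rather than proves.
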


\noindent
This follows at once from the  more numerically precise statement  in Theorem \ref{smallgcd} below, and relies on bounds for
binomial exponential sums
\be \label{binsum}  \sum_{x=1}^{p-1} e_p(ax^k+bx). \ee

As we show in Theorem \ref{biggcd}
below, if we avoid those few cases in  Example \ref{ex1},  then  even for large $d$,  for a given $n$ there are at most finitely many cases of Type  (iii); that is for all other mappings the image of each residue class $f(I_i)$ hits at least two different residue classes mod $n$.

\begin{theorem}\label{main} If $n$ is even and $f(x)\neq \pm x$ mod $p$  or if $n$ is odd and  $f(x)\neq \pm x$ or $\pm x^{(p+1)/2}$ mod $p$,  then there are no $i$, $j$ with $f(I_i)\subseteq I_j$ once
$$p\geq  9\cdot 10^{34}\: n^{92/3}. $$
\end{theorem}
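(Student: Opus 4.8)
The plan is to dispose of Type~(iii) by a dichotomy on the size of $d=\gcd(k-1,p-1)$, using Theorem~\ref{asym} (equivalently the precise Theorem~\ref{smallgcd}) when $d$ is small and the structural Theorem~\ref{biggcd} when $d$ is large, the split being chosen so that both regimes are covered once $p\geq 9\cdot 10^{34}n^{92/3}$. The guiding observation is that a relation $f(I_i)\subseteq I_j$ is an extremely rigid coincidence: since $f$ is a bijection, $\abs{f(I_i)}=\abs{I_i}$ has size about $p/n$, yet it is forced to live entirely inside the single class $I_j$. Note also that all four exceptional maps $\pm x,\pm x^{(p+1)/2}$ have $d=p-1$ or $(p-1)/2$, so they will be felt only in the large-$d$ case; the small-$d$ case should yield a clean, exception-free contradiction.

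I would first treat the small range, say $d\ll p/(n\log^2 p)$. Suppose $f(I_i)\subseteq I_j$. The key is to test the count against the \emph{occupied} class $I_j$ rather than an empty one: because every element of $f(I_i)$ lands in $I_j$ we have $\abs{f(I_i)\cap I_j}=\abs{I_i}\geq (p-1)/n-1$, whereas Theorem~\ref{asym} predicts $\abs{f(I_i)\cap I_j}=p/n^2+O(d\log^2 p)+O(p^{89/92}\log^2 p)$. Subtracting, the error term is forced to absorb a discrepancy of size at least $p/n-p/n^2\geq p/(2n)$ for $n\geq 2$, so $p/(2n)\ll d\log^2 p+p^{89/92}\log^2 p$. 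In the small-$d$ range the first error term is negligible, leaving $p^{3/92}\ll n\log^2 p$; this is exactly the inequality that fails once $p\geq 9\cdot 10^{34}n^{92/3}$, the exponent $92/3=(1-\tfrac{89}{92})^{-1}$ being dictated by the exponential-sum exponent $89/92$ and the constant absorbing the implied constants and the $\log^2 p$ factor. Had one tested instead against an empty class $I_{j'}$, the discrepancy would only be $p/n^2$, yielding the weaker and unusable threshold $p\geq n^{184/3}$; testing against $I_j$ is what saves the factor.

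Internally this rests on the character machinery behind Theorem~\ref{smallgcd}: detecting the congruence $x\equiv i\bmod n$ with additive characters mod $n$, and detecting the residue class mod $n$ of the least residue $Ax^k\bmod p$ through a sawtooth/Fourier expansion in additive characters mod $p$ (whose coefficients decay harmonically and produce the $\log^2 p$ factors), one is reduced to the complete binomial sums $\sum_{x=1}^{p-1}e_p(ax^k+bx)$ of \eqref{binsum}, and the bound for these is precisely what yields the $p^{89/92}$ and $d$ contributions.

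The remaining and genuinely harder case is large $d$, $d\gg p/(n\log^2 p)$, where the $O(d\log^2 p)$ term swamps the main term and Theorem~\ref{asym} says nothing. Here I would invoke Theorem~\ref{biggcd}, which classifies the large-$d$ Type~(iii) maps and shows that, apart from the families isolated in Example~\ref{ex1} --- namely $f(x)=\pm x$, and for odd $n$ also $f(x)=\pm x^{(p+1)/2}$ --- there are only finitely many, hence none once $p$ exceeds the stated bound. This is where the main obstacle lies: one must understand the arithmetic of maps with exceptionally large $d$ (so that $x^k$ agrees, on a large subgroup, with a power such as $x$ or $x^{(p\pm1)/2}$), show directly that concentration of $f(I_i)$ in one class forces $f$ into one of these rigid forms, and then check that those forms are exactly the excluded exceptions. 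Reconciling the two ranges --- ensuring that the small-$d$ deduction and the hypotheses of Theorem~\ref{biggcd} together cover every value of $d$ at the chosen threshold, with all constants made explicit so as to land on $9\cdot 10^{34}n^{92/3}$ --- is the final bookkeeping step.
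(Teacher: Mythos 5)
Your overall architecture --- a dichotomy on $d=\gcd(k-1,p-1)$, with Theorem~\ref{biggcd} for large $d$ and a Fourier/exponential-sum count for small $d$, together with the observation that one must exploit the full containment $f(I_i)\subseteq I_j$ (equivalently, test against a set of density $1-\tfrac1n$ rather than $\tfrac1n$) to get the exponent $92/3$ instead of $184/3$ --- matches the paper's. But your small-$d$ step has a genuine gap: it does not close at the stated threshold. From Theorem~\ref{smallgcd} the forced discrepancy $|E|\geq \tfrac{p}{2n}-2$ only yields a contradiction when
$$\left(d+1+2.293\,p^{89/92}\right)\left(\tfrac{4}{\pi^2}\log p+0.381\right)^2<\tfrac{p}{2n}-2,$$
which for small $d$ amounts to $p^{3/92}\gtrsim n\log^2 p$. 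At $n=2$ and $p=9\cdot10^{34}\cdot 2^{92/3}\approx 1.5\times10^{44}$ one has $p^{3/92}\approx 27$ while $n\log^2 p\approx 2\times10^4$, so no contradiction is obtained; the crossover only occurs for $p$ of order $e^{400}$. The $\log^2 p$ factor cannot be ``absorbed into the constant'': unwinding $p^{3/92}\gg n\log^2 p$ gives $p\gg n^{92/3}(\log p)^{184/3}$, and a polylogarithm raised to the power $184/3$ dwarfs $9\cdot10^{34}$ at the relevant scale.

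The missing ingredient is the paper's Theorem~\ref{smallgcd2}(b), which eliminates the logarithms. There the sharp indicators of $I_i$ and of the complement $I_j^c$ are replaced by convolutions $\mathscr I_{J}*\mathscr I_{K}$ of indicators of two sub-progressions of about half the length; these weights are nonnegative, supported on the right sets, and by Cauchy--Schwarz plus Parseval their Fourier coefficients satisfy $\sum_v|b(v)|\leq |J|^{1/2}|K|^{1/2}$ with no $\log p$ loss. Feeding in the binomial-sum bound $1+d+2.292\,p^{89/92}$ then gives $f(I_i)\cap I_j^c\neq\emptyset$ once $|J_j'||K_j'||J_i||K_i|>p^2(2.299\,p^{89/92})^2$, and since $|J_j'|,|K_j'|\gtrsim p/3$ while $|J_i|,|K_i|\gtrsim p/(2n)$ this reduces to $p^{3/46}\gtrsim 190\,n^2$, i.e.\ $p>9\cdot10^{34}n^{92/3}$. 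With that substitution your dichotomy closes exactly as in the paper, the two regimes $d\leq 0.006\,p^{89/92}$ and $d\geq 0.66\,np^{1/2}\log^2 p$ overlapping once $p^{43/92}/\log^2 p\geq 110n$. A minor further point: Theorem~\ref{biggcd} is not a classification-then-finiteness statement; it directly exhibits an element of $f(I_i)$ outside $I_j$ by restricting to the coset where $f(x)\equiv Cx$ for some $C\neq\pm1$, but since you only use its conclusion this does not affect your argument.
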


In the linear case we can be even more precise:

\begin{theorem} \label{mainiiik=1} Suppose that $f(x) = Ax$  mod $p$.

For $p>2n$ there are no Type (iii)  linear maps  $f(x)\neq \pm x$ mod $p$.

\end{theorem}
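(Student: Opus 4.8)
The plan is to exploit the rigid arithmetic-progression structure of each class under the linear map $f(x)=Ax \bmod p$. Write $r := An \bmod p \in\{1,\dots,p-1\}$. The elements of a class $I_i$ form an arithmetic progression $x_0, x_0+n, \dots, M_i$ of common difference $n$, where $x_0\le n$ is the least element and $M_i$ the largest; since $p>2n$, the window $\{1,\dots,2n\}\subseteq I$ meets each class twice, so every $I_i$ genuinely has at least one ``step''. Applying $f$, consecutive elements $x,x+n$ have images differing, as integers in $\{1,\dots,p-1\}$, by exactly $r$ (no wrap) or $r-p$ (wrap). First I would observe that if $f(I_i)\subseteq I_j$ then every such difference is $\equiv 0 \bmod n$, which forces $r\equiv 0$ or $r\equiv p \pmod n$; these cannot both hold because $p\not\equiv 0 \pmod n$. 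This cleanly splits the analysis into a ``no-wrap'' case ($n\mid r$) and an ``all-wrap'' case ($r\equiv p \pmod n$).

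The all-wrap case reduces to the no-wrap case by symmetry: replacing $f$ by $x\mapsto p-f(x)=(-A)x \bmod p$ sends $I_j$ to $I_{j'}$ with $j'\equiv p-j \pmod n$, is again a map of the allowed form \eqref{range}, and carries $r$ to $p-r\equiv 0 \pmod n$. Hence it suffices to treat $n\mid r$ and conclude $A=1$ there, the symmetry then yielding $A=-1$ in the other case. For the no-wrap case I would first pin down $A$ from $n\mid r$: writing $r=nA'$ with $1\le A'\le\lfloor(p-1)/n\rfloor$ and using $\gcd(n,p)=1$ gives $A\equiv A' \pmod p$, whence $|A|<p/2$ forces $A=A'>0$ and, crucially, $A\le (p-1)/n$. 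Consequently $Ax_0\le An\le p-1<p$, so $\lfloor Ax_0/p\rfloor=0$.

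The crux is then that $f(I_i)\subseteq I_j$ forces $\lfloor Ax/p\rfloor$ to be constant on all of $I_i$, since a wrap would change the image class ($r-p\equiv -p\not\equiv 0 \pmod n$); in particular $\lfloor AM_i/p\rfloor=\lfloor Ax_0/p\rfloor=0$, that is $AM_i<p$. Because the $n$ consecutive integers $\{p-n,\dots,p-1\}$ meet every residue class mod $n$, the largest element of any class satisfies $M_i\ge p-n$, and $p>2n$ gives $p-n>p/2$; therefore $A<p/(p-n)<2$, forcing $A=1$ and $f(x)=x$.

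I expect the main obstacle to be obtaining a bound strong enough to reach the stated threshold $p>2n$ rather than a larger multiple of $n$. A crude estimate — bounding only the span $A(M_i-x_0)<p$ of the image progression and using $M_i-x_0\ge p-2n$ — yields $A=1$ merely for $p>4n$, and leaves the classes with very few elements untreated. The point that rescues the sharp constant is to avoid the span entirely: use $n\mid r$ to force $A\le(p-1)/n$ so that the floor vanishes at the small end, and then play the large end $M_i\ge p-n$ directly against $p$. Verifying that these two inputs combine cleanly, and that the sign/symmetry reduction of the all-wrap case is airtight, are the places where I would take the most care.
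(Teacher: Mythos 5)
Your proof is correct, but it takes a genuinely different route from the paper's. The paper's proof recycles the machinery of Theorem \ref{biggcd}: it bounds the counting function $M_{ij}=|\{x\in I_i: Ax \bmod p \in I_j\}|$ by the interval estimate \eqref{mij1}--\eqref{Mijubb} and compares it with $|I_i|$, which settles $p>9n$ analytically but then needs the computer search of Table \ref{tablei} to dispose of $2n<p<9n$ for $3\le n\le 12$, plus a separate hands-on argument (subtracting $f(i)$ from $f(i+n)$ to get $A\equiv v-u \pmod p$) for the two-element classes when $2n<p<3n$. You instead exploit the rigidity of the arithmetic-progression structure: the step $f(x+n)-f(x)\in\{r,r-p\}$ with $r=An\bmod p$ must be $\equiv 0\pmod n$ throughout $I_i$, the two congruences $r\equiv 0$ and $r\equiv p \pmod n$ are incompatible since $\gcd(n,p)=1$, the sign flip $f\mapsto p-f$ reduces the all-wrap case to $n\mid r$, and then $n\mid r$ together with $|A|<p/2$ pins $A$ to a positive integer at most $(p-1)/n$, so that the no-wrap condition at the top of the progression ($AM_i<p$ with $M_i\ge p-n>p/2$) forces $A=1$. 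This is uniform in $n\ge 2$, requires no computation and no special treatment of short classes, and reaches the sharp threshold $p>2n$ in one stroke; what the paper's approach buys in exchange is that its counting framework is the one that generalizes to $k\ne 1$ (it is literally the main term of Theorem \ref{biggcd}), whereas your step-difference argument is special to the linear case. All the delicate points check out: each class has at least one step since $p-1\ge 2n$, the extremes satisfy $x_0\le n$ and $M_i\ge p-n$ because any $n$ consecutive integers meet every class, and $A\equiv A'\pmod p$ with both representatives in $(-p/2,p/2)$ does force $A=A'$.
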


Similarly for the maps with  $k=(p+1)/2,$ but  not of the form  considered in Example \ref{ex1},
we can refine the bound in Theorem \ref{main}.
\comments{
\begin{theorem}\label{k=(p+1)/2}
Suppose that $f(x)\equiv Ax^{(p+1)/2}$ mod $p$ with $A\neq \pm 1$.

If $p>10^6$ and $p>4275 \:n^2 \log^4n$ then $f(x)$ is not a Type (iii) map.
\end{theorem}
}

\begin{theorem}\label{square}  Suppose that
$$ f(x)=Ax^{(p+1)/2} \text{ mod } p, \;\;\; A\neq \pm 1. $$
If $n\geq 2$ and $p>(4n+1)^2$ then $f(x)$ is not a Type (iii) map.
\end{theorem}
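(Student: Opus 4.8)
The plan is to use the identity $x^{(p+1)/2}\equiv x\left(\tfrac{x}{p}\right)\pmod p$. Writing $\chi=\left(\tfrac{\cdot}{p}\right)$ for the Legendre symbol and $\{t\}_p$ for the least positive residue, we then have $f(x)=\{Ax\}_p$ when $\chi(x)=1$ and $f(x)=\{-Ax\}_p=p-\{Ax\}_p$ when $\chi(x)=-1$. Since replacing $A$ by $-A$ sends $f$ to $p-f$, and hence the relation $f(I_i)\subseteq I_j$ to $f(I_i)\subseteq I_{\overline{j}}$ with $\overline{j}\equiv p-j\pmod n$, I may assume $2\le A<p/2$ (the value $A=1$ being excluded by hypothesis). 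Suppose for contradiction that $f(I_i)\subseteq I_j$ for some $i,j$.

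The first step is to recast the hypothesis as a statement about the staircase function $B(x):=\lfloor Ax/p\rfloor$. For $x\in I_i$ with $\chi(x)=1$, combining $f(x)\equiv j\pmod n$ with $f(x)=Ax-B(x)p$ and $x\equiv i\pmod n$ yields $B(x)\equiv(Ai-j)\overline{p}=:\phi_R\pmod n$, where $\overline{p}$ is the inverse of $p$ mod $n$ (available since $p$ is prime and $p>n$, so $\gcd(p,n)=1$). Likewise every $x\in I_i$ with $\chi(x)=-1$ satisfies $B(x)\equiv\phi_N\pmod n$ for a second constant $\phi_N$. Thus $B$, which increases monotonically from $0$ to $A-1$ as $x$ runs over $[1,p-1]$, is confined modulo $n$ to the two residues $\{\phi_R,\phi_N\}$ on all of $I_i$, and moreover its residue there determines the quadratic character of $x$.

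For $n\ge 3$ and $A\ge 3$ this confinement is already contradictory, and the argument needs only $p\gg n$. The block $B(x)=b$ is the interval $x\in[bp/A,(b+1)p/A)$ of length $p/A$. If $A\le p/n$, each block has length $\ge n$ and so meets $I_i$; hence $B$ attains every value $0,1,\dots,A-1$ on $I_i$, giving at least $\min(A,n)\ge 3$ residues mod $n$ and contradicting confinement to two. If instead $A>p/n$, then consecutive elements $x,x+n$ of $I_i$ satisfy $1\le B(x+n)-B(x)\le\lceil An/p\rceil\le n-1$ (using $A<p/2$), so along $I_i$ the integer $B$ climbs from near $0$ past $n$ in steps lying in $[1,n-1]$; such a sequence cannot keep all its terms in a two‑element residue set, since this would force the two possible step sizes $\lfloor An/p\rfloor$ and $\lceil An/p\rceil$ to be congruent mod $n$. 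This disposes of every $A\ge 3$ when $n\ge 3$.

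Two cases resist this counting and are where the hypothesis $p>(4n+1)^2$ is spent: $A=2$ with $n\ge 3$, and $n=2$ with arbitrary $A$. In each, $\{\phi_R,\phi_N\}$ already accounts for every attained block residue, so the staircase gives no contradiction by itself; instead one must exhibit two elements $x,x'\in I_i$ of equal character, $\chi(x)=\chi(x')$, lying in blocks whose indices differ by a non‑multiple of $n$. For $A=2$ this means producing a quadratic residue in $I_i\cap[1,p/2)$ together with one in $I_i\cap(p/2,p-1]$: then $\{Ax'\}_p-\{Ax\}_p\equiv -p\not\equiv 0\pmod n$, while equal character forces $\{Ax\}_p\equiv\{Ax'\}_p\pmod n$, the desired contradiction. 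I expect the construction of such an equal‑character pair straddling a block boundary to be the main obstacle: it amounts to finding a quadratic residue, and a nonresidue, of a prescribed class mod $n$ within a half (or a short window) of $[1,p-1]$. This is precisely what should force $p>(4n+1)^2$, located by combining the observation that every block has width exceeding $n$, hence contains a complete residue system mod $n$, with a Burgess‑type bound on the gaps between quadratic residues, the threshold $\sqrt p>4n+1$ ensuring that a residue of the correct class and character occurs on the correct side of the boundary.
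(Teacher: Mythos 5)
Your reformulation via the staircase $B(x)=\lfloor Ax/p\rfloor$ --- confined on $I_i$ to two residues $\phi_R,\phi_N$ mod $n$, with the residue determining $\left(\frac{x}{p}\right)$ --- is correct, and your treatment of the range $3\le A\le p/n$ (every block of length $p/A\ge n$ meets $I_i$, so $B$ takes at least $\min(A,n)\ge 3$ residues mod $n$) is essentially sound. The genuine gap is in the complementary range $A>p/n$. There you claim that a monotone integer sequence with consecutive differences in $\{s,s+1\}$, $s=\lfloor An/p\rfloor\ge 1$, cannot stay inside a two-element residue set mod $n$ because ``this would force the two possible step sizes to be congruent mod $n$.'' That implication is false. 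Writing $\Delta=\phi_N-\phi_R$, confinement only requires each step to be $\equiv 0$, $\Delta$ or $-\Delta\pmod n$, and both step sizes can be accommodated: for example if $s\equiv\Delta$ and $s+1\equiv-\Delta$ (i.e.\ $2\Delta\equiv-1$ mod $n$), every step of size $s$ is a residue-to-nonresidue transition and every step of size $s+1$ the reverse, so the quadratic character simply alternates along $i,i+n,i+2n,\dots$ and no contradiction appears; similarly $s\equiv 0$, $s+1\equiv\pm\Delta$ is consistent. This is precisely the dangerous regime of $A$ near $p/2$ (e.g.\ $A\equiv\pm2^{-1}$ mod $p$, where $An/p\approx n/2$). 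To close it you would need either a character-sum estimate showing $\left(\frac{\cdot}{p}\right)$ cannot follow such a prescribed pattern on an arithmetic progression, or the paper's device: replace $f$ by $\pm f^{-1}$ as needed so that the multiplier becomes $C=\min\{|A|,|A^{-1}|\}$, exclude $C$ from short intervals just below $p/2$, $p/4$ (and $p/6$ for $n=3$) by an elementary congruence computation, and then finish every remaining range of $C$ with the counting bound $M_{ij}\le(\lfloor(C-1)/n\rfloor+1)(\lfloor p/(Cn)\rfloor+1)$ applied to $|I_i|\le M_{ij}+M_{i\bar j}$. Your proposal has no substitute for this reduction, and without it the case $p/n<A<p/2$ is open.

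The two cases you correctly isolate as the ones consuming the hypothesis $p>(4n+1)^2$ ($A=2$, and $n=2$) are left as an expectation rather than an argument. The paper's Lemma \ref{awkward2} does exactly what you describe: it produces two elements of $I_i$ of the prescribed quadratic character on each side of the block boundary $p/C$ by applying Hummel's theorem that any interval of length exceeding $2\sqrt p$ contains at least two quadratic residues and two nonresidues, and $p>(2Cn+1)^2$ with $C=2$ is exactly $p>(4n+1)^2$. So your anticipated endgame is the right one, but as written the proposal proves the theorem only for $3\le A\le p/n$; the ranges $A>p/n$ and the cases $A=2$ and $n=2$ remain unproved.
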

\noindent

Theorems \ref{mainiiik=1}, \ref{square} and Example \ref{ex1}, are the cases where the integer
 $$L:=(p-1)/d $$
is 1 or 2.
When $L\geq 3$ is small the argument in Theorem \ref{biggcd} similarly shows that there are
no Type (iii) maps $f(x)=Ax^k$ mod $p$ once
$$p> 1214 \: n^2 (L-1)^2\: \log^4(n(L-1)). $$

At the other extreme,  for the Lehmer type maps,  $k=-1$, we have $d=2$ and  \eqref{asymptotic} certainly  gives an asymptotic formula, but
using  the  Kloosterman sum bound $2\sqrt{p}$ for \eqref{binsum}  drastically  improves the error term (see for example \cite{Alkan}).
More generally for small $|k|$, $k\neq 1$,  one can use the Weil \cite{Weil} bound $|k-1|\sqrt{p}$ to obtain (see also \cite{Igor2})
$$  |f(I_i)\cap I_j| =\frac{p}{n^2} +O(|k|p^{1/2}\log^2p). $$
Similarly when $|k|$ is small we can obtain good bounds for both Type (iii) and (iv).

\begin{theorem} \label{lehmer} Suppose that $f(x) = Ax^{k}$  mod $p$ with $k\neq 1$ positive or negative.

(a) If $p> 37|k-1|^2 n^2$ then $f(x)$ is not a Type (iii) map.

(b) If $p\geq 16.2 |k-1|^2 n^4$ then $f(x)$ is not a Type (iv) map.

\end{theorem}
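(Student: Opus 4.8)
The plan is to count, for each pair $(i,j)$, the quantity $N(i,j):=\abs{f(I_i)\cap I_j}$ and to show that it is neither $0$ nor $\abs{I_i}$. Since the images under $f$ distribute among $I_0,\dots,I_{n-1}$ with $\sum_j N(i,j)=\abs{I_i}$, the map fails to be of Type (iii) precisely when $N(i,j)<\abs{I_i}$ for all $i,j$ (so that no single class absorbs all of $f(I_i)$), and it fails to be of Type (iv) precisely when $N(i,j)>0$ for all $i,j$. Thus both parts reduce to one explicit evaluation of $N(i,j)$, compared against two thresholds: $\abs{I_i}\sim p/n$ for part (a) and the main term $p/n^2$ for part (b).

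To evaluate $N(i,j)$ I would detect the congruences $x\equiv i$ and $(Ax^k\bmod p)\equiv j$ modulo $n$ with additive characters mod $n$, writing
\[
N(i,j)=\frac{1}{n^2}\sum_{b,c=0}^{n-1} e_n(-ci-bj)\sum_{x=1}^{p-1} e_n(cx)\,e_n\bigl(b(Ax^k\bmod p)\bigr),
\]
where $e_n(t)=e^{2\pi i t/n}$. The term $b=c=0$ gives the main term $(p-1)/n^2$. Every other term involves the least residue $a_x:=Ax^k\bmod p$; expanding $e_n(ba_x)$ (and, if needed, $e_n(cx)$) into additive characters $e_p(ha_x)$ mod $p$ replaces $e_p(ha_x)$ by $e_p(hAx^k)$ and leaves exactly the binomial exponential sums $\sum_{x=1}^{p-1}e_p(\alpha x^k+\beta x)$ of \eqref{binsum}. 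For frequencies with $\alpha\not\equiv0$ I would invoke the Weil bound $\abs{k-1}\sqrt{p}$ quoted before the theorem, using the trivial values $-1$ or $p-1$ when $\alpha\equiv0$. The goal of this step is an estimate of the shape
\[
\Bigl\lvert N(i,j)-\frac{p}{n^2}\Bigr\rvert \le c_0\,\abs{k-1}\sqrt{p}
\]
with an explicit, $\log$-free constant $c_0$. Granting this, the two parts follow by arithmetic. For part (a), $N(i,j)<\abs{I_i}$ holds once $c_0\abs{k-1}\sqrt{p}<\abs{I_i}-p/n^2$; since $\abs{I_i}\ge(p-1)/n-1$ and $n^2/(n-1)\le 2n$ for $n\ge2$, this is guaranteed once $p\gg\abs{k-1}^2n^2$, and tracking constants yields $37$. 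For part (b), $N(i,j)>0$ holds once $c_0\abs{k-1}\sqrt{p}<p/n^2$, i.e. once $p\gg\abs{k-1}^2n^4$, giving $16.2$; the extra two powers of $n$ reflect that here the error must beat the smaller quantity $p/n^2$ rather than $\abs{I_i}$.

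The main obstacle is producing the error term with an explicit constant and, crucially, \emph{without} the factor $\log^2 p$ that a naive double completion creates (as in the crude formula $p/n^2+O(\abs{k}p^{1/2}\log^2 p)$ recorded just before the theorem). A useful structural reformulation, which also explains why the binomial sums are the right object, is that (for $A>0$, $k\ge1$) $f(I_i)\subseteq I_j$ holds if and only if $\lfloor Ax^k/p\rfloor$ is constant modulo $n$ as $x$ runs through $I_i$, since $f(x)=Ax^k-p\lfloor Ax^k/p\rfloor$ and $Ax^k\bmod n$ is already fixed on $I_i$. For $k=1$ the increments $\lfloor A(x+n)/p\rfloor-\lfloor Ax/p\rfloor$ lie strictly between $0$ and $n$ because $\abs{A}<p/2$, so they must all vanish, forcing $A=\pm1$; this recovers Theorem \ref{mainiiik=1} with no exponential sums at all. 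For $\abs{k}\ge2$ the increments grow with $x$, and one genuinely needs the equidistribution of $\lfloor Ax^k/p\rfloor\bmod n$ supplied by the Weil bound. Controlling this with clean constants, together with the bookkeeping for $A<0$ and for negative exponents $k$ (where $\sum_x e_p(\alpha x^k+\beta x)$ becomes a rational-function sum still bounded by $\abs{k-1}\sqrt{p}$), is where the real work lies.
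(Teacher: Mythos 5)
Your overall strategy (estimate $N(i,j)=\abs{f(I_i)\cap I_j}$ and compare it with $\abs{I_i}$ for part (a) and with $0$ for part (b)) has the right shape, and your arithmetic correctly predicts the exponents $n^2$ and $n^4$. But there is a genuine gap exactly at the point you flag as ``the main obstacle'': you need $\bigl|N(i,j)-p/n^2\bigr|\le c_0\abs{k-1}\sqrt p$ with a log-free absolute constant $c_0$, and the route you propose (detecting the class of the least residue $Ax^k\bmod p$ by a double Fourier completion and then applying the Weil bound termwise) cannot produce it. The $L^1$ norm of the Fourier coefficients of the characteristic function of an interval, or of $I_j$ viewed inside $\mathbb Z_p$, is genuinely of order $\log p$, so bounding each completed binomial sum \eqref{binsum} by $\abs{k-1}\sqrt p$ and summing absolute values over the two frequency variables inevitably yields an error of size $\abs{k-1}\sqrt p\log^2 p$ --- this is precisely the asymptotic $p/n^2+O(\abs{k}p^{1/2}\log^2p)$ that the paper records just before the theorem, and no cancellation among the completed sums is known that would strip the logarithms from the exact count. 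Declaring the log-free estimate as a ``goal'' and then ``granting'' it is not bookkeeping; it is the crux, and it is not delivered by your method.

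The paper sidesteps this by not computing $N(i,j)$ at all. In the proof of Theorem \ref{smallgcd2} the characteristic function of $I_j$ (respectively of the complement $I_j^c$, for the Type (iii) statement) is replaced by a convolution $\alpha_j=\mathscr I_{J_j}*\mathscr I_{K_j}$ of characteristic functions of two sub-progressions with $J_j+K_j\subseteq I_j$ and $\abs{J_j},\abs{K_j}\ge N_j/2$; then $\alpha_j$ is supported on the target set, and by Cauchy--Schwarz and Parseval its Fourier coefficients have $L^1$ norm at most $\abs{J_j}^{1/2}\abs{K_j}^{1/2}$ --- no logarithm. Positivity of the weighted count $\sum_x\alpha_i(x)\alpha_j(Ax^k)$ then requires only $\abs{J_i}\abs{K_i}\abs{J_j}\abs{K_j}>\abs{k-1}^2p^3$ once the Weil bound replaces \eqref{binbound} in \eqref{thm3.1typeiii} and \eqref{thm3.1typeiv}, and the constants $37$ and $16.2$ fall out of $\abs{J_j'}\abs{K_j'}\gtrsim(p/3)^2$ for part (a) versus $\abs{J_j}\abs{K_j}\gtrsim(p/2n)^2$ for part (b). This minorant device (or an equivalent Fej\'er-type smoothing) is the ingredient your argument is missing; your closing observation that $k=1$ reduces to the constancy of $\lfloor Ax/p\rfloor$ is correct and recovers Theorem \ref{mainiiik=1}, but it does not help for $\abs{k}\ge2$.
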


If  we want a stronger statement avoiding cases of  Type (iv) even when $d$ is large, that is, prove that the image of every residue class mod $n$  hits every residue class mod $n,$ then we will need to exclude more examples for $n>2$.  We explore this problem in \cite{GKII}.  The proofs of Theorems \ref{main}, \ref{mainiiik=1} and \ref{lehmer} are given in Section \ref{proofmain} and Theorem \ref{square} in  Section \ref{Squaresection}.

For a given $n$ we know from Theorem \ref{main} that there are at most finitely many occurrences of Type (i), (iia) and (iib),  but of course the bounds in this paper are far too large to
obtain a complete determination as was done for $n=2$ in \cite{CochKony}. We hope to employ the methods of \cite{CochKony} to complete this determination in a subsequent work.

\section{Computations and Conjectures}\label{computations}

Computations looking for maps of Type (i)-(iv) were performed for the primes $p<20,000$, exponents $k<p-1$ and  moduli $n=3$ through 12.  Of particular interest was obtaining Type (iii) examples with the ratio $p/n$ as large as possible. This led to a more extensive investigation of the exponent $k=(p+1)/2$. We quickly discovered Example \ref{ex1} where for any odd $n$ and prime $p \equiv 1 $ mod 4, the mapping $f(x)= \pm x^{(p+1)/2}$ mod $p$ is Type (iii).  Further families with this exponent are given in Theorems \ref{2xp2} to \ref{3xp2}. They were all discovered by looking at patterns in the data.  Notice that a Type (iii) map of the form $f(x)=\pm x^{(p+1)/2}$ mod $p$  must produce a Type (iib) map  for $f(x)$ or $-f(x)$; of course we are only interested  maps of this type  for $n$ or even, or for $n$ odd where the $f(I_i)=I_i$ has  $2i\not\equiv  p$ mod $n$.

\subsection{Type (iii) mappings:}
In   Theorem \ref{main} we verified the existence of a constant $K(n)$ such that for $p>K(n)$ and $f(x)\neq \pm x$ and   when $n$ is odd $f(x)\neq \pm x^{(p+1)/2}$ mod $p$, every residue class is mapped to at least two different  residue classes, that is, $f(x)$ is not Type (iii).  The constant  $K(n)= 9\cdot 10^{34} n^{92/3}$  obtained there is undoubtedly far from the truth. Table \ref{tablei} gives the five largest primes having an
$f(x)=Ax^k$ mod $p$ with  $f(I_i)\subseteq I_j$  for some $i,j$,  found for each $3\leq n\leq 12$ and $2n<p<20,000$.
Since  $Ax^k$  has this property if and only if  $-Ax^k$ does, we just consider  positive $A$. From this data we make the following conjecture.

\begin{conjecture} The optimal values for $K(n)$ for $n=3$ through $12$ are
\begin{align*} &  K(3)=  17, \;\; K(4)=13,\;\; K(5)=43,\;\; K(6)=17,\;\; K(7)=37,\\
 &  K(8)=43,\;\; K(9)=43,\;\; K(10)=47,\;\; K(11)=67,\;\; K(12)=53. \end{align*}
The data suggests that one can take $K(n)= 2n^2$, although the correct bound is likely of order  somewhere between $n \log n \log \log n$ and $n^2$.
\end{conjecture}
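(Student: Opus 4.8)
The conjecture has two ingredients: the exact thresholds $K(n)$ for $3\le n\le 12$, and the claimed order of growth (the working value $2n^2$, and a window running from $n\log n\log\log n$ up to $n^2$). I would treat these separately. For the exact values the lower half is already in hand: the computational data in Table \ref{tablei} exhibits a Type (iii) map $Ax^k$ mod $p$ at the prime $p=K(n)$ claimed in each case, so no threshold below that value can work and the optimal $K(n)$ is at least as large as stated. The genuine content is the matching upper bound, namely that \emph{no} prime $p>K(n)$ (outside the excluded families $\pm x$ and, for odd $n$, $\pm x^{(p+1)/2}$ mod $p$) admits a Type (iii) map. In principle this follows by combining Theorem \ref{main}, which rules out such maps once $p\ge 9\cdot10^{34}n^{92/3}$, with an exhaustive search over the remaining window $K(n)<p<9\cdot10^{34}n^{92/3}$. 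The fatal obstacle is immediate: that window is astronomically large, so the search is infeasible and the exact values cannot be certified without first shrinking the constant in Theorem \ref{main} by many orders of magnitude.

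Hence the crux of both halves of the conjecture is to drive the threshold in Theorem \ref{main} down from $n^{92/3}$ to $n^2$. The mechanism is visible in the asymptotic \eqref{asymptotic}: the main term $p/n^2$ overwhelms the error $O(d\log^2p)+O(p^{89/92}\log^2p)$, thereby forcing every residue class to receive values and killing Type (iii), precisely once $p/n^2$ exceeds those error terms. The exponent $89/92$ traces back to the available uniform-in-$k$ bound for the binomial sum \eqref{binsum}; to reach $p\gg n^2$ one would need essentially square-root cancellation, i.e.\ a replacement of $p^{89/92}$ by $p^{1/2+\ve}$ valid uniformly in the exponent $k$ with $\gcd(k,p-1)=1$. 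This is exactly the hard open problem in the theory of sparse (binomial) exponential sums: the Weil bound $\abs{k-1}\sqrt p$ is superb for small $k$ (and is what powers Theorem \ref{lehmer}), but is useless when $k$ is of size comparable to $p$, which is the regime where the large-$d$ examples live.

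For large $d$ the asymptotic \eqref{asymptotic} is genuinely false, so a separate argument is needed, organized by $L=(p-1)/d$. The values $L=1,2$ are precisely the excluded maps and the sharp Theorems \ref{mainiiik=1} and \ref{square}; for each fixed small $L\ge3$ the bound $p>1214\,n^2(L-1)^2\log^4(n(L-1))$ quoted after Theorem \ref{square} already has the desired shape $n^2$ up to logarithms. The plan would therefore be to interpolate, using the small-$d$ (equivalently large-$L$) asymptotic on one range and the explicit small-$L$ estimate on the other, and arranging the two to overlap. The obstruction is that the asymptotic requires $L$ to grow faster than $\log^2p$, while the small-$L$ bound degrades like $(L-1)^2$; closing the middle band uniformly is exactly where the $n^{92/3}$ loss in Theorem \ref{main} currently originates, and removing it is the heart of the matter.

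Finally, for the lower end of the stated window I would seek explicit constructions of Type (iii) maps persisting up to $p$ of order $n\log n\log\log n$. The natural candidates are further $x^{(p+1)/2}$ families of the kind developed in Theorems \ref{2xp2}--\ref{3xp2}, whose Type (iii) behaviour reduces to controlling how the quadratic-residue pattern drops an entire class $I_i$ into a single class mod $n$; the relevant threshold is then governed by gaps between consecutive quadratic residues, which is precisely the point where the remark after Example \ref{ex1} invokes the Burgess bound to obtain $p\gg(n\log n)^{4/3}$. Sharpening such gap estimates, or exhibiting arithmetically special primes $p$ carrying long runs of residues in a prescribed class mod $n$, is what would supply the $\log\log n$ refinement. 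I expect this extremal step, together with the uniform square-root cancellation for \eqref{binsum}, to be the principal obstacle to settling the conjecture.
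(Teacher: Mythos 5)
This statement is a conjecture, and the paper offers no proof of it: the exact values of $K(n)$ rest on the exhaustive search reported in Table \ref{tablei} (which supplies the lower bounds, exactly as you say, and only empirical evidence for the matching upper bounds), while the growth window is supported by Theorem \ref{main} on one side and the constructions of Theorems \ref{2xp2}--\ref{3xp2} on the other. Your assessment of the obstacles --- the infeasible gap up to $9\cdot 10^{34}n^{92/3}$, the need for square-root cancellation in \eqref{binsum} uniformly in $k$, and the unresolved middle band of $L$ between the explicit small-$L$ estimates and the large-$L$ asymptotic --- matches the paper's own framing and is accurate. One small correction: the Burgess remark after Example \ref{ex1} points in the opposite direction (it bounds how large $p$ can be while the exceptional behaviour persists); the $n\log n\log\log n$ lower end of the window instead comes from the Graham--Ringrose and Montgomery \emph{lower} bounds on the least quadratic nonresidue, which the paper invokes after Theorem \ref{1xp2} to produce Type (iib) examples with $p\gg n\log n\log\log\log n$ unconditionally and $p\gg n\log n\log\log n$ under GRH.
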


\begin{table}[tbp]

\parbox{.3\linewidth}{
\footnotesize\begin{tabular}{|cccc|}\hline
  $p$ & $A$ & $ k$ & $i$ \\\hline
& & $n=3$ &  \\
7 & 3 & 5 & 0,1,2* \\
 11 & 4  & 9 & 1* \\
 13 & 3 & 5,11 & 2*  \\
17 & 4 & 5,13 & 1*\\\hline
 & & $n=4$ & \\
11 & 1 & 9 & 0,1*,2*,3\\
13 & 2 & 5 & 0,1,2,3 \\\hline
 & & $n=5$ & \\
19 & 5 & 17 & 2*\\
23 & 10 & 21  & 4* \\
29 & 14 & 13 & 0,4 \\
31 & 1 & 11 & 3* \\
43 & 6 & 29 & 4*  \\\hline
 & & $n=6$  & \\
13 & 1 & 5,11 & 3*,4* \\
13 & 1 & 7 & 2,3,4,5 \\
13 & 3 & 5 & 2,5  \\
13& 3 & 11  & 2*,5* \\
13 & 6 & 11 & 0,1 \\
17 & 1 & 9 & 0,5 \\
17 & 2 & 5 & 2,3 \\
17 & 4 & 7 & 0,5 \\
17 & 4 & 15 & 0*,5* \\
17 & 8  & 13  &  1,4\\ \hline
 & & $n=7$ & \\
19 & 2 & 17 & 6* \\
19 & 3 & 7 & 0,5 \\
19 & 3 & 11 & 6* \\
19 & 3 & 17 & 0*,5* \\
19 & 5 & 5 & 6* \\
19 &  6 & 7,11& 0,5\\
19 &  7 & 7 & 6* \\
19 & 7 & 11 & 0,5\\
19 & 8 & 13 & 6* \\
23 & 8 & 21 & 1* \\
23 & 9 & 21 & 3*,6* \\
29 & 14 & 13,27 & 4* \\
31 & 2 & 29 & 5* \\
37 & 16 & 17 & 4*,5* \\ \hline
\end{tabular}
}
\hfill
\parbox{.34\linewidth}{
\footnotesize\begin{tabular}{|cccc|}\hline
$p$ & $A$ & $ k$ & $i$ \\\hline
 & & $n=8$ & \\
23 & 2 & 3 & 0,7 \\
23 & 3,10 & 5 & 0,7\\
23 & 6,11 &  17 & 0,7 \\
23 & 1 & 19 & 0,7\\
23 & 10 & 21 & 0,7 \\
29 & 1 & 15 &  2,3 \\
29 & 7& 19 & 6,7  \\
31 & 5 & 11 & 3*,4*  \\
41 & 1 & 21 & 3,6 \\
43 & 2 & 13 & 0,3\\\hline
 & & $n=9$ & \\
29 & 5 & 9,23 & 4,7\\
29 & 9 & 11,25 & 4,7\\
29 & 10 & 13,27 & 1*\\
31 & 9 & 29 & 2*\\
37 & 4 & 17,35 & 5*\\
41 & 1 & 19 & 1*,4*\\
41 & 3 & 11,31 & 7*\\
41 & 4 & 13,33 & 7*\\
41 & 10 & 9,29 & 7*\\
41 & 11 & 19,39 & 7*\\
41 & 12 & 3,23 & 7*\\
41 & 13 & 7,27 & 7*\\
41 & 18 & 17,37 & 7*\\
43 & 7 & 41 & 8*\\\hline
   & & $n=10$ & \\
31 & 1 &  11 &  3*,5*,6*,8*\\
37 & 8  & 7  & 8,9\\
37 &14 &31 &0,7\\
41 & 18 & 19& 2,9\\
41 &2 & 21  & 3,8\\
41 & 20 & 21 &  5,6\\
43 &6 &29 & 4*,9*\\
47  &11 & 17 & 8,9\\ \hline
\end{tabular}
\vspace{14ex}
}
\hfill
\parbox{.28\linewidth}{
\footnotesize\begin{tabular}{|cccc|}\hline
  $p$ & $A$ & $ k$ & $i$ \\\hline
 &  & $n=11$ & \\
41 & 6  &3,23  & 4*\\
41 & 17 & 7,27  & 4*\\
41  & 16 & 9,29 & 4*\\
41 & 14 & 11,31 &4*\\
41 & 18 &13,33 &4*\\
41 & 10 &17,37 &4*\\
41 & 19  &19,39 &4*\\
41 & 18 & 29 & 1,7\\
43 & 18 &23 &0,10\\
43 & 6  & 41 &5*\\
47 &15 & 45& 7*\\
53 &2 & 25 &10*\\
53 &2  &51 &10\\
67 &29 &23 &6*\\ \hline
 & & $n=12$ & \\
31 &5 & 7 & 0,7\\
31 &7,14 &7 &8,11\\
31 &5& 11 &0*,7*\\
31 &6& 11 & 1,6,9,10\\
31 &10& 13& 0,7\\
31 &6 & 17 & 0,7\\
31 &8 &19 &0*,7*\\
31 &15 & 19& 9*,10*\\
31 &4,8 &23 & 8,11\\
31 &12& 23& 0,7\\
31 &3 &29 &9*,10*\\
31 &5 &29 &8*,11*\\
31 &9 & 29 &0*,7*\\
37 &1 &19 &4,5,8,9\\
41 &9 &3,13,23,33 &8,9\\
41 &1 & 11,21& 8,9\\
41 &20& 19,29 & 7,10\\
41 &1 &31 &7*,8,9,10*\\
43 &12 & 37 & 9,10\\
53 &1 &  27 &1,4\\ \hline
\end{tabular}

\vspace{11.5ex}}

\vspace{2ex}

\caption{Type (iii): Five  largest  primes $2n<p<20,000$  having an $f(x)=Ax^k$ mod $p$ with $f(I_i)\subseteq I_j$  for some $i,j$
 ($f(x)\neq x$ if $n$ is even, and  $f(x)\neq x$ or $x^{(p+1)/2}$ if  $n$ is odd). \newline
Type  (iib): Cases of  $f(I_i)=I_i$,   are marked with a *.}
\label{tablei}
\end{table}

Looking for larger ratios of $p$ to $n,$ we extended our computations to $13\leq n \leq 86$ and  $5n\leq p\leq 15n$. The  values 
found with  $p/n>9$ are recorded in Table \ref{tablento70}. 

\begin{table}[tbp]

\footnotesize\begin{tabular}{|cccccc|}\hline
$n$  & $p$ & $A$ & $k$  & $i$ &  $p/n$ \\ \hline
70 & 641 & 1 & 321 & 27,54 &  $9.157142\ldots$\\
84 & 773 & 1 & 387 & 27,37,64,74 & $9.202380\ldots$\\
30 & 277 & 1 & 139 & 10,27 & $9.233333\ldots$\\
39 & 367 & 1 & 245 & 8 & $9.410256\ldots$\\
62 & 593 & 1 & 297 & 16,19 & $9.564516\ldots$\\
82 & 809 & 1 & 405 & 20,51 & $9.865853\ldots$\\
60 & 593 & 1 & 297 & 21,32 & $9.883333\ldots$\\
37 & 367 & 84 & 245 & 17 & $9.918918\ldots$\\
85 & 853 & 221 & 143,569 & 44 & $10.035294\ldots$\\
83 & 853 & 220 & 143,569 & 53 & $10.277108\ldots$\\
35 & 367 & 83 & 245 & 26 & $10.485714\ldots$\\
81 & 853 & 220 & 143,569 & 62 & $10.530864\ldots$\\
76 & 809 & 1 & 405 & 58,67 & $10.644736\ldots$\\
79 & 853 & 221 & 143,569 & 71 & $10.797468\ldots$\\
86  & 941 &  1 & 471 &  83,84 &  $10.941860\ldots$\\
84 & 977 & 1 & 489 & 12,41 & $11.630952\ldots$\\ 
86  & 1013 & 1  & 507 & 2,65 &  $11.779069\ldots$ \\ \hline
\comments{30 & 277 & 1 & 139 & 27 & $9.2333$\\
39 & 367 & 1 & 245 & 8 & $9.4103$\\
62 & 593 & 1 & 297 & 16,19 & $9.5645$\\
82 & 809 & 1 & 405 & 20,51 & $9.8659$\\
60 & 593 & 1 & 297 & 21,32 & $9.8833$\\
37 & 367 & 84 & 245 & 17 & $9.9189$\\
83 & 853 & 220 & 143, 569 & 53 & $10.2771$\\
35 & 367 & 83 & 245 & 26 & $10.4857$\\
81 & 853 & 220 & 143, 569 & 62 & $10.5309$\\
76 & 809 & 1 & 405 & 58, 67 & $10.6447$\\
79 & 853 & 221 & 143, 569 & 71 & $10.7975$\\
84 & 977 & 1 & 489 & 12, 41 & $11.6310$\\  \hline
52 & 457 & 1 &  229  & 12 &  8.78846153846154\\
52 & 457 &  1 &  229 & 29  & 8.78846153846154\\
59 & 521 &  5 & 209  & 54 &  8.83050847457627\\
59 & 521 &5 &  469 & 54 & 8.83050847457627\\
64 &569 & 1  &285 & 19 & 8.890625\\
64 & 569 &1 & 285 & 38& 8.890625\\
64 & 569 & 1  & 285  & 49 &8.890625\\
64  & 569 & 1& 285 &8  & 8.890625\\
41 &367 & 1& 245 & 40 & 8.95121951219512\\
70 &  641 & 1 &321 &27  & 9.15714285714286\\
70 &  641 & 1 & 321 & 54  & 9.15714285714286\\
30 &  277 &  1&139  & 10  & 9.23333333333333\\
30 &277 & 1 &139 & 27  & 9.23333333333333\\
39 & 367 & 1 & 245  & 8  &9.41025641025641\\
62 & 593 & 1  &297 & 16 &9.56451612903226\\
62 &593 &1 &297 &19 &9.56451612903226\\
60 &593 &1 & 297 &21 & 9.88333333333333\\
60 &593 &1 & 297 &32 &9.88333333333333\\
37 &367 &84 & 245 &17 & 9.91891891891892\\
35 &367 &83 & 245 &26 & 10.4857142857143\\ \hline}
\end{tabular}
\vspace{1ex}
\caption{Type (iii) with   $3\leq n\leq 86$  and $9< p/n<15$\newline
($f(x)=Ax^k$ mod $p$ with $A>0$, and  $f(x)\neq x^{(p+1)/2}$  if  $n$ is odd).}
\label{tablento70}
\end{table}

A large number of the Type (iii) maps with $p/n$ large have $k=(p+1)/2$.
 The bounds in Example \ref{ex1} and Theorem \ref{square} enable 
a complete determination when  $k=(p+1)/2$ and $p>2n$ for small $n$. There are no such Type (iii) mappings for $n=3,4,7$ or $9,$ 
with  a complete list of such  maps for the remaining $5\leq n\leq 12$ shown in Table \ref{tablek=(p+1)/2smalln}.

\begin{table}[tbp]

\parbox{.323\linewidth}{
\footnotesize\begin{tabular}{|cccc|}\hline
 & $p$ & $A$ & $i$  \\ \hline
 $n=5$ & 13 & 4,5 & 3,5    \\\hline
 $n=6$ & 13 &  1 & 2,3,4,5  \\
    & 17 & 1 & 5,6 \\ \hline
$n=8$ & 17 & 1 & 1,3,6,8 \\
 & 29 & 1 & 2,3 \\
 & 41 & 1 & 3,6 \\ \hline
\end{tabular}}
\hfill
\parbox{.325\linewidth}{
\footnotesize\begin{tabular}{|cccc|}\hline
 & $p$ & $A$ & $i$  \\ \hline
$n=10$ & 29 & 2 & 2,7 \\
  & 29 & 9,10 & 9,10\\
  & 29 & 14 & 4,5 \\
 & 41 & 2 &  3,8\\
  & 41 & 20 & 5,6 \\\hline
\end{tabular}
\vspace{2ex}}
\hfill
\parbox{.323\linewidth}{
\footnotesize\begin{tabular}{|cccc|}\hline
 & $p$ & $A$ & $i$  \\ \hline
 $n=11$ & 29 & 10 & 7,11 \\\hline
$n=12$ & 29 & 1 & 1,2,3,4 \\
   & 29 & 12 & 8,9 \\
   & 37 & 1 & 4,5,8,9 \\
 & 41 & 1 & 8,9 \\
  & 53 & 1 & 1,4 \\\hline
\end{tabular}}

\vspace{1ex}
\caption{All  Type (iii) of the form $f(x)=Ax^{(p+1)/2}$ mod $p$ for $3\leq n\leq 12$
 (for $A>0$ and excluding $f(x)=x^{(p+1)/2}$ if  $n$ is odd).}
\label{tablek=(p+1)/2smalln}
\end{table}

In the proof of Theorem \ref{square} for $k=(p+1)/2$ we had to deal separately with  the case $A=2$, so additional computations were performed
for $f(x)=2x^{(p+1)/2}$ mod $p$ looking for examples with large ratio $p/n$.  These corresponded to primes with a certain 
pattern of quadratic residues.  Examining the corresponding $n$ values led us to a family 
of Type (iii) mappings of the form $2x^{(p+1)/2}$, with arbitrarily large $p/n$, and requiring $K(n)$ to be as large as $n\log n$. 

\begin{theorem} \label{2xp2} Let
$$   f(x)=2x^{(p+1)/2} \text{ mod } p. $$
Suppose that $p\equiv 1$ mod $4$  has
\be \label{A=2property}  \left(\frac{p}{q} \right) =\begin{cases}  +1, & \text{ if $q=1$ mod $4$,} \\  -1, & \text{ if $q=3$ mod $4$,} \\      \end{cases}\ee
for all primes $3\leq q \leq 4t-1$,
 and that $n\equiv 2$ mod $4$ with
\be \label{nrange} \frac{2p}{4t+1} \leq n < \frac{2p}{4t-1}.\ee
Then  for both
\be \label{defi}  i:=\frac{1}{4}(2p-(4t-1)n ), \;\;\;\;\;  j:=\begin{cases} 2i \text{ mod } n, & \text{ if $\left(\frac{n}{p}\right)=-1$,}\\ p-2i \text{ mod } n, & \text{ if $\left(\frac{n}{p}\right)=1$,}
\end{cases} \ee
and
\be \label{defi2}  i:=\frac{1}{4}(2p-(4t-3)n ), \;\;\;\;\;  j:=\begin{cases} p-2i \text{ mod } n, & \text{ if $\left(\frac{n}{p}\right)=-1$,}\\ 2i \text{ mod } n, & \text{ if $\left(\frac{n}{p}\right)=1$,} \end{cases}
\ee we have $f(I_i)\subseteq I_j$.
\end{theorem}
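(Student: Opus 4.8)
The plan is to compute $f(x)\bmod n$ directly and show that as $x$ runs through $I_i$ it takes only one value. By Euler's criterion $x^{(p-1)/2}\equiv\left(\frac{x}{p}\right)\pmod p$, so $f(x)\equiv 2\left(\frac{x}{p}\right)x\pmod p$, and $f(x)$ is the least positive residue of $\pm 2x$ according to the quadratic character of $x$. Reducing $\pm 2x$ into $\{1,\dots,p-1\}$ shifts by $p$ exactly when $2x>p$, and this shift changes the residue mod $n$. Thus there are four cases, indexed by $\left(\frac{x}{p}\right)=\pm1$ and by whether $x$ lies in the lower half $x\le(p-1)/2$ or the upper half $x\ge(p+1)/2$, giving $f(x)$ congruent mod $n$ to one of $2i$, $2i-p$, $p-2i$, or $2p-2i$. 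Using the defining identity $2p=4i+(4t-1)n$, a one-line reduction shows $2p-2i\equiv 2i$ and $2i-p\equiv p-2i\equiv n/2\pmod n$, so the four values collapse to the two classes $2i$ and $n/2\equiv p-2i\pmod n$. It then suffices to show that the two cases which actually occur both yield the same one of these.

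The heart of the argument is to pin down $\left(\frac{x}{p}\right)$ on each half. Writing $x=i+\ell n$ and again invoking $2p=4i+(4t-1)n$, I would multiply by $4$ to get $4x\equiv(4\ell-4t+1)n\pmod p$, whence, since $4$ is a square,
\be \left(\frac{x}{p}\right)=\left(\frac{4\ell-4t+1}{p}\right)\left(\frac{n}{p}\right). \ee
The range hypothesis \eqref{nrange} forces $x\in\{1,\dots,p-1\}$ to correspond to $0\le\ell\le 2t-1$, so the odd integer $u:=4\ell-4t+1\equiv1\pmod4$ satisfies $\abs{u}\le 4t-1$; moreover $2x<p$ is equivalent to $\ell\le t-1$, i.e.\ to $u<0$. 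Next I would evaluate $\left(\frac{u}{p}\right)$. Since $p\equiv1\pmod4$, quadratic reciprocity gives $\left(\frac{q}{p}\right)=\left(\frac{p}{q}\right)$ for odd primes $q$, so \eqref{A=2property} reads $\left(\frac{q}{p}\right)=(-1)^{(q-1)/2}$ for every odd prime $q\le 4t-1$. By multiplicativity this extends to $\left(\frac{v}{p}\right)=(-1)^{(v-1)/2}=\left(\frac{-1}{v}\right)$ for every odd $v$ with $1\le v\le 4t-1$. Applying this with $v=\abs{u}$ and using $\left(\frac{-1}{p}\right)=1$ shows $\left(\frac{u}{p}\right)$ is simply the sign of $u$.

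Combined with the half-test $2x<p\iff u<0$, this yields $\left(\frac{x}{p}\right)=-\left(\frac{n}{p}\right)$ throughout the lower half and $\left(\frac{x}{p}\right)=\left(\frac{n}{p}\right)$ throughout the upper half. Hence when $\left(\frac{n}{p}\right)=-1$ the residues of $I_i$ all lie below $p/2$ and the nonresidues above, so only the QR-lower and NQR-upper cases occur, both giving $f(I_i)\subseteq I_{2i}$; when $\left(\frac{n}{p}\right)=1$ the roles reverse and only the NQR-lower and QR-upper cases occur, both giving $f(I_i)\subseteq I_{p-2i}$. This is exactly \eqref{defi}. The second family is handled identically from $2p=4i+(4t-3)n$: now $u=4\ell-4t+3\equiv3\pmod4$, so $\left(\frac{u}{p}\right)$ is the \emph{negative} of the sign of $u$, which flips the assignment of residues to halves and thereby swaps the two cases on $\left(\frac{n}{p}\right)$, reproducing \eqref{defi2}.

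The main obstacle is the character evaluation together with the bookkeeping that keeps it valid: everything hinges on guaranteeing that every $v=\abs{u}$ that arises is an odd integer at most $4t-1$, so that \eqref{A=2property} genuinely applies. Getting the endpoints of the $\ell$-range exactly right—so that the extremes $\ell=0$ and $\ell=2t-1$ produce $\abs{u}\le 4t-1$ but never $4t+1$—is where the precise two-sided bound \eqref{nrange} is essential and is the delicate step. The congruence $n\equiv2\pmod4$ enters to make $i$ an integer and to make $n/2$ odd, which is what underlies the collapse of the four residues into the two classes above.
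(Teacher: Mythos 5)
Your proposal is correct and follows essentially the same route as the paper's proof: reduce $f$ to $\pm 2x \bmod p$, write $x=i+\ell n$ with $0\le\ell\le 2t-1$, evaluate $\left(\frac{x}{p}\right)=\left(\frac{n}{p}\right)\left(\frac{4\ell-4t+1}{p}\right)$ via the hypothesis \eqref{A=2property} extended multiplicatively to all odd $m\le 4t-1$, match the sign of the character to whether $2x$ lies below or above $p$, and collapse the resulting values using $4i\equiv 2p\bmod n$. Your repackaging of the character evaluation as ``$\left(\frac{u}{p}\right)$ equals the sign of $u$'' and your identification of the $\ell$-range bookkeeping as the delicate step are both accurate and consistent with the paper's argument.
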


The primes $p<100,000$ with  property \eqref{A=2property} with $t\geq 9$, and the smallest $n$ this gives in Theorem \ref{2xp2}  are shown in Table \ref{TableA=2}
\begin{table}[tbp]
\small\begin{tabular}{|ccccc|}\hline
 &  $p$  & $t$ & $n$ & $ p/n$ \\\hline
&15461 & 9  & 838 &  18.449880\ldots \\
&23201  & 9  & 1258 & 18.442766\ldots  \\
Theorem \ref{2xp2} & 40169 & 9 &  2174 &  18.477000\ldots \\
&70769 & 10  &  3454  & 20.488998\ldots\\
 &75869 & 9 &  4102  & 18.495611\ldots  \\ \hline
\end{tabular}

\vspace{2ex}
\caption{Primes $p<100,000$ with $p\equiv 1$ mod $4$ and $\left(\frac{-p}{q}\right)=1$ for all odd $q\leq 4t-1$ for some  $t\geq 9$.}
\label{TableA=2}

\end{table}

 Using the Chinese remainder we can construct $p$ with this property for arbitrarily large $t$. For example we could  take $p\equiv 1$ mod $4Q_1$ and $-1$ mod $Q_2$ where $Q_1$ and $Q_2$ are the products of the primes $q\leq 4t-1$ that are $1$ or $-1$ mod $4$ respectively (there are of course many other ways).  Hence we can make Type (iii)
examples with $p>(2t+\frac{1}{2}-\ve)n$. In particular we can't take $K(n)=Cn$ however large the $C$. Moreover, by the work of
Heath-Brown \cite{hb} and Xylouris \cite{xylouris} on the smallest prime in an arithmetic progression, there exist such $p$ with $p\ll  Q^{5.18}$, with $Q=4Q_1Q_2$,
and hence examples of Type (iii) with $p> \frac 1{11} n \log n$.  Assuming GRH guarantees such $p<2(Q\log Q)^2$  (see Bach \cite{Bach} or Lamzouri, Li and Soundararajan \cite{Xiannan}) and thus $p> (\frac{1}{4}-\ve) n\log n$.
The proofs of the theorems in this section are given in Section \ref{SpecialTypeiii}.

\subsection{Type (iib) Mappings}
The Type (iib) maps,  where $f(I_i)=I_i$  for some $i$, are marked with an asterisk in Table \ref{tablei}; of course for such cases
the iterates will also fix $I_i$
and a number of these can be seen in the table. For example for $n=7$, $p=19$, the map  $f(x)=5x^5$ mod $19$ fixes $I_6$, as does
$f^2(x)=7x^7$ mod $19$, $f^3(x)=-2x^{17}$ mod $19$,  $f^4(x)=-8x^{13}$ mod $19$,  $f^5(x)=-3x^{11}$  mod $19$
and $f^6(x)=x$ mod $p$; in this case $p-6\equiv 6$ mod $n$ so that the maps with negative $A$
recorded in their positive guise also fix  $I_6$. 

Many examples of Type (iib) mappings in our data with large ratio $p/n$ are of the form $f(x)=\pm x^{(p+1)/2}\equiv \pm \left(\frac{x}{p}\right)x$ mod $p$. For $n$ even, or $n$ odd with $i\not\equiv 2^{-1}p$ mod $n$, it is readily seen that this requires  $p$ to have a   string of  roughly $p/n$ consecutive quadratic residues or nonresidues. In Theorems \ref{1xp2} to \ref{3xp2} we explore how   conversely  long blocks of  consecutive residues or nonresidues can produce large $p/n$ values. We distinguish several cases frequently encountered in the data.
Theorem \ref{1xp2} deals with consecutive quadratic residues starting at 1, Theorem \ref{central} with an interval of consecutive residues or nonresidues around $p/2$, Theorem \ref{p/3} with intervals around $p/3$ and $2p/3$,
and Theorem \ref{3xp2} with the remaining cases. Table \ref{tablebigp/n} shows  the primes $p<100,000$ with a string of at least 25 consecutive residues or nonresidues, and 
examples arising from them when Theorems \ref{1xp2} through \ref{3xp2} 
are applied as appropriate. For the non-central interval we give the $[a,a+t)$ with $a<p/2$, and omit  the symmetric interval  $(p-a-t,p-a]$.

\begin{theorem} \label{1xp2} Let $t$ be a positive integer and $p>t$ a prime with $p \equiv 1$ mod $8$ and $\left(\frac{p}{q}\right)=1$ for all odd primes $q\leq t$. Then for any
$n> (p-1)/(t+1)$ the map $f(x)=\left(\frac{n}{p}\right)x^{(p+1)/2}$ mod $p$ is the identity map on $I_0$.
\end{theorem}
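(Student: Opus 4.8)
The plan is to reduce everything to showing that each element of $I_0$ is a quadratic residue mod $p$, after which the standard identity $x^{(p+1)/2}\equiv \left(\frac{x}{p}\right)x \pmod p$ closes the argument. First I would rewrite the map: since $x^{(p-1)/2}\equiv \left(\frac{x}{p}\right)\pmod p$ for $p\nmid x$, we have $x^{(p+1)/2}\equiv \left(\frac{x}{p}\right)x$, and hence
\[ f(x)\equiv \left(\frac{n}{p}\right)\left(\frac{x}{p}\right)x \pmod p. \]
The class $I_0$ consists precisely of the multiples $x=mn$ with $1\le m\le \lfloor(p-1)/n\rfloor$. The hypothesis $n>(p-1)/(t+1)$ gives $(p-1)/n<t+1$, so $\lfloor(p-1)/n\rfloor\le t$ and $m$ ranges only over $1\le m\le t$; in particular $p>t$ guarantees $p\nmid m$, so the symbols below are nonzero.

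Next I would evaluate $f$ on such an element. Using multiplicativity of the Legendre symbol,
\[ f(mn)\equiv \left(\frac{n}{p}\right)\left(\frac{mn}{p}\right)mn = \left(\frac{n}{p}\right)^2\left(\frac{m}{p}\right)mn = \left(\frac{m}{p}\right)mn \pmod p, \]
so the prefactor $\left(\frac{n}{p}\right)$ exactly cancels the contribution of $n$ inside the symbol. Since $1\le mn\le p-1$, the least residue of $\left(\frac{m}{p}\right)mn$ equals $mn$ as soon as $\left(\frac{m}{p}\right)=1$. Thus the theorem reduces to proving that every integer $m$ with $1\le m\le t$ is a quadratic residue mod $p$.

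For that I would invoke quadratic reciprocity together with the two hypotheses. Because $p\equiv 1\pmod 8$ we have $\left(\frac{2}{p}\right)=1$, and because $p\equiv 1\pmod 4$ reciprocity gives $\left(\frac{q}{p}\right)=\left(\frac{p}{q}\right)$ for every odd prime $q$; the assumption $\left(\frac{p}{q}\right)=1$ for all odd primes $q\le t$ then yields $\left(\frac{q}{p}\right)=1$ for every prime $q\le t$. Any $m$ with $1\le m\le t$ has all of its prime factors $\le t$, so by complete multiplicativity $\left(\frac{m}{p}\right)=1$. Combined with the display above, this gives $f(mn)=mn$ for every element of $I_0$, i.e.\ $f$ is the identity on $I_0$.

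There is no serious obstacle here; the argument is a short reciprocity computation. The only points requiring care are the cancellation of the $\left(\frac{n}{p}\right)$ factor (which is precisely what makes the twisted map, rather than bare $x^{(p+1)/2}$, fix $I_0$ pointwise) and the elementary bound $\lfloor(p-1)/n\rfloor\le t$, which confines $m$ to the range where the hypotheses force every integer to be a residue.
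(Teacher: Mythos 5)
Your proof is correct and follows essentially the same route as the paper: write $f(x)\equiv\left(\frac{n}{p}\right)\left(\frac{x}{p}\right)x$, note $I_0=\{mn:1\le m\le\lfloor(p-1)/n\rfloor\}$ with $\lfloor(p-1)/n\rfloor\le t$, and cancel the $\left(\frac{n}{p}\right)$ factors so that everything reduces to $\left(\frac{m}{p}\right)=1$ for $m\le t$. The only difference is that you spell out the reciprocity argument (using $p\equiv 1$ mod $8$ for the prime $2$ and $p\equiv 1$ mod $4$ for odd primes) that the paper leaves implicit.
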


Notice that $i=0$ does not have $2i\equiv p$ mod $n$, so these examples are of interest for both odd and even $n$.
Again, by the Chinese Remainder Theorem and Dirichlet's theorem,  for any $t$, there exist  infinitely many $p$ satisfying the hypotheses of this theorem and so we get examples with $p$ as large as $n\log n$, but
for certain $p$ we can push the size of $K(n)$ a little bigger.
By the work of Graham and Ringrose  \cite{Ringrose} we know there exist infinitely many primes $p \equiv 1$ mod $4$ having a least quadratic nonresidue of size at least $c \log p \log \log \log p$ for some constant $c$ (with improvement to $c \log p \log \log p$ under GRH by Montgomery \cite{Montgomery}). Taking $t=\lfloor c \log p \log \log \log p\rfloor$, the hypotheses of the theorem are satisfied by reciprocity, and thus with $n=\lceil (p-1)/t\rceil$, we obtain a Type (iib) mapping with $p\gg n \log n \log \log \log n$ (with improvement under GRH).

Since $p\equiv 1$ mod $4,$ if  our interval of consecutive quadratic  residues or nonresidues contains $(p-1)/2,$ then we have a symmetric 
interval around $p/2$ with $\left(\frac{x}{p}\right)=\left( \frac{(p-1)/2}{p}\right)=\left(\frac{2}{p}\right)$. For odd $n$ we 
obtain examples with $p/n$ close to the interval length $t$, but  unfortunately \eqref{nodd} has $2i\equiv p$ mod $n$ which we know always gives a Type (iii) by Example \ref{ex1}, though additionally here  $f(x)$ is  the identity map on $I_i$. If we restrict to even $n$ then the ratio $p/n$ is only close to $t/2$.

\begin{theorem}\label{central}
Supppose that $p\equiv 1$ mod $4$ has $t=2T$ consecutive residues or nonresidues around $p/2$:
$$ \left( \frac{x}{p}\right) =\left(\frac{2}{p}\right),\;\;\;\;\; a=\frac{p+1}{2}-T \leq x \leq \frac{p-1}{2}+T. $$
Equivalently suppose that $\left(\frac{q}{p}\right)=1$ for all odd primes $q\leq 2T-1$. 

Suppose $n$ is even with
\be \label{neven}  \frac{2p}{t+1} < n < \frac{2p}{t-1}, \;\;\; i:=an -\left(\frac{n}{2}-1\right)p,\ee
or $n$ is odd with
\be \label{nodd}  \frac{p}{t+1} < n < \frac{p}{t-1}, \;\;\; i:=an -\left(\frac{n-1}{2}\right)p,\ee
then $f(x)= \left(\frac{2n}{p}\right)x^{(p+1)/2} $ mod $p$ is the identity map on $I_i$.

\end{theorem}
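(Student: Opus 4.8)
The plan is to reduce $f$ to a signed multiplication and then to determine the quadratic character of \emph{every} element of $I_i$ from the hypothesis on small residues. Since $p\equiv 1\pmod 4$, Euler's criterion gives $x^{(p+1)/2}\equiv\left(\frac{x}{p}\right)x\pmod p$, so $f(x)\equiv\left(\frac{2n}{p}\right)\left(\frac{x}{p}\right)x\pmod p$. Because $\left(\frac{2n}{p}\right)^2=1$, it therefore suffices to prove the single character identity $\left(\frac{x}{p}\right)=\left(\frac{2n}{p}\right)$ for every $x\in I_i$; granting this, $f(x)\equiv x\pmod p$, and as $1\le x\le p-1$ we get $f(x)=x$ exactly. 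Observe that taking the leading sign to be $\left(\frac{2n}{p}\right)$ is precisely what makes the conclusion insensitive to the value of $\left(\frac{n}{p}\right)$.

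First I would record the equivalent form of the hypothesis. Writing $x=\frac{p+1}{2}-T+j$ for $0\le j\le 2T-1$ gives $2x-p=2(j-T)+1$, and since $\left(\frac{-1}{p}\right)=1$ the assumption $\left(\frac{x}{p}\right)=\left(\frac{2}{p}\right)$ on the central block is, via $\left(\frac{x}{p}\right)=\left(\frac{2}{p}\right)\left(\frac{2x-p}{p}\right)$ and multiplicativity, equivalent to $\left(\frac{m}{p}\right)=1$ for all odd $m$ with $1\le m\le 2T-1$, hence to $\left(\frac{q}{p}\right)=1$ for all odd primes $q\le 2T-1$. Next I would unwind the definition of $i$. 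For even $n$ one computes $i=an-\left(\frac{n}{2}-1\right)p=p-\frac{n}{2}(t-1)$, so $i\equiv p-\frac{n}{2}\pmod n$, and the members of $I_i$ are exactly the integers $x=p-\frac{n}{2}u$ with $u$ odd and $1\le u\le 2(p-1)/n$. For odd $n$ one gets $2i\equiv p\pmod n$ (the coincidence noted just before the theorem, which is why Example \ref{ex1} already yields $f(I_i)=I_i$), so the members of $I_i$ are the $x$ with $2x-p=nv$, $v$ odd and $\lvert v\rvert\le (p-2)/n$.

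The crucial numerical step is the bound on $u$ (resp.\ $v$): the lower bound $n>\frac{2p}{t+1}$ in \eqref{neven} (resp.\ $n>\frac{p}{t+1}$ in \eqref{nodd}) forces $2(p-1)/n< t+1=2T+1$ (resp.\ $(p-2)/n< 2T+1$), so the odd parameter satisfies $u\le 2T-1$ (resp.\ $\lvert v\rvert\le 2T-1$), while the upper bound on $n$ ensures $I_i$ is the full block of roughly $T$ elements seen in the data. With the parameter odd and bounded by $2T-1$, the equivalent form of the hypothesis gives $\left(\frac{u}{p}\right)=1$ (resp.\ $\left(\frac{v}{p}\right)=1$) after extracting $\left(\frac{-1}{p}\right)=1$. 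A short computation then closes the argument: in the even case $x\equiv-\frac{n}{2}u\pmod p$ yields $\left(\frac{x}{p}\right)=\left(\frac{n/2}{p}\right)\left(\frac{u}{p}\right)=\left(\frac{2n}{p}\right)$, using $\left(\frac{4}{p}\right)=1$; in the odd case $2x\equiv nv\pmod p$ yields $\left(\frac{x}{p}\right)=\left(\frac{2n}{p}\right)\left(\frac{v}{p}\right)=\left(\frac{2n}{p}\right)$. This is the desired character identity, so $f(x)=x$ throughout $I_i$.

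I expect the main obstacle to be bookkeeping rather than ideas: extracting the exact description of $I_i$ and the parity of $u$ and $v$ from the closed form for $i$, and verifying that the two-sided inequalities on $n$ translate without off-by-one errors into $u,\lvert v\rvert\le 2T-1$ (the oddness of $t-1$ and the hypothesis $p\equiv 1\pmod 4$, through $\left(\frac{-1}{p}\right)=1$, are both essential here). Once the elements of $I_i$ are correctly matched with the small odd integers whose character the hypothesis controls, the rest is a routine multiplicativity calculation.
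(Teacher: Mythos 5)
Your proof is correct and follows essentially the same route as the paper's: both reduce $f(x)$ to $\left(\frac{2n}{p}\right)\left(\frac{x}{p}\right)x$, parametrize $I_i$ as an arithmetic progression, and factor out $\left(\frac{n}{p}\right)$ to reduce the character of each element to one controlled by the hypothesis. The only difference is cosmetic: the paper writes $x\equiv(a+\ell)n \bmod p$ and invokes the central-block form of the hypothesis directly, while you substitute $u=t-1-2\ell$ and invoke the equivalent small-odd-integer form, which is the same computation.
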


\begin{table}[tbp]

\footnotesize\begin{tabular}{|c|cccccc|}\hline
  &    $p$ &  $t$ & $a$ & $n$ & $i$ & $p/n$ \\ \hline
Theorem \ref{1xp2} & 87481 & 28 & 1  &  3017 & 0 &  29.115346\;\;\; \\
  ($n$ odd \& even) & 87481 & 28 & 1  &  3018 &  0 &  29.105699\;\;\; \\\hline
  & 13381 & 28 & 6677 & 463  & 440  & 28.900647\ldots  \\
& 20749 & 28 & 10361 & 717 & 695  & 28.938633\ldots \\
&  51349 & 28 & 25661  & 1771 & 1766  & 28.994353\ldots  \\
 Theorem \ref{central} &82021 & 30 & 40996  & 2647  & 2629  & 30.986399\ldots  \\
 ($n$ odd) &87481 & 28 & 43727 & 3017 & 3011  & 28.996022\ldots  \\
&89989 & 28 & 44981 & 3105 & 3077  &28.981964\ldots  \\
&92821 & 28 & 46397 & 3201 & 3197  & 28.997500\ldots \\
&99709 & 30 & 49840  & 3217 & 3208  & 30.994404\ldots \\\hline
  & 13381 & 28 & 6677 & 924  & 907  & 14.481601\ldots  \\
& 20749 & 28 & 10361 & 1432 &1417  & 14.489525\ldots \\
&  51349 & 28 & 25661  & 3542&  3532 &    14.497176\ldots  \\
 Theorem \ref{central} &82021 & 30 & 40996  & 5292  & 5287  & 15.499055\ldots  \\
 ($n$ even)  &87481 & 28 & 43727 & 6034 & 6022  & 14.498011\ldots  \\
&89989 & 28 & 44981 & 6208 & 6181 &   14.495650\ldots  \\
&92821 & 28 & 46397 &  6402 &6394  & 14.498750\ldots  \\
&99709 & 30 & 49840  & 6434 & 6416 & 15.497202\ldots \\\hline
\end{tabular}

\vspace{2.5ex}

\footnotesize\begin{tabular}{|c|cccccccc|}\hline
 Theorem \ref{p/3}     & $p$ & $t$ & $a$ &  $T_1$ & $T_2$ & $n$ & $i$ & $p/n$ \\\hline 
$n\equiv 2$ mod $3$   & 52361 & 29 &  17437 & 17 & 12 & 1976 & 1974 & 26.498481\ldots \\
   &  65129 &  27 & 21693 & 17 & 10 & 2459 & 2436 &  26.485969\ldots \\\hline
 $n\equiv 1$ mod $3$   & 52361 & 29 &  17437 & 17 & 12 & 2833 & 2800 &  18.577832\ldots \\
 &  65129 &  27 & 21693 & 17 & 10 & 4204 & 4182 &  15.492150\ldots \\\hline
$n\equiv 0$ mod $3$   & 52361 & 29 &  17437 & 17 & 12 & 2964 & 2961 & 17.665654\ldots \\
   &  65129 &  27 & 21693 & 17 & 10 & 3696 & 3529 &  17.621482\ldots \\\hline
 \end{tabular}

\vspace{2.5ex}
\begin{tabular}{|c|ccccccc|}\hline
  &    $p$ &  $t$ & $a$  & $u$ & $n$ & $i$  & $p/n$ \\ \hline
Theorem \ref{3xp2}  &90313 & 26 & 39556  & 38 & 3386 & 2437,3226 &  26.672474\ldots  \\\hline
\end{tabular}
\vspace{2ex}
\caption{Type (iii): Primes $p<100,000$  with $t\geq 25$ consecutive quadratic residues or consecutive nonresidues, $[a,a+t)$.}
\label{tablebigp/n}
\end{table}

A large interval of consecutive quadratic residues or nonresidues around $p/3$ (and hence under $x\mapsto p-x$ around $2p/3$)
will also lead to large $p/n$ values, the size depending on $n$ mod $3$.

\begin{theorem} \label{p/3} Suppose that $p\equiv 1$ mod $4$ and set 
$$ \delta := \begin{cases} 1 & \text{ if $p\equiv 1$ mod $3$, }\\2 & \text{ if $p\equiv 2$ mod $3$. }\end{cases} $$
Suppose that
$$ \left(\frac{x}{p}\right) = \left(\frac{3}{p}\right), \;\;\; $$
for
\begin{align*} a_1: & =\frac{1}{3}(p-\delta) -(T_1-1)\leq x \leq \frac{1}{3}(p-\delta) +T_2 ,\\
\;\;\; a_2:& =\frac{1}{3}(2p+\delta) -T_2\leq x \leq \frac{1}{3}(2p+\delta) +(T_1-1).\end{align*}
Equivalently 
\be  \label{equiv} \left(\frac{3m-\delta}{p}\right)=1,\;\; 1\leq m\leq T_2,\;\;\;\;\; \left(\frac{3m+\delta}{p}\right)=1,\;\; 0\leq m <T_1. \ee

Suppose that $n\equiv 0$ mod $3$ has 
$$ \frac{3p}{3T_1+\delta} < n < \frac{3p}{3T_1+\delta -3},\;\;\; \;\;i:= a_1n-\left(\frac{n}{3}-1\right)p, $$
or
$$  \frac{3p}{3T_2 +3-\delta} < n < \frac{3p}{3T_2-\delta},\;\;\;\;\; i:= a_2n-\left(\frac{2n}{3}-1\right)p, $$
or $n\equiv 2$ mod $3$,   when  $T_1 \leq 2T_2+2-\delta$  and
$$  \frac{2p}{3T_1 +\delta} < n < \frac{2p}{3T_1+\delta-3},\;\;\;\;\; i:= a_1n-\left(\frac{n-2}{3}\right)p, $$
or $n\equiv 2$ mod $3$,   when  $T_1 \geq 2T_2+2-\delta$  and
$$  \frac{p}{3T_2+3 -\delta} < n < \frac{p}{3T_2-\delta},\;\;\;\;\; i:= a_2n-\left(\frac{2n-1}{3}\right)p, $$
or $n\equiv 1$ mod $3$,   when  $T_2 \geq 2T_1-1+\delta$  and
$$  \frac{p}{3T_1 +\delta} < n < \frac{p}{3T_1+\delta-3},\;\;\;\;\; i:= a_1n-\left(\frac{n-1}{3}\right)p, $$
or $n\equiv 1$ mod $3$,   when  $T_2\leq 2T_1 -1+\delta$  and
$$  \frac{2p}{3T_2+3 -\delta} < n < \frac{2p}{3T_2-\delta},\;\;\;\;\; i:= a_2n-\left(\frac{2n-2}{3}\right)p. $$
Then $f(x)=\left(\frac{3n}{p}\right)x^{(p+1)/2} $ mod $p$ is the identity map on $I_i$.

\end{theorem}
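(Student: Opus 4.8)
The plan is to reduce the assertion ``$f$ is the identity on $I_i$'' to a statement about the quadratic character on a single block of consecutive integers, and then to verify that this block falls inside one of the two intervals of constant character supplied by the hypothesis.

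First I would record the basic identity. For any odd $p$, Euler's criterion gives $x^{(p+1)/2}\equiv \left(\frac{x}{p}\right)x \pmod p$, so for $1\le x\le p-1$ the map satisfies $f(x)=x$ precisely when $\left(\frac{x}{p}\right)=\left(\frac{3n}{p}\right)$, and $f(x)=p-x$ otherwise (this is the computation behind Example \ref{ex1}). Thus it suffices to prove that every $x\in I_i$ has $\left(\frac{x}{p}\right)=\left(\frac{3n}{p}\right)$; the hypothesis $p\equiv 1\pmod 4$ enters here only through $\left(\frac{-1}{p}\right)=1$, which is exactly what makes the block of constant character around $p/3$ in \eqref{equiv} reflect, under $x\mapsto p-x$, to the matching block around $2p/3$ that the $a_2$-cases exploit.

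Next I would reparametrize each residue class. In every case $i$ is presented as $i=an-cp$ with $c$ an integer (namely $c=\frac n3-1$ or $\frac{2n}3-1$ when $3\mid n$, and $c=\frac{n-2}3,\frac{2n-1}3,\frac{n-1}3$ or $\frac{2n-2}3$ in the remaining residue classes, each an integer by the stated congruence on $n$). Writing a general element of $I_i$ as $x=sn-cp$ with $s$ an integer, we have $x\equiv sn\pmod p$, hence $\left(\frac{x}{p}\right)=\left(\frac{n}{p}\right)\left(\frac{s}{p}\right)$, and the target identity $\left(\frac{x}{p}\right)=\left(\frac{3n}{p}\right)$ collapses to $\left(\frac{s}{p}\right)=\left(\frac{3}{p}\right)$. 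Crucially, as $x$ runs through $I_i$ the parameter $s$ runs through a block of \emph{consecutive} integers (increasing $x$ by $n$ increases $s$ by $1$), and the two inequalities $1\le x\le p-1$ pin this block down to an explicit interval of length about $p/n$ lying just below $p/3$ (for the $a_1$-choices) or straddling/above $2p/3$ (for the $a_2$-choices).

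Finally I would carry out the six cases in parallel: in each one I solve $1\le sn-cp\le p-1$ for $s$, then compare the resulting interval of $s$-values against the good interval $[a_1,\tfrac13(p-\delta)+T_2]$ (or its reflection about $\tfrac{p}{2}$ near $2p/3$), checking both endpoints. The lower bound on $n$ in each case is exactly what forces the smallest admissible $s$ to remain $\ge a_1$, while the side conditions relating $T_1,T_2$ and $\delta$ (such as $T_1\le 2T_2+2-\delta$) are precisely what guarantee that the good interval reaches far enough on the other side of $p/3$ to contain the largest admissible $s$; one then selects, according to these inequalities, whichever of the $p/3$- or $2p/3$-block is long enough, which is what produces the different ranges of $n$ and the different formulas for $i$. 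I expect the main difficulty to be purely bookkeeping: keeping the floor/ceiling effects in $1\le x\le p-1$ consistent across all six cases and both values $\delta\in\{1,2\}$, and verifying in each case that the stated $i=an-cp$ really reduces to the intended residue modulo $n$, so that one is genuinely describing the class $I_i$. Once the $s$-block is shown to lie in a block of constant character, the identity $\left(\frac{s}{p}\right)=\left(\frac{3}{p}\right)$, and with it $f(x)=x$ throughout $I_i$, follows at once.
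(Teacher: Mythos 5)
Your proposal is correct and follows essentially the same route as the paper: reduce via $x^{(p+1)/2}\equiv\left(\frac{x}{p}\right)x$ to showing $\left(\frac{x}{p}\right)=\left(\frac{3n}{p}\right)$ on $I_i$, write the elements of $I_i$ as $x=(a+\ell)n-cp$ so that $\left(\frac{x}{p}\right)=\left(\frac{n}{p}\right)\left(\frac{a+\ell}{p}\right)$, and check that the bounds on $n$ (forcing $0<i<n$) together with the side conditions on $T_1,T_2,\delta$ (forcing $i+(T_1+T_2)n\ge p$) confine the consecutive block $a+\ell$ to the interval of constant character. The paper likewise carries out only one representative case in full and declares the rest similar, so deferring the six-case bookkeeping is consistent with its level of detail.
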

Note if $p\equiv 1$ mod 3 then $\left(\frac{3}{p}\right)=1$ and a large interval around $p/3$ leads to a long interval starting at $a=1$ where one can use Theorem 2.2 to  potentially produce a larger $p/n$.
Similar theorems could no doubt be obtained for intervals around $p/5$ etc. Three  $p/3$ type examples occur in Table 2; namely 
$p=277$, $T_1=10$, $T_2=0$, where the largest $p/n$ in Theorem \ref{p/3}  will be for  $n\equiv 0$ mod $3$, with the smallest $n=27$
(smallest even $n=30$), $p=569$, $T_1=3$,$T_2=6$ where the best choice is $n\equiv 1$ mod 3, smallest $n=61$ (smallest even $n=64$), and $p=641$, $T_1=5$, $T_2=6$ where the smallest $n\equiv 1$ mod $3$ is $n=70$.

For general intervals of consecutive quadratic residues or nonresidues we have:
\begin{theorem}\label{3xp2}
Suppose that  we have $t$ consecutive quadratic residues or nonresidues  mod $p$ starting at an $a\geq 2$
$$\left(\frac{x}{p}\right)=\left(\frac{a}{p}\right),\;\; a\leq x\leq a+t-1,\;\;\; a=st+r,\;\;0\leq r<t.$$
If $ i=na-(s-u)p$ and $n$ is an integer in
\be \label{nrange}\max\left\{\frac{(s-u+1)p}{a+t}, \; \frac{(s-u)p}{a}\right\}  \leq n \leq  \frac{(s-u)p}{a-1}, \ee
or  $ i=(s-u+1)p-n(a+t-1)$ and
\be \label{nrange2}\frac{(s-u+1)p}{a+t} \leq n \leq  \min\left\{ \frac{s-u}{a-1}, \; \frac{s-u+1}{a+t-1}\right\} p, \ee
for some integer $\frac{(s+1-r)}{(t+1)} > u > s+1-\frac{1}{2}(a+t)$, then 
\be \label{deff} f(x)= \left(\frac{an}{p}\right) x^{(p+1)/2} \text{ mod } p, \ee
is the identity map on $I_i$.

\end{theorem}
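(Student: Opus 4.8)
The plan is to reduce the claim ``$f$ is the identity map on $I_i$'' to a single statement about the quadratic character, and then to read that character off from the block of consecutive residues after parametrising $I_i$ correctly. Throughout I use $p\equiv 1\bmod 4$ (forced, since otherwise $x^{(p+1)/2}$ is not a permutation of $I$), so that $\left(\frac{-1}{p}\right)=1$.

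First I would record the basic reduction. By Euler's criterion $x^{(p+1)/2}\equiv \left(\frac{x}{p}\right)x\pmod p$, so $f(x)\equiv \left(\frac{an}{p}\right)\left(\frac{x}{p}\right)x\pmod p$. Since $f(x)$ and $x$ both lie in $\{1,\dots,p-1\}$ and $f(x)\equiv \pm x$, we have $f(x)=x$ \emph{precisely} when $\left(\frac{an}{p}\right)\left(\frac{x}{p}\right)=1$, that is, when
\be \left(\frac{x}{p}\right)=\left(\frac{an}{p}\right). \ee
So it suffices to verify this one character identity for every $x\in I_i$.

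Next I would parametrise $I_i$ in the first range. Writing $w:=s-u$, the definition $i=na-wp$ means every $x\equiv i\bmod n$ has the form $x=nm-wp$ with $m\in\mathbb Z$ (indeed $nm-wp\equiv -wp\equiv i\bmod n$, and incrementing $m$ steps $x$ by $n$), so $I_i=\{\,nm-wp:\ 1\le nm-wp\le p-1\,\}$ and the constraint $1\le x\le p-1$ is exactly $\tfrac{wp+1}{n}\le m\le \tfrac{(w+1)p-1}{n}$. Because $x\equiv nm\pmod p$ we get $\left(\frac{x}{p}\right)=\left(\frac{n}{p}\right)\left(\frac{m}{p}\right)$. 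The inequalities defining the first range \eqref{nrange} now pin $m$ into the block: from $n\le \tfrac{wp}{a-1}$ one gets $\tfrac{wp+1}{n}>a-1$, hence $m\ge a$; from $n\ge \tfrac{(w+1)p}{a+t}$ one gets $\tfrac{(w+1)p-1}{n}<a+t$, hence $m\le a+t-1$; and $n\ge \tfrac{wp}{a}$ together with $n\le\tfrac{wp}{a-1}$ give $0\le i<n$. For $a\le m\le a+t-1$ the hypothesis yields $\left(\frac{m}{p}\right)=\left(\frac{a}{p}\right)$, so $\left(\frac{x}{p}\right)=\left(\frac{n}{p}\right)\left(\frac{a}{p}\right)=\left(\frac{an}{p}\right)$, which is the required identity.

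The second range is handled identically: here $x=(w+1)p-nm'$ with $m'=a+t-1-k$, so $x\equiv -nm'\pmod p$ and, using $\left(\frac{-1}{p}\right)=1$, $\left(\frac{x}{p}\right)=\left(\frac{n}{p}\right)\left(\frac{m'}{p}\right)$; the bounds \eqref{nrange2} again force $a\le m'\le a+t-1$ and $0\le i<n$. Finally, the constraints $\tfrac{s+1-r}{t+1}>u>s+1-\tfrac12(a+t)$ on $u$ are exactly what guarantee the displayed $n$-intervals are non-empty so that an admissible $n$ exists (the upper bound on $u$ is equivalent to $w\ge \tfrac{a-1}{t+1}$, i.e.\ $\tfrac{(w+1)p}{a+t}\le\tfrac{wp}{a-1}$). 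I expect the only genuine labour to be this interval bookkeeping — tracking the floors, ceilings and the $\pm1$'s so that the integer range of $m$ (resp.\ $m'$) lands inside $[a,a+t-1]$ without spilling one step past either end; the conceptual core, the character computation, is immediate.
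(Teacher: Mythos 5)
Your proposal is correct and follows essentially the same route as the paper: reduce $f(x)=x$ to the character identity $\left(\frac{x}{p}\right)=\left(\frac{an}{p}\right)$, parametrise $I_i$ so that the bounds on $n$ force the multiplier $m$ (the paper's $a+\ell$) into the block $[a,a+t-1]$, and invoke the run of consecutive residues. The only blemishes are cosmetic — the stray undefined $k$ in the second case and the fact that nonemptiness of the $n$-range additionally needs $a+t<\sqrt{p}$ to guarantee an integer $n$, as the paper remarks.
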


Notice that for very large $a+t$  and a given $u$ we are not guaranteed an $n$ in the range \eqref{nrange}, but for small $a$, for example $a+t<\sqrt{p},$ there will be at least one $n,$ leading to an example with $p/n$ close to $t$ for small $u$.
Computationally searching the primes $p<10,000$ for Type (iii) mappings of the form $f(x)=x^{(p+1)/2}$ mod $p,$ with
$2i\not\equiv p$ mod $n$  if $n$ is odd, found $578$
primes $p\equiv 1$ mod $4$ admitting an $n$ with $7<p/n<20$. In each case we checked the minimal $n$ found against 
Theorem \ref{3xp2}.
Of these $545$ corresponded to taking  the longest string of consecutive quadratic residues or nonresidues in 
Theorem \ref{3xp2} (for $a$,$t$ and a suitable $u$), with $16$ to taking the second longest, and the remaining $17$ cases found with shorter intervals.

Theorems \ref{1xp2} to \ref{3xp2} used runs of $x$ where  $x^{(p-1)/2}$ mod $p$ is constant. One could similarly 
consider intervals where $x^{(p-1)/3}$ mod $p$ is constant. We state the counterpart of Theorem \ref{central}, 
since several examples of an interval around $p/2$ appear in Table \ref{tablento70} (namely $p=853$ and $367$ with $t=10$ and $a=422$ or $179$ respectively). 

\begin{theorem}\label{thirds}
Suppose that  $p\equiv 1$ mod $3$ and that
\be \label{cubicruns}  x^{(p-1)/3}\equiv a^{(p-1)/3} \text{ mod } p,\;\;\;\;\; a=\frac{p+1}{2}-T \leq x \leq \frac{p-1}{2}+T. \ee
Then, with $t=2T$ and $n$ and $i$ as in \eqref{neven} and \eqref{nodd},  any  map
\be \label{f/3} f(x)=(an)^{2(k-1)}  x^{k} \text{ mod } p,\;\;k=j(p-1)/3+1,\;  j=1,2, \;\;\gcd(k,p-1)=1, \ee
will be the identity map on $I_i$, and when $n$ is odd any map
\be \label{f/6}  f(x)=(an)^{2(k-1)}  x^{k} \text{ mod } p,\;\;k=j(p-1)/6+1,\; j=1,5, \;\;\gcd(k,p-1)=1, \ee
will have $f(I_i)=I_i$.

\end{theorem}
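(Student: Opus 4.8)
The plan is to run the same arithmetic--geometric packing that underlies Theorem~\ref{central}, with the quadratic symbol replaced by the cubic one. Write $\chi(y):=y^{(p-1)/3}\bmod p$ for the cubic residue character, a multiplicative map on $\mathbb{F}_p^{\times}$ valued in the cube roots of unity (well defined since $p\equiv 1\bmod 3$), so that hypothesis \eqref{cubicruns} says exactly that $\chi$ is constant, equal to $\chi(a)$, on the interval $[a,a+t-1]=[\frac{p+1}{2}-T,\frac{p-1}{2}+T]$. First I would record the combinatorial fact behind Theorem~\ref{central}: for $n,i$ chosen as in \eqref{neven} or \eqref{nodd}, every element of $I_i$ has the shape $x=i+mn$ with $x\equiv (a+m)n\bmod p$, and as $x$ runs through $I_i$ the integer $a+m$ runs through a block of consecutive integers lying inside $[a,a+t-1]$ (while $x$ stays in $[1,p-1]$). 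This depends only on the integers $n,i,a,t,p$ and not on any character, so it is the identical counting already carried out for Theorem~\ref{central} and may be quoted verbatim; the windows in \eqref{neven} and \eqref{nodd} are precisely what keep $a+m$ from leaving the good interval. It follows that $\chi(x)=\chi((a+m)n)=\chi(a)\chi(n)$ for every $x\in I_i$.

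For the maps \eqref{f/3} with $k=j(p-1)/3+1$ the conclusion is then a one--line exponent count. On $I_i$ we have $x^{k-1}=x^{j(p-1)/3}\equiv\chi(x)^{j}=(\chi(a)\chi(n))^{j}$, while the prefactor is $(an)^{2(k-1)}=(an)^{2j(p-1)/3}\equiv\chi(an)^{2j}=(\chi(a)\chi(n))^{2j}$, so
\[
 f(x)\equiv (an)^{2(k-1)}x^{k}\equiv (\chi(a)\chi(n))^{2j}\cdot x\cdot(\chi(a)\chi(n))^{j}=(\chi(a)\chi(n))^{3j}\,x\equiv x \pmod p,
\]
because $\chi(a)^{3}=\chi(n)^{3}=1$. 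Since $f(x)$ and $x$ both lie in $[1,p-1]$ this gives $f(x)=x$, i.e.\ $f$ is the identity on $I_i$, for both parities of $n$. The prefactor $(an)^{2(k-1)}$ is tailored exactly so that the cubic symbols accumulate to total exponent $3j$ and vanish.

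For the sextic maps \eqref{f/6} with $n$ odd and $k=j(p-1)/6+1$, $j\in\{1,5\}$, I would introduce $\rho(y):=y^{(p-1)/6}$ and use $\rho(y)=\left(\frac{y}{p}\right)\chi(y)^{-1}$ (from $(p-1)/6=(p-1)/2-(p-1)/3$). Here only $\chi$, not the quadratic symbol, is controlled on the interval. For $x=i+mn\in I_i$ one has $x^{k-1}\equiv\rho(x)^{j}=\left(\frac{x}{p}\right)\chi(x)^{-j}$ (using that $j$ is odd) and $(an)^{2(k-1)}\equiv\chi(an)^{j}=\chi(x)^{j}$, the last equality because $\chi(an)=\chi(a)\chi(n)=\chi(x)$ on $I_i$; the two cubic factors cancel and, since $x\equiv(a+m)n\bmod p$,
\[
 f(x)\equiv (an)^{2(k-1)}\,x\,\rho(x)^{j}\equiv\left(\frac{x}{p}\right)x=\left(\frac{n}{p}\right)\left(\frac{a+m}{p}\right)x \text{ mod } p .
\]
Thus $f(x)=x$ when $\left(\frac{n}{p}\right)\left(\frac{a+m}{p}\right)=1$ and $f(x)=p-x$ otherwise. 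As $x\equiv i$ and $p-x\equiv p-i\bmod n$, the reflected points return to $I_i$ exactly when $2i\equiv p\bmod n$; the odd choice \eqref{nodd} gives $2i=2an-(n-1)p\equiv p\bmod n$, the very condition isolated in Example~\ref{ex1}, whereas the even choice \eqref{neven} yields $2i\equiv 2p\bmod n$ and fails. Hence $f(I_i)\subseteq I_i$, and since $\gcd(k,p-1)=1$ makes $f$ a permutation of $I$, injectivity upgrades this to $f(I_i)=I_i$.

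The only substantive work is the packing statement borrowed from Theorem~\ref{central}: checking that the explicit ranges for $n$ in \eqref{neven} and \eqref{nodd} force $a+m$ to remain in $[a,a+t-1]$ for all of $I_i$. Everything past that is the short character bookkeeping above. The one conceptual point to flag is that for \eqref{f/6} the interval pins down only the cubic symbol, so a genuine quadratic reflection survives; this is exactly why that half of the theorem needs $2i\equiv p\bmod n$, is available only for odd $n$, and delivers $f(I_i)=I_i$ rather than a pointwise identity.
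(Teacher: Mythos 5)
Your proposal is correct and follows essentially the same route as the paper: write each $x\in I_i$ as $x=i+\ell n\equiv (a+\ell)n \bmod p$ via the packing from Theorem~\ref{central}, use the constancy of $y^{(p-1)/3}$ on the interval to evaluate $x^{k-1}$, and observe that the prefactor $(an)^{2(k-1)}$ makes the cubic parts cancel, leaving $f(x)\equiv x$ for \eqref{f/3} and $f(x)\equiv \pm x$ for \eqref{f/6} with the reflection absorbed by $2i\equiv p\bmod n$ when $n$ is odd. Your treatment of the sextic case is in fact slightly more explicit than the paper's (identifying the sign as $\left(\frac{x}{p}\right)$ rather than just $\pm$), but it is the same argument.
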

This could be further generalized to intervals where $x^{(p-1)/q}$  mod $p$ is constant.

\subsection{Type (i) and (iia) Mappings}

Only a few cases were found where $Ax^k$ mod $p$  permutes every residue class:

\begin{example}  The only cases of Type  (i), that is $f(I_j)=I_j$ for all $j$, found  for $3\leq n\leq 12$ and $p<20,000$, were
$n=3$,  $(p;A,k)=(5; -1,3)$ and $(7;-3,5)$.

\end{example}

The examples of Type (iia) found, that is  where $f(I_1),\ldots ,f(I_n)$ is a
 permutation of $I_1,\ldots ,I_n,$ are shown in Table \ref{Table2a}. By symmetry we include only   $A>0$ and of course exclude $f(x)=x$.

\begin{table}[tbp]

\small\begin{tabular}{|ccccc|}\hline
    & $p$ & $A$ & $ k$ & $\sigma$ \\\hline
$n = 3$ & 5 &  1 & 3 & (02) \\
 & 7  & 3 &  5 &  (13) \\ \hline
 $n = 4$
 & 7 &  2 &  5 & (12)\\
& 11 &  1 &  9  & (03)\\
 & 13  & 2 &  5 & (0312) \\ \hline
 $n = 5$
& 7 &  1 &  5 &(03) (24) \\ \hline
 $n = 8$
& 11 &  2 &  9  & (03)(12)(46)\\
& 13 & 4  &5  & (06)(14)(23)(57)\\\hline
 $n = 9$
 & 11 & 1  & 3  & (0354)(2867) \\
 & 11 & 1 &  7 & (0453)(2768)\\
&  11  & 1 &  9 & (05)(26)(34)(78)\\
 & 13 &  1  &  7 & (58)(67)\\ \hline
$n = 10$ & 13 & 2 &11 & (08)(12)(35)(47)(69) \\\hline
$n = 11$
 & 13 &  1 &  5 & (07)(26)(39)(4\hspace{.5ex}10)\\
&13  &1  & 7 &  (02)(58)(67)\\
& 13 &  1 & 11 &  (06)(27)(39)(4\:10)(58)\\ \hline
\end{tabular}

\vspace{2ex}
\caption{Type (iia): $f(x)=Ax^k$ mod $p$ with  $f(I_i)= I_{\sigma(i)}$  for some permutation $\sigma,$ for $3\leq n\leq 12$, $n+1<p<20,000$ (with $A>0$ and $f(x)\neq x$).}
\label{Table2a}
\end{table}

The reoccurrence of $p=n+2$ and $p=n+3$ is easily explained; in these cases we have only one or two residue classes with two entries,
$I_1=\{1,-1\}$ fixed by any $f(x)=x^k,$ or $I_1=\{1,-2\},I_2=\{2,-1\}$  interchanged by
$f(x)=2x^{p-2},$ with the remaining singleton sets permuted. This leaves only a few examples with $p\geq n+4$.  We searched for further Type (iia)  examples with $n+4\leq p < 5n$  for $13\leq n\leq 100$. The results are shown in Table \ref{Table2aExtra}.  It turns out that all of these primes have the following property:

\begin{table}[tbp]

\small\begin{tabular}{|cccc|}\hline
  $n$  & $p$ & $A$ & $ k$ \\\hline
24 & 29  & 1 &  15\\
33 & 37  & 1  & 19 \\
35 & 41 &  1 &  21\\
48 & 53 &  1  & 27\\
 57 & 61 &  1  & 31 \\
 68 & 73 &  1 &  37\\
  69 & 73 & 1  & 37\\
 83 & 89 &  1 & 45 \\
 91& 101 & 1  &  51\\
 92 & 97 &  1 &  25,49,73\\
93 &  97 &  1 &  49\\
96 & 101 & 1 &  51 \\\hline
\end{tabular}

\vspace{2ex}
\caption{Type (iia): $f(x)\neq x$ for $13\leq n\leq 100$ and  $n+4\leq p<5n$.}
\label{Table2aExtra}

\end{table}

\begin{theorem}\label{legendre} If $p=n+w$ with $2 \le w <n$, $p\equiv 1$ mod $4$ and
$$\left( \frac{y}{p}\right)=\left( \frac{w-y}{p}\right), \hbox{ for all $1\leq y< w/2$}, $$
then $f(x)=x^{(p+1)/2}$  mod $p$ produces a Type (iia) permutation.

\end{theorem}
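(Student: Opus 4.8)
The plan is to exploit the Euler-criterion identity $f(x)=x^{(p+1)/2}\equiv \left(\frac{x}{p}\right)x \pmod p$, so that $f$ fixes each quadratic residue and sends each nonresidue $x$ to $p-x$. Since $p\equiv 1\pmod 4$ we have $\left(\frac{-1}{p}\right)=1$, so the reflection $x\mapsto p-x$ preserves the quadratic character. First I would pin down the shape of the residue classes. Writing $p=n+w$ with $2\le w<n$ forces $n<p-1<2n$, so the only multiple of $n$ in $I$ is $n$ itself; a short count then gives $I_0=\{n\}$ and $I_w=\{w\}$ as singletons, the doubletons $I_j=\{j,\,j+n\}$ for $1\le j\le w-1$, and the remaining singletons $I_j=\{j\}$ for $w+1\le j\le n-1$.

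Next I would compute the image of each class modulo $n$, recording that $p\equiv w\pmod n$. For a singleton $\{x\}$ the value $f(x)$ is $x$ or $p-x$, landing in $I_x$ or in $I_{w-x}$ according to whether $\left(\frac{x}{p}\right)$ is $+1$ or $-1$. The decisive point is the behaviour of a doubleton $I_j=\{j,\,j+n\}$: here $j+n\equiv -(w-j)\pmod p$, so by $\left(\frac{-1}{p}\right)=1$ its character is $\left(\frac{j+n}{p}\right)=\left(\frac{w-j}{p}\right)$, \emph{not} $\left(\frac{j}{p}\right)$. Thus $f(j)$ lies in $I_j$ or $I_{w-j}$ according to $\left(\frac{j}{p}\right)$, while $f(j+n)$ lies in $I_j$ or $I_{w-j}$ according to $\left(\frac{w-j}{p}\right)$.

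This is where the hypothesis enters, and is the crux of the argument: unless $\left(\frac{j}{p}\right)=\left(\frac{w-j}{p}\right)$, the two members of $I_j$ would land in the two distinct classes $I_j$ and $I_{w-j}$, so $f$ would split $I_j$ rather than map it into a single class. The assumption $\left(\frac{y}{p}\right)=\left(\frac{w-y}{p}\right)$ for $1\le y<w/2$ (and trivially at $y=w/2$), propagated through the involution $y\leftrightarrow w-y$, yields exactly $\left(\frac{j}{p}\right)=\left(\frac{w-j}{p}\right)$ for every $1\le j\le w-1$. Hence each doubleton maps bijectively either onto $I_j$ when the common character is $+1$, or onto $I_{w-j}$ when it is $-1$; in the latter case one checks that $f(j)=p-j=n+(w-j)$ and $f(j+n)=w-j$ fill out $I_{w-j}$ exactly.

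Finally I would assemble the induced class map. The companion reflection identities $\left(\frac{n}{p}\right)=\left(\frac{w}{p}\right)$ and $\left(\frac{p-j}{p}\right)=\left(\frac{j}{p}\right)$ show that $f$ either fixes or swaps the pair $\{I_0,I_w\}$, and either fixes or swaps each pair $\{I_j,I_{p-j}\}$ of singletons with $w+1\le j\le n-1$. Together with the doubleton pairs $\{I_j,I_{w-j}\}$, these exhaust $I_0,\dots,I_{n-1}$, so the induced map $\sigma$ is a product of disjoint transpositions and fixed points---an involution of $\{0,\dots,n-1\}$---with $f(I_i)=I_{\sigma(i)}$ bijectively, which is precisely a Type (iia) permutation. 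I expect the only genuine obstacle to be the doubleton bookkeeping of the middle paragraphs: correctly tracking that the large representative $j+n$ carries the character of its reflection $w-j$ rather than that of $j$, so that the hypothesis is seen to be exactly the condition equalizing the two characters. Everything else follows routinely from the class structure and the parity $p\equiv 1\pmod 4$.
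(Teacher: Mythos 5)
Your proof is correct and follows essentially the same route as the paper's: identify the doubleton classes $I_j=\{j,\,j+n\}=\{j,\,-(w-j)\}$ for $1\le j\le w-1$, use $f(x)\equiv\left(\frac{x}{p}\right)x$ together with $\left(\frac{-1}{p}\right)=1$ and the hypothesis to see that each doubleton is fixed or swapped with its partner $I_{w-j}$, and observe that the singletons are likewise fixed or paired off. The paper's version is just a terser statement of the same argument.
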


The additional $f(x)=x^{(p+3)/4}$ and $x^{(3p+1)/4}$ mod $p$  for $p=97$ are also predictable; for example
for $p\equiv 1$ mod 24 all three $f(x)$  will occur for $p=n+4$ or $p=n+5$ when
 $3^{(p-1)/4}\equiv 1$ mod $p$ or  $2^{(p-1)/4}\equiv 3^{(p-1)/4}$ mod $p$  respectively  (where one might expect
either of these  to occur roughly half the time, 193 and 97 respectively being the first cases of these), with similar conditions for $p=n+6$, $n+7$ etc that should hold  for a positive proportion of the time.
The sparsity of Type (i) and (iia) examples suggests the following conjecture.

\begin{conjecture} Suppose that $p>2n$.

Excluding $f(x)=x$ mod $p$, there are only finitely many examples of  Type (i), that is   $f(I_i)=I_i$ for all $i$.

Excluding $f(x)=\pm x$ mod $p$  there are only finitely many examples of  Type (iia), that is $f(I_i)=I_{\sigma(i)}$ for some permutation $\sigma$ of $\{0,1,\ldots, n-1\}$.

\end{conjecture}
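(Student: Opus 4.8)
The plan is to reduce both assertions to a single statement about Type (iia) maps and then try to squeeze $p$ against $n$. First note that Type (i) and Type (iia) are both special cases of Type (iii): if $f(I_i)=I_{\sigma(i)}$ for a permutation $\sigma$ (the identity, in the Type (i) case) then $f(I_i)\subseteq I_{\sigma(i)}$. Hence by Theorem \ref{main} any Type (iia) map with $p\geq 9\cdot 10^{34}n^{92/3}$ must be one of the exceptional maps $\pm x$ (excluded in the Type (iia) statement) or, when $n$ is odd, $\pm x^{(p+1)/2}$. But Example \ref{ex1} shows that for odd $n$ the map $\pm x^{(p+1)/2}$ sends $I_i$ onto \emph{two} distinct classes whenever $i\not\equiv 2^{-1}p$ mod $n$, and such an $i$ exists for every $n\geq 3$; so these maps are never Type (iia). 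The same remarks show $-x$ is never Type (i) (its induced $\sigma$ fixes no class for $n\geq 2$) and $\pm x^{(p+1)/2}$ is never Type (i). Consequently every Type (i) map other than $x$, and every Type (iia) map other than $\pm x$, satisfies $2n<p<9\cdot 10^{34}n^{92/3}$, and the entire conjecture becomes equivalent to the assertion that $n$ (equivalently $p$) is \emph{absolutely} bounded on this family; for each fixed $n$ the finiteness is already supplied by Theorem \ref{main}.

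To attempt the absolute bound I would pass to the joint distribution $N_{i,j}:=\abs{f(I_i)\cap I_j}$ and expand it in additive characters mod $n$, writing $N_{i,j}=n^{-2}\sum_{a,b}e_n(-ai-bj)\,W(a,b)$ with the hybrid sums $W(a,b)=\sum_{x=1}^{p-1}e_n(ax+bf(x))$. Parseval over $(i,j)\in(\mathbb{Z}/n)^2$ gives $\sum_{a,b}\abs{W(a,b)}^2=n^2\sum_{i,j}N_{i,j}^2$, and for a Type (iia) map $\sum_{i,j}N_{i,j}^2=\sum_i\abs{I_i}^2\asymp p^2/n$, so after removing the trivial term $W(0,0)=p-1$ the nontrivial sums must carry $\ell^2$-mass $\asymp np^2$. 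Inserting any pointwise bound $\abs{W(a,b)}\ll B$ for $(a,b)\neq(0,0)$ then forces $n^2B^2\gg np^2$, i.e.\ $B\gg p/\sqrt n$. The $W(a,b)$ are controlled, after expanding $e_n\!\bigl(b\,(Ax^k\ \mathrm{mod}\ p)\bigr)$ into the $e_p$-Fourier modes of the least-residue function, by the binomial exponential sums \eqref{binsum} underlying Theorem \ref{asym}; the resulting estimate $B\ll d\log^2 p+p^{89/92}\log^2 p$ (with the sharper Weil bound $\abs{k-1}\sqrt p$ for small $\abs k$) yields $p\ll n^{O(1)}$, recovering the effective range already implied by Theorems \ref{main}, \ref{square} and \ref{lehmer}.

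The difficulty is that this program, exactly like Theorem \ref{main}, produces only an upper bound $p\ll n^{O(1)}$ and never an \emph{absolute} one; combined with $p>2n$ it leaves the near-linear window $2n<p\ll n^{O(1)}$ open, and that window contains infinitely many admissible pairs $(n,p)$. Global finiteness therefore hinges on ruling out Type (iia) in the regime $p\asymp n$, where each $I_i$ has only $\asymp p/n$ elements and the main term $p/n^2$ of \eqref{asymptotic} is comparable to its error, so equidistribution gives no leverage. Here I would instead use the rigidity of Type (iia) directly: assuming $A>0$ (the case $A<0$ reduces to it via $x\mapsto p-x$) and setting $q(x):=\lfloor Ax^k/p\rfloor$, the identity $f(x)=Ax^k-pq(x)$ together with $(x+n)^k\equiv x^k$ mod $n$ and $\gcd(p,n)=1$ shows that Type (iia) forces $q(x+n)\equiv q(x)$ mod $n$ for all admissible $x$, i.e.\ the quotient $q$ is constant mod $n$ along every class. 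Since these increments are nonnegative and lie within $1$ of $A((x+n)^k-x^k)/p$, demanding that each be $\equiv 0$ mod $n$ is a severe Diophantine constraint, and I would run a descent on consecutive classes---using the size constraints that $\sigma$ must preserve $\abs{I_i}$---to force $k=1$, whence $f=\pm x$ by Theorem \ref{mainiiik=1}, once $n$ exceeds an absolute constant. Making this descent uniform in $n$ is the main obstacle: the small-class regime is precisely where the character-sum methods of the paper are powerless, so a successful argument will almost certainly have to marry the $q$-rigidity with the combinatorics of the permutation $\sigma$ (in the spirit of the $n=2$ determination in \cite{CochKony}) rather than rely on any single exponential-sum estimate.
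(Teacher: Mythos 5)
This statement is a \emph{conjecture} in the paper, not a theorem: the authors offer no proof, only the computational evidence of Tables \ref{Table2a} and \ref{Table2aExtra} and the remark (via Theorem \ref{legendre} and the Graham--Ringrose bound \cite{Ringrose}) that the hypothesis $p>2n$ cannot be weakened to $p>n+C\log n$. So there is no paper proof to compare against, and your proposal should be judged on whether it settles the conjecture on its own. It does not, and you say so yourself. Your first paragraph is correct and is essentially the observation the authors themselves make just before stating the conjecture: Type (i) and (iia) maps are Type (iii), so Theorem \ref{main} together with Example \ref{ex1} confines every putative example (other than $\pm x$) to the window $2n<p<9\cdot 10^{34}n^{92/3}$, and the conjecture is equivalent to an \emph{absolute} bound on $n$ over this family. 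Your Parseval computation in the second paragraph is also sound (the identity $\sum_{a,b}\abs{W(a,b)}^2=n^2\sum_{i,j}N_{i,j}^2$ and the consequence $B\gg p/\sqrt n$ are correct), but, as you correctly diagnose, any pointwise bound of the shape $d\log^2p+p^{89/92}\log^2p$ can only reproduce $p\ll n^{O(1)}$, i.e.\ what Theorem \ref{main} already gives. The third paragraph's $q$-rigidity observation ($q(x+n)\equiv q(x)$ mod $n$ along each class, since $\gcd(pn,\,\cdot\,)$ considerations kill the other terms) is a valid necessary condition, but the ``descent'' that is supposed to exploit it is never carried out, and you concede that making it uniform in $n$ is the obstacle. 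That obstacle \emph{is} the conjecture; nothing in the proposal reduces it.

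One concrete defect worth flagging in the sketched endgame: your $q$-rigidity argument, as described, nowhere uses the hypothesis $p>2n$. But the paper's Theorem \ref{legendre}, combined with Graham--Ringrose, produces \emph{infinitely many} genuine Type (iia) examples with $n<p<2n$ (indeed with $p$ as large as $n+c\log n\log\log\log n$), all satisfying your congruence constraint on the quotients $q(x)$. Hence any completion of your descent must break down precisely when $n<p<2n$ and succeed when $p>2n$; an argument that does not visibly consume the inequality $p>2n$ (for instance, through the fact that every class $I_i$ then has at least two elements, which is what the $p=n+w$ counterexamples exploit via their singleton classes) cannot be correct. This is the missing idea, and identifying exactly how $p>2n$ enters --- presumably by marrying the quotient rigidity with the permutation combinatorics as in \cite{CochKony} --- is where any future proof would have to begin.
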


Indeed there may be no Type (i) or (iia) maps  with $p>2n$ other than the few cases found above for $n=3$ or $4$.
From the Graham \& Ringrose  \cite{Ringrose} bound $g(p)\gg \log p \log\log\log p$ on the least quadratic nonresidue we see that we cannot replace the $p>2n$ in this conjecture by $p>n+C \log n,$ however large the $C$.


\section{Type $(iii)$ and type $(iv)$  intersections for small $d$}
 For
$j=0, 1,\ldots , n-1$, let $I_j$ be the set of values in \eqref{Ij}.  Put
\begin{equation} \label{Nj}
N_j:=|I_j|= \begin{cases} \left\lfloor \frac {p-1+n-j}n\right\rfloor, & \text{if $j \neq 0$};\\
\lfloor \frac pn\rfloor,& \text {if $j=0$}.\end{cases}
\end{equation}

\begin{theorem}\label{smallgcd}   Suppose that $p>607.$  Then for any $A$ and $k$ satisfying  \eqref{range}, $2\leq n<p$, and $0\leq i,j\leq n-1,$ we have
\begin{equation} \label{smalldasym} \left| f(I_i)\cap I_j\right| = p^{-1}|I_i||I_j|  + E, \end{equation}
with
$$ |E|\leq  \left(d+1+2.293p^{89/92}\right)\left(\frac{4}{\pi^2}\log p + 0.381\right)^2, $$
and
$$ \frac{p}{n^2}-1 < p^{-1}|I_i||I_j|   < \frac{p}{n^2}+1. $$

 For $7\le p\le 607$, the same result holds with $.381$ replaced by $1/2$.
\end{theorem}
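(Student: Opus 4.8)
The plan is to detect the power--residue relation $y\equiv Ax^k \bmod p$ by orthogonality mod $p$, isolate the zero frequency as the main term, and reduce the rest to the binomial exponential sums \eqref{binsum} by completing the incomplete sums over $I_i$ and $I_j$. Writing $e_m(t)=e^{2\pi i t/m}$ and using that $f$ is a bijection of $\{1,\dots,p-1\}$, for $y\in\{1,\dots,p-1\}$ the indicator of $y\equiv Ax^k\bmod p$ is $\tfrac1p\sum_{a=0}^{p-1}e_p(a(y-Ax^k))$, so summing over $x\in I_i$ and $y\in I_j$ gives
\be
|f(I_i)\cap I_j|=\frac1p\sum_{a=0}^{p-1}V(a)\,U(a),\qquad V(a)=\sum_{y\in I_j}e_p(ay),\quad U(a)=\sum_{x\in I_i}e_p(-aAx^k).
\ee
The term $a=0$ equals $V(0)U(0)/p=N_iN_j/p=p^{-1}|I_i||I_j|$, and I would call $E$ the remaining sum over $1\le a\le p-1$. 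For the main term, \eqref{Nj} gives $|N_j-p/n|<1$ for every $j$; writing $N_i=p/n+\delta_i$ and $N_j=p/n+\delta_j$ with $|\delta_i|,|\delta_j|<1$ and expanding $N_iN_j/p$ then yields $\bigl|p^{-1}N_iN_j-p/n^2\bigr|<1$ (recall $N_iN_j=|I_i||I_j|$), which is the second displayed inequality.

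To estimate $E$ I would complete the power sum $U(a)$. Expanding the indicator of $I_i$ in additive characters mod $p$ gives $U(a)=\frac1p\sum_{c=0}^{p-1}\widehat{\mathbf 1}_{I_i}(c)\,B(c,-aA)$, where $\widehat{\mathbf 1}_{I_i}(c)=\sum_{x\in I_i}e_p(-cx)$ and $B(c,\alpha)=\sum_{x=0}^{p-1}e_p(\alpha x^k+cx)$ is a complete binomial sum. The frequency $c=0$ contributes nothing: since $\gcd(k,p-1)=1$ the map $x\mapsto x^k$ permutes the nonzero residues, so $B(0,-aA)=1+\sum_{x=1}^{p-1}e_p(-aAx^k)=1+\sum_{y=1}^{p-1}e_p(-aAy)=0$. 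For $1\le c\le p-1$ and $1\le a\le p-1$ both coefficients $-aA$ and $c$ are nonzero (as $p\nmid A$), so writing $B(c,-aA)=1+\sum_{x=1}^{p-1}e_p(-aAx^k+cx)$ and invoking the bound for \eqref{binsum} gives $|B(c,-aA)|\le d+1+2.293\,p^{89/92}$, with $d$ as in \eqref{defd}.

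Finally I would insert the standard $L^1$ estimate for the Dirichlet kernel. Writing $\mathcal B:=d+1+2.293\,p^{89/92}$ and $L:=\frac{4}{\pi^2}\log p+0.381$, the substitution $m\equiv cn\bmod p$ (a bijection of the nonzero residues, since $\gcd(n,p)=1$) turns $\sum_{c=1}^{p-1}|\widehat{\mathbf 1}_{I_i}(c)|$ into $\sum_{m=1}^{p-1}\bigl|\sin(\pi m N_i/p)/\sin(\pi m/p)\bigr|\le pL$; this kernel estimate, with the constant $0.381$ valid for $p>607$ and replaced by $1/2$ for $7\le p\le607$, is precisely the source of the stated constants and of the threshold on $p$. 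Hence $|U(a)|\le \mathcal B L$ uniformly in $a$, and applying the same estimate to $\sum_{a=1}^{p-1}|V(a)|\le pL$ yields
\be
|E|\le\frac1p\sum_{a=1}^{p-1}|U(a)|\,|V(a)|\le\frac{\mathcal B L}{p}\sum_{a=1}^{p-1}|V(a)|\le\mathcal B L^2,
\ee
which is the claimed bound on the error. The one genuinely hard ingredient is the uniform binomial--sum bound $\bigl|\sum_{x=1}^{p-1}e_p(\alpha x^k+cx)\bigr|\le d+2.293\,p^{89/92}$ for $\alpha c\neq0$: the term $d$ reflects the up-to-$d$ critical points solving $x^{k-1}=-c/(k\alpha)$ (whence the appearance of $d=\gcd(k-1,p-1)$), and the $p^{89/92}$ is the nontrivial cancellation estimate for such sparse sums. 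Everything else is orthogonality, completion of sums, and the kernel $L^1$ bound.
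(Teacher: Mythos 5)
Your proof is correct and follows essentially the same route as the paper: detect the intersection with additive characters mod $p$, isolate the zero frequency as $p^{-1}N_iN_j$, and bound the remaining bilinear Fourier sum by the explicit Cochrane--Pinner binomial-sum estimate times the square of the Vinogradov-type $L^1$ kernel bound (your two-step completion --- orthogonality in $a$, then completing $U(a)$ --- is just the paper's simultaneous double Fourier expansion performed in a different order). The one soft spot is the main-term inequality: from $|N_j-p/n|<1$ alone the error is $2/n+1/p$, which exceeds $1$ when $n=2$, so you need the slightly sharper $N_j\le (p-1)/n+1$ from \eqref{Nj}, giving an excess $2(n-1)/n^2+(n-1)^2/(n^2p)<1$; the paper's own proof is equally terse at this point, and the fix is immediate.
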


\begin{proof}

For any $i,j \in \{0,1, \dots, n-1\}$  write
$$ N_{ij}=|f(I_i)\cap I_j|. $$
 We use $\mathbb Z_p$ to denote the integers mod $p$, and view $I_i,I_j$ as subsets of $\mathbb Z_p$. We write $\mathscr{I}_i(x)$, $\mathscr{I}_j(x)$ for the characteristic functions for $I_i$, $I_j$ so that
$$ N_{ij}=\sum_{x \text{ mod } p} \mathscr{I}_i(x) \mathscr{I}_j(Ax^k). $$
Since $\mathscr{I}_j(x)$ is a periodic function mod $p$ we have a finite Fourier expansion
$$  \mathscr{I}_j(x)=\sum_{u \text{ mod } p} a_j(u)e_p(ux) $$
where $e_p(x)=e^{2\pi i x/p}$, and for $u=0,\ldots ,p-1$,
$$ a_j(u) = \frac{1}{p} \sum_{y \text{ mod } p} \mathscr{I}_j(y) e_p(-yu) = \begin{cases} p^{-1}N_j, 
 & \hbox{ if $u= 0$;} \\
p^{-1}  e_p(\xi_j u) \frac{\sin (\pi nu N_j/p )}{\sin(\pi nu/p)}, & \hbox{ if $u\neq 0,$  }\end{cases}$$
for some  $\xi_j$ in $\mathbb Z_p$.
Hence, separating into zero and nonzero values of $u$ and $v,$ and observing that $Ax^k$ is a permutation of $\mathbb Z_p$, we have
$$ N_{ij}  = \sum_{x=0}^{p-1} \sum_{ u=0}^{p-1} \sum_{v=0}^{p-1} a_i(u)e_p(ux) a_j(v) e_p(vAx^k)
   =M + T_1 + T_2+ E, $$
where
$$ M= p\ a_i(0)a_j(0)= p^{-1} N_iN_j,$$
\begin{align*} T_1 & =a_j(0)\sum_{u=1}^{p-1} a_{i}(u) \sum_{x=0}^{p-1}   e_p(ux) =0, \\
  T_2 & =a_i(0) \sum_{v=1}^{p-1} a_j(v) \sum_{x=0}^{p-1} e_p(vAx^k)= a_i(0) \sum_{v=1}^{p-1} a_j(v) \sum_{x=0}^{p-1} e_p(vx)=0, \end{align*}
and
$$ E =\sum_{u=1}^{p-1} \sum_{v=1}^{p-1} a_i(u)a_j(v) \sum_{x=0}^{p-1} e_{p}(ux+vAx^k). $$
Now from \cite[Theorem 1.3 ]{CochPin} we have, with $d=(k-1,p-1)$,
\be \label{binbound} \left| \sum_{x=0}^{p-1} e_{p}(ux+vAx^k)\right| \leq 1 + d + 2.292\ p^{89/92},  \ee
and from  \cite[Theorem 1]{Coch1},
observing that $nx$ is a permutation of the $x$ mod $p$,
\begin{align*}  \sum_{u=1}^{p-1} |a_j(u)|  & \leq \frac{1}{p} \sum_{x=1}^{p-1} \frac{|\sin(\pi x N_j/p )|}{|\sin( \pi x/p ) |} \\
 & \leq \frac{4}{\pi^2} \log p +.38+\frac{0.608}{p} +\frac{0.116}{p^3} \\
 &\leq \begin{cases} \frac{4}{\pi^2} \log p +.381, & \text{if $p>607$};\\
  \frac 4{\pi^2} \log p + \tfrac 12, & \text{if $p>5$}.
 \end{cases}
   \end{align*}
Hence for $p>607$,
\begin{align*}  |E|  & \leq (d+1+2.292\ p^{89/92}) \left(\sum_{u=1}^{p-1}|a_i(u)| \right)\left(\sum_{v=1}^{p-1}|a_j(v)| \right) \\
 &\leq  (d+1+2.292\ p^{89/92}) \left(\frac{4}{\pi^2} \log p +.381\right)^2. \end{align*}

\comments{Writing $p\equiv w$ mod $n$, $1\leq w <n$, we have for $(j,s)\neq (0,0)$
$$ M \geq \frac{1}{p} \left\lfloor \frac{p}{n}\right\rfloor^2 = \frac{1}{p} \left(\frac{p-w}{n}\right)^2\geq \frac{p}{n^2} -\frac{2w}{n^2}\geq \frac{p}{n^2}-\frac{1}{2}$$
and for $j=s=0$
$$ M-1= \frac{1}{p}\left( \left\lfloor \frac{p-1}{n} \right\rfloor +1\right)^2 -1 =\frac{1}{p} \left( \frac{p+n-w}{n}\right)^2-1 > \frac{p}{n^2}-1.$$
}

Since $p/n - 1 < N_j < p/n+1$ we have $p^2/n-1 < M < p^2/n+1$.
\end{proof}

Notice that if  $d<0.006 p^{89/92}$ and $p\geq e^{333}  (n\log n)^{184/3}$ then the main term in Theorem \ref{smallgcd} exceeds the error term and we can say that $f(I_i)\cap I_j\neq \emptyset$ for all $i,j$.
If our interest  is just in proving that $f(I_i) \cap I_j$ is nonempty, rather than obtaining an asymptotic estimate of its cardinality, then as shown in the next theorem we do not need  the  $\log n$ term.

\begin{theorem} \label{smallgcd2} Let $p$ be an odd prime and $A,k,n$ integers satisfying \eqref{range} with $2 \le n<p$,
and
$$ d=\gcd(k-1,p-1) \leq  0.006 p^{89/92}. $$

\noindent
(a) For any $i,j$, $0 \le i,j < n$, we have $f(I_i) \cap I_j \neq \emptyset$ provided that
$$
p>4 \cdot 10^{29}\; n^{\frac {184}3}.
$$

\noindent
(b) For any $i,j$, $0 \le i,j  < n$, we have  $f(I_i) \not\subseteq  I_j$ provided that
$$
p>9 \cdot 10^{34} \; n^{\frac {92}3}.
$$

\end{theorem}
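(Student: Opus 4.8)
The plan is to rerun the Fourier computation of Theorem~\ref{smallgcd}, but to replace the sharp indicators of $I_i,I_j$ (whose $L^1$ Fourier masses $\sum_{u\neq0}\abs{a_i(u)}$ are of size $\tfrac{4}{\pi^2}\log p$, and so are exactly what produces the spurious $\log^2 p$ factor) by \emph{smooth} nonnegative weights whose Fourier coefficients are absolutely summable with an \emph{absolute} constant. First I would normalize the problem to genuine intervals. Since $p\nmid n$, multiplication by $\bar n\equiv n^{-1}$ mod $p$ carries each residue class $I_j$ to the block $J_j:=\bar n I_j$ of $N_j$ consecutive residues, and writing $t\equiv\bar n x$ turns $f(x)=Ax^k$ into $t\mapsto A't^k$ with $A'\equiv An^{k-1}$; the exponent $k$, the quantity $d$, and hence the complete binomial sum bound \eqref{binbound} are all unchanged. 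Thus $N_{ij}=\#\{t\in J_i: A't^k\bmod p\in J_j\}$ with $J_i,J_j\subseteq\mathbb Z_p\setminus\{0\}$ intervals of lengths $N_i,N_j$, and the hypothesis $d\le 0.006\,p^{89/92}$ gives $B:=1+d+2.292\,p^{89/92}<2.3\,p^{89/92}$.

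The device is a fixed smooth bump $\psi\ge0$, from which, by rescaling it to be supported within $J_i$ at height $\le1$, I build a nonnegative minorant $h\le\mathbf 1_{J_i}$ with $\sum_t h(t)\gg N_i$ and $\sum_{u\neq0}\abs{\hat h(u)}\le C_0$, and similarly a minorant $g\le\mathbf 1_{J_j}$. The essential point, and what removes the logarithm, is that a genuinely smooth bump has $\widehat\psi\in L^1(\mathbb R)$, so after discretization $\sum_{u}\abs{\hat h(u)}\approx\|\widehat\psi\|_{L^1}$ is bounded by an absolute constant, whereas the indicator of an interval (or even a trapezoid) only achieves $O(\log p)$. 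For part~(a) I would then estimate $N_{ij}\ge\sum_t h(t)\,g(A't^k)$, expand $h,g$ in finite Fourier series exactly as in the proof of Theorem~\ref{smallgcd}, note that the cross terms with $u=0$ or $v=0$ vanish because $t\mapsto A't^k$ permutes $\mathbb Z_p$, and read off the main term $p^{-1}(\sum_t h)(\sum_y g)\gg p/n^2$ from the $(0,0)$ term. The remaining terms are bounded by $B\big(\sum_{u\neq0}\abs{\hat h(u)}\big)\big(\sum_{v\neq0}\abs{\hat g(v)}\big)\le C_0^2 B\ll p^{89/92}$, so $N_{ij}>0$ as soon as $p/n^2$ dominates this, i.e. once $p^{3/92}\gg n^2$, which is the claimed $p>4\cdot10^{29}n^{184/3}$.

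For part~(b) only the minorant $h\le\mathbf 1_{J_i}$ is needed, together with a single frequency. Suppose $f(I_i)\subseteq I_j$, so $A't^k\bmod p$ lies in the length-$N_j$ interval $J_j$ for every $t\in J_i$. Choosing one nonzero frequency $v^*$ of size $\asymp p/N_j\asymp n$ so that $\mathrm{Re}\,e_p\!\big(v^*(y-c)\big)\ge\cos(\pi/4)$ for all $y\in J_j$ (with $c$ the centre of $J_j$), the nonnegativity of $h$ gives $\mathrm{Re}\,e_p(-v^*c)\sum_t h(t)e_p(v^*A't^k)\ge\tfrac1{\sqrt2}\sum_t h(t)\gg N_i$. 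On the other hand, expanding $h$ and using that $v^*\neq0$ makes \emph{every} inner sum $\sum_t e_p(ut+v^*A't^k)$ a complete binomial sum of modulus $\le B$, this quantity is $\le B\sum_u\abs{\hat h(u)}\ll B$. Hence $N_i\ll B$, contradicting $N_i\gg p/n$ once $p/n\gg B$, i.e. $p^{3/92}\gg n$, which is the claimed $p>9\cdot10^{34}n^{92/3}$. The gain of only one factor $p/n$ over $B$ (rather than the main term's $p/n^2$) is exactly why the exponent drops from $184/3$ to $92/3$.

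The one genuinely delicate step is the explicit, logarithm-free construction of the weights: choosing $\psi$ and passing from the continuous bump and its transform to the discrete sums $\sum_t h(t)$ and $\sum_{u\neq0}\abs{\hat h(u)}$ on $\mathbb Z_p$ with completely explicit constants, controlling both the discretization error and the tail of $\widehat\psi$, so as to pin down the numerical constants $4\cdot10^{29}$ and $9\cdot10^{34}$. The exponential-sum input \eqref{binbound}, the reduction to intervals, and the orthogonality that kills the cross terms are all routine; the extremal-function bookkeeping is where the real work lies.
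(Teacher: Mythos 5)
Your proposal is correct in substance and its two halves sit in different relations to the paper. For part (a) you are essentially reproducing the paper's argument in different clothing: the paper also removes the $\log^2 p$ factor of Theorem \ref{smallgcd} by replacing the sharp indicators with nonnegative functions having multiplicative Fourier structure, namely the convolutions $\alpha_j=\mathscr I_{J_j}*\mathscr I_{K_j}$ of indicators of two arithmetic progressions of difference $n$ whose sumset lies in $I_j$; Cauchy--Schwarz plus Parseval then gives $\sum_v |b_j(v)|\le |J_j|^{1/2}|K_j|^{1/2}$ with no logarithm, and the same bookkeeping against the bound \eqref{binbound} yields the exponent $184/3$. Two remarks here. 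First, the paper never needs your normalization $x\mapsto n^{-1}x$: it builds the smoothed weights inside the residue classes directly. Second, your parenthetical claim that a trapezoid ``only achieves $O(\log p)$'' is false: a convolution of two intervals of lengths $M$ and $\delta M$, normalized to height $1$, has Fourier $L^1$ mass at most $\delta^{-1/2}$ by exactly this Cauchy--Schwarz/Parseval computation. This is good news for you --- the step you flag as the ``genuinely delicate'' one (constructing logarithm-free weights with explicit constants) is in fact the easy part, and the paper's explicit constants $4\cdot10^{29}$ and $9\cdot10^{34}$ fall out of the convolution construction with no smooth-bump analysis at all. For part (b) your route genuinely differs from the paper's. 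The paper reuses the part (a) positivity argument with $I_j$ replaced by its complement $I_j^c=\mathbb Z_p\setminus I_j$, of size at least $2p/3-1$ for $n\ge 3$, building weights from progressions of length about $p/3$; the gain from $|I_j^c|\sim p$ versus $|I_j|\sim p/n$ is what lowers the exponent from $184/3$ to $92/3$. You instead run a concentration (single test frequency) argument: if $f(I_i)\subseteq I_j$ then $\sum_t h(t)e_p(v^*A't^k)$ has modulus $\gg N_i\sim p/n$, contradicting \eqref{binbound}. Both reduce to the same threshold $p/n\gg p^{89/92}$, so the exponents agree; your version is arguably more transparent about where the factor $p/n$ comes from, while the paper's has the virtue of being literally the same lemma as part (a) run on a bigger set.

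One genuine (though fixable) defect in your part (b): the single-frequency step degenerates for small $n$. For an interval $J_j$ of length $N_j\approx p/n$, a nonzero integer frequency $v^*$ forces a phase spread of at least $\pi N_j/p\approx \pi/n$, so your requirement $\mathrm{Re}\,e_p(v^*(y-c))\ge\cos(\pi/4)$ is unattainable for $n\le 3$ (for $n=3$ you can settle for $\cos(\pi/3)=1/2$, which is fine), and for $n=2$ the best available lower bound is $\cos\bigl(\pi(p-1)/2p\bigr)\approx \pi/2p$, which destroys the argument entirely. You must either treat $n=2$ separately (e.g.\ via part (a), noting its threshold $4\cdot10^{29}\,2^{184/3}$ exceeds the claimed $9\cdot10^{34}\,2^{92/3}$, so this is not automatic) or restrict to $n\ge 3$ --- which is precisely what the paper does, opening its part (b) with ``We may assume that $n\geq3$.''
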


\begin{proof} (a)  Recall, for $0 \le j \le n-1$, $I_j=\{x: \text {$x \equiv j$ mod $n$, $x\neq 0$}\} \subseteq \mathbb Z_p$, $N_j = |I_j|$.
Let
\begin{align*}
J_j & := \{j, j+n, \dots, j+ (\lceil N_j/2\rceil -1) n\} \subseteq \mathbb Z_p, \quad j\neq 0,\\
K_j & := \{0,n,2n, \dots, \lfloor N_j/2\rfloor n\}\subseteq \mathbb Z_p,
\end{align*}
with $J_0=\{n,2n,\dots, \lceil N_0/2\rceil  n\}$, so that $J_j+K_j \subseteq I_j$, and let
$\alpha_j = \mathscr I_{J_j}*\mathscr I_{K_j}$, the convolution of the characteristic functions of $J_j$ and $K_j$,
$$
\alpha_j(x):= \underset{u+v=x}{\sum_{u \in J_j}\sum_{v \in K_j }}1,
$$
with Fourier coefficients $b_j(y)$ say. Then $\alpha_j$ is supported on $I_j$, and so our goal is to show that for any $i,j$,
$$
\sum_{x \text{ mod } p} \alpha_i(x) \alpha_j(Ax^k) >0.
$$
Expanding the sum as before we obtain
\begin{align*}
&\sum_{x \text{ mod } p} \alpha_i(x) \alpha_j(Ax^k)\\
&= b_i(0)b_j(0)p + \sum_{u\neq 0}\sum_{v \neq 0} b_i(u)b_j(v) \sum_{x \text{ mod } p} e_p(ux+vAx^k)\\
&= M' + E',
\end{align*}
say, where $M'$ is the main term and $E'$ the error term. Plainly, we have
$$
M'=|J_i||K_i||J_j||K_j|p^{-1}.
$$
Next, using the fact that $b_j(v) = p a_j'(v)a_j''(v)$ where $a_j'(v)$, $a_j''(v)$ are the Fourier coefficients of $\mathcal I_{J_j}$, $\mathcal I_{K_j}$, we obtain from the Cauchy-Schwarz inequality and Parseval identity that for any $j$,
\begin{align*}
\sum_{v \text{ mod }p} |b_j(v)| &= p \sum_{v \text{ mod } p} |a_j'(v)||a_j''(v)| \le p \left(\sum_{v \text{ mod } p} |a_j'(v)|^2\right)^{\frac 12} \left(\sum_{v \text{ mod } p} |a_j''(v)|^2\right)^{\frac 12} \\
&= |J_j|^{\frac 12}|K_j|^{\frac 12}.
\end{align*}
Thus, since $d\leq 0.006p^{89/92}$ and $p>10^{29}$,
$$
|E'|\le (d+1+ 2.292\ p^{89/92}) |J_j|^{\frac 12}|K_j|^{\frac 12}|J_i|^{\frac 12}|K_i|^{\frac 12} < 2.299\ p^{89/92} |J_j|^{\frac 12}|K_j|^{\frac 12}|J_i|^{\frac 12}|K_i|^{\frac 12},
$$
and we see that $M'>|E'|$ provided that
\be \label{thm3.1typeiv}
|J_j||K_j||J_i||K_i| > p^2\left(2.299\ p^{89/92}\right)^2.
\ee
Since $|J_j| \ge N_j/2$, $|K_j| \ge N_j/2$ and $N_j \ge \frac pn -1$ for all $j$, it suffices to have
$$
(p-n)^4 > 2^4n^4p^2 \left(2.299\ p^{89/92}\right)^2,
$$
and for this it suffices to have
$
p> 4\cdot 10^{29} n^{\frac {184}3}.
$

\vspace{2ex}
(b) We may assume that $n\geq3$. For type (iii) intersections, we let $I_j^c= \mathbb Z_p \setminus I_j$, a set of cardinality $N_j^c =p-|I_j| \ge p(1-\frac 1n)-1\geq 2p/3 -1$. For $j\neq 0$ we shall think of $I_j^c$ as the arithmetic progression
$$ I_j^c=\{j+nt \; : \; t=N_j,N_j+1,\ldots ,p-1\}, $$
on observing that the values corresponding to $t=0,1,\ldots ,p-1$ are distinct mod $p$, giving $\mathbb Z_p,$ and that we have removed the $t=0,1,\ldots , N_j-1$ constituting $I_j$.
Similarly  $I_0^c=\{nt \; : \; t=N_0+1,\ldots ,p\}$. We define $\alpha_j$ as above with sets
\begin{align*}  J_j' & =\{  j+N_j n,j+(N_j+1)n,\ldots , j+(N_j+\lceil N_j^c/2\rceil -1)n \},\; j\neq 0,\\
K_j' & =\{0,n,2n,\ldots ,\lfloor N_j^c/2\rfloor n\}, \end{align*}
with $J_0'=\{ (N_j+1)n,\ldots (N_j+\lceil N_j^c/2\rceil )n\}$, so that $J_j',K_j'$ have cardinalities at least $N_j^c/2$, and  $\alpha_j$ is supported on $I_j^c$.  Once again we succeed in obtaining $f(I_i)\cap I_j \neq \emptyset$, provided that
\be \label{thm3.1typeiii}
|J_j'||K_j'||J_i||K_i| >p^2 (2.299 p^{89/92})^2,
\ee
and for this it suffices to have
\begin{equation} \label {typeiiisize}
p>9\cdot 10^{34} \; n^{\frac {92}3}.\qedhere
\end{equation}

\end{proof}

\section{Type $(iii)$ intersections for large $d$ }\label{dlarge(iii)}

In this section we show that for large $d$ we cannot have $f(I_i)\subseteq I_j$ for any $i,j$ provided $p$ is sufficiently large. Recall $$I=\{1,2,\dots, p-1\}, \quad \quad I_j:\{x\in I: x \equiv j \text{ mod } n\}.$$

\begin{theorem}\label{biggcd} Suppose that $f(x)\neq \pm x$  mod $p$ when  $n$  is even, and $f(x)\neq \pm x$ or $\pm x^{\frac{1}{2}(p+1)}$ mod $p$ when $n$ is odd. If
 $p>10^6 $
and $d\geq 0.66 n p^{1/2}\log^2 p$,  then $f(I_i)\cap (I\setminus I_j) \neq \emptyset$ for all $i,j$.

The same conclusion holds if  $p\ge 7$ and $d>3n\sqrt{p}\log^2 p$.
\end{theorem}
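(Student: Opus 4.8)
The plan is to estimate $N_{ij}=|f(I_i)\cap I_j|$ with the Fourier apparatus already built in the proof of Theorem~\ref{smallgcd} and to extract a structural consequence of $f(I_i)\subseteq I_j$ that only the excluded maps can satisfy. Writing $\mathscr I_{I_j}(y)=\sum_v a_j(v)e_p(vy)$ as there gives
$$ N_{ij}=\frac{N_iN_j}{p}+\sum_{u\neq 0}\sum_{v\neq 0}a_i(u)a_j(v)\,S(u,v),\qquad S(u,v)=\sum_{x\bmod p}e_p(ux+vAx^k). $$
Since $d$ is large, $L:=(p-1)/d$ is small; let $H$ be the subgroup of order $L$ (so $x^{k-1}\in H$ for all $x$) and $G=\{x:x^{k-1}=1\}$ the subgroup of order $d$. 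First I would split $S(u,v)$ according to the value $\tau=x^{k-1}\in H$: on each coset of $G$ the exponent is linear, $ux+vAx^k=(u+vA\tau)x$, so the inner sum is a coset sum of an additive character. By the Gauss–sum bound each coset sum is $O(\sqrt p)$ unless $u+vA\tau\equiv 0$, where it equals $|G|=d$; hence $S(u,v)=d\,\mathbf{1}[\,-u/(vA)\in H\,]+O(L\sqrt p)$.

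The off-diagonal terms contribute, via the estimate $\sum_{v\neq 0}|a_j(v)|\le \tfrac{4}{\pi^2}\log p+0.381$ from the proof of Theorem~\ref{smallgcd}, an error $|E|\le L\sqrt p\bigl(\tfrac{4}{\pi^2}\log p+0.381\bigr)^2$. This is where $d\ge 0.66\,n\,p^{1/2}\log^2 p$ enters: it makes $L$ small enough that $|E|$ is small against the main term, which is of order $p/n$. Unwinding the resonant terms by orthogonality evaluates the remaining piece in closed form: $\frac{N_iN_j}{p}+D=\frac dp\sum_{x\in I_i}|f(x)H\cap I_j|+O(1)$. Thus $f(I_i)\subseteq I_j$ (i.e.\ $N_{ij}=N_i$) would force $\frac dp\sum_{x\in I_i}|f(x)H\cap I_j|$ to be within $O(|E|)$ of $N_i$; since each summand is $\le L$ and $dL=p-1$, this forces all but a controlled number of the image cosets $f(x)H$ to lie \emph{entirely} inside $I_j$. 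In other words, type~(iii) forces $I_j$ to be, up to a small error, a union of cosets of $H$, i.e.\ (approximately) invariant under multiplication by $H$.

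The crux is then to show that the arithmetic progression $I_j=\{y\equiv j \bmod n\}$ cannot be $H$-invariant unless $f$ is excluded. The natural way to detect this is to bound, for a fixed $\omega\in H\setminus\{1\}$, the correlation $G_\omega:=\#\{y\in I_j:\omega y\bmod p\in I_j\}=p\sum_v a_j(v)a_j(-\omega v)$, whose main term is only $N_j^2/p\asymp p/n^2$. When $L\ge 3$ one may take $\omega\neq\pm1$; the key point is that $|a_j(v)|$ is large only for $v$ near a multiple of $p/n$, and for a nontrivial root of unity $\omega$ of order $\ge 3$ the peaks of $a_j(v)$ and of $a_j(-\omega v)$ do not coincide, so $G_\omega$ stays of order $p/n^2$, far below the $N_i\asymp p/n$ full cosets that type~(iii) demands. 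For $L\le 2$ the subgroup is $H=\{1\}$ or $\{\pm1\}$, corresponding exactly to $f(x)=Ax$ and $f(x)=Ax^{(p+1)/2}$: here $G_{-1}=N_j$ when $2j\equiv p\bmod n$ and $G_{-1}=0$ otherwise, and the former occurs only for $n$ odd and reproduces precisely the type~(iii) behaviour of Example~\ref{ex1}; the maps that genuinely remain ($A\neq\pm1$) are dispatched by Theorems~\ref{mainiiik=1} and~\ref{square}.

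I expect the main obstacle to be the uniform control of the resonant contribution, equivalently of $G_\omega$ (or of the coset-sum error), for the nontrivial $\omega\in H$ when $\omega$ happens to lie close to a rational of small height modulo $p$: in that case the peaks of $a_j(v)$ and $a_j(-\omega v)$ can partially realign and the naive bound degrades. This delicate clash between the additive structure of $I_j$ and the multiplicative structure of $H$ is exactly what isolates the exceptional maps $\pm x$ and $\pm x^{(p+1)/2}$, and closing the constants—so that both stated thresholds ($p>10^6$ with $d\ge 0.66\,n\,p^{1/2}\log^2 p$, and $p\ge 7$ with $d>3n\sqrt p\log^2 p$) follow—will come down to balancing this term and the off-diagonal error $|E|\asymp L\sqrt p\log^2 p$ against the main term of order $p/n$, which is what produces a threshold of order $n\sqrt p\log^2 p$.
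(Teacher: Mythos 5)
Your overall architecture---decompose the binomial sum over cosets of the kernel $G$ so that $f$ acts linearly on each coset, isolate a resonant main term $\tfrac1L\sum_{x\in I_i}|AxH\cap I_j|$, and then show that an arithmetic progression cannot absorb whole $H$-cosets---is a recognizable dual of the paper's argument, but it has a genuine gap exactly at the point you flag as the ``main obstacle,'' and that obstacle is not a technicality. The asserted bound $G_\omega=\#\{y\in I_j:\omega y\bmod p\in I_j\}=O(p/n^2)$ for $\omega\in H\setminus\{\pm1\}$ is false: if the absolute least residue of $\omega$ (or of $\omega^{-1}$) is small, or $\omega$ is close to $p/b$ for a small integer $b$, then multiplication by $\omega$ carries $I_j$ onto a union of very few progressions mod $n$ and $G_\omega$ can be as large as roughly $p/(2n)\asymp N_j/2$. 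So the ``peaks do not coincide'' heuristic fails precisely in the realignment regime you mention, and nothing in your proposal closes it. What is true, and what the paper proves by an explicit case analysis on the size of the least positive residue $C$ of the multiplier (culminating in $M_{ij}\le\max\{2p/n^2+2,\ p/(2n)+1\}\le 2p/(3n)+2$ for $n\ge3$), is the much weaker statement that $\#\{x\in I_i:Cx\bmod p\in I_j\}\le N_i-(p/(3n)-3)$ for every multiplier $C\not\equiv\pm1$; this weaker bound suffices, and your argument needs to be re-aimed at it rather than at $p/n^2$.

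There is a second, quantitative problem: even granting the corrected correlation bound, your deduction runs through ``$N_{ij}=N_i$ forces all but $O(L|E|)$ of the cosets $AxH$ to lie entirely in $I_j$,'' which costs an extra factor of $L$ against the Fourier error $|E|\asymp L\sqrt p\log^2p$; the resulting condition is roughly $L^2\sqrt p\log^2p<p/(3n)$, i.e.\ $d\gg n^{1/2}p^{3/4}\log p$, which does not reach the stated threshold $d\ge0.66\,n\,p^{1/2}\log^2p$. The paper avoids both difficulties at once: it fixes a single $C\equiv AB^{k-1}\not\equiv\pm1$ (which exists because the image set $\mathscr{C}$ has $L$ elements, or equals $\{\pm A\}$ with $A\neq\pm1$ in the cases $L\le2$ not excluded by hypothesis), restricts attention to $\mathscr{U}=\{x\in I_i:\ Cx\bmod p\in I\setminus I_j,\ x\equiv Bz^L\}$ on which $f(x)\equiv Cx$ mod $p$, and detects the condition $x\equiv Bz^L$ with the $L$ multiplicative characters satisfying $\chi^L=\chi_0$; Gauss-sum estimates then give $L|\mathscr{U}|\ge(N_i-M_{ij})-0.22(L-1)\sqrt p\log^2p$, and the single factor of $L-1$ is what produces the stated threshold. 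Your treatment of $L\le2$ by deferring to Theorems \ref{mainiiik=1} and \ref{square} is legitimate (those are proved independently and their hypotheses are implied here), but the $L\ge3$ case as proposed does not close.
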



Plainly  $f(x)=\pm x$ mod $p$ maps $I_i$ to $I_i$ or to  $I_{\overline{i}}$ where $\overline{i}\equiv p-i$ mod $n$ so must be excluded. The $f(x)=\pm x^{(p+1)/2}$ are dealt with in Example \ref{ex1}, where for $p>4n^2$ and $n$ even we always have $f(I_i) \cap (I\setminus I_j) \neq \emptyset$, but for odd $n$ must be excluded.

 \begin{proof} Suppose that $(A,k)\neq (\pm 1,1)$ or $(\pm 1,(p+1)/2)$.
Observe that  the set of absolute least residues
$$  \mathscr{C}= \{C=A x^{k-1} \text{ mod } p \; :\; 1\leq x\leq p-1,\;\;\ |C|<p/2\},   $$
must contain at least one element $C\neq \pm 1$. To see this observe that $\mathscr{C}$ contains  $(p-1)/d$ elements
and hence more than two unless $d=(p-1)$ or $(p-1)/2$ and $k=1$ or $(p+1)/2.$  In these cases  $\mathscr{C}$  contains only $A$ or $\pm A$  and we just need to avoid $A=\pm 1$. We need to prove that $f(I_i)\cap (I\setminus I_j)\neq \emptyset$.
 We shall suppose that our $C\equiv AB^{k-1}$ mod $p$  satisfies $1<C<p/2$; if all the potential $C$'s are negative we replace $A$ by $-A$ and $j$ by the least residue of $p-j$ mod $n$.
 We let
$$ L:=(p-1)/d $$
and
$$\mathscr{U}=\{ x\in I_i \: : \;\; Cx \text{ mod } p \in I\setminus I_j, \;\; x\equiv Bz^L \text{ mod } p \text{ for some $z$}\}. $$
Notice that if $x$ is in $\mathscr{U}$ we have
$$ Ax^k\equiv Cx (B^{-1}x)^{k-1} \equiv Cx z^{L(k-1)}=Cx (z^{p-1})^{(k-1)/d} \equiv Cx \text{ mod } p $$
and we have an $f(x)$ in $f(I_i)\cap  (I\setminus I_j)$. So it is enough to show that $|\mathscr{U}|>0$.
Let $\hat{G}$ denote the set of Dirichlet   (multiplicative) characters on $\mathbb Z_p^*$ with principal character $\chi_0$ and recall that
$$ \sum_{\chi \in \hat{G}, \chi^{L}=\chi_0} \chi (y) = \begin{cases} L, & \hbox{ if $y$ is an $L$th power mod $p$,} \\
0, & \hbox{ if $y$ is not an $L$th power mod $p$. } \end{cases} $$

Hence, writing $\mathscr{I}^c_j(x) $ for the characteristic function of $I\setminus I_j$, the complement of $I_j$, we have
$$ L|\mathscr{U}|= \sum_{x\in \mathbb Z_p^*}  \mathscr{I}_i(x)\mathscr{I}^c_j(Cx)     \sum_{\chi \in \hat{G}, \chi^{L}=\chi_0} \chi (B^{-1}x). $$
Separating the principal character from the remaining $L-1$ characters with $\chi^L=\chi_0$
$$ L|\mathscr{U}|= M+E, $$
where $M$ is our `main term'
$$ M= \sum_{x\in \mathbb Z_p^*}  \mathscr{I}_i(x)\mathscr{I}^c_j(Cx), $$
and $E$ the `error'
$$ E= \sum_{\chi^L=\chi_0, \chi\neq \chi_0} \chi(B^{-1}) S(\chi), $$
with $E=0$ when $k=1$, where
$$ S(\chi)= \sum_{x\in \mathbb Z_p}  \chi(x)\mathscr{I}_i(x)\mathscr{I}^c_j(Cx).$$

\noindent
{\bf Error Term.}  Taking the finite  Fourier expansion for the intervals  as in the proof of Theorem \ref{smallgcd} we have as before
$$ \mathscr{I}_i(x)= \sum_{ y\in \mathbb Z_p} a_i(y) e_p(yx), \;\;\; |a_i(y)| =\frac{1}{p} \begin{cases}  N_i, & \hbox{ if $y=0$; }   \\  \frac{|\sin (\pi N_i ny/p)|}{|\sin(\pi ny/p)|}, &    \hbox{ if $y\neq 0$, } \end{cases}$$
with $N_i =|I_i|$,  and
$$ \mathscr{I}^c_j(x)= \sum_{ y \in \mathbb Z_p} a_j^c(y) e_p(yx), \;\;\; a_j^c(y) = \begin{cases} 1-a_j(0), & \hbox{ if $y=0$; }   \\  -a_j(y), &    \hbox{ if $y\neq 0$. } \end{cases}$$
Again, separating the terms with $u$ or $v$ zero, we have
$$ S(\chi) =  \sum_{x\in \mathbb Z_p}  \chi(x) \sum_{u=0}^{p-1} a_i(u)e_p(ux) \sum_{v=0}^{p-1} a_j^c(v)e_p(vCx)= T_1+E_1+E_2+E_3 $$
where
\begin{align*}
  T_1  & = a_i(0)a_j^c(0) \sum_{x\in \mathbb Z_p} \chi(x)=0, \\
  E_1 & =a_i(0) \sum_{v=1}^{p-1} a_j^c(v) \sum_{x=0}^{p-1} \chi (x) e_p(Cvx), \\
  E_2 & =a_j^c(0) \sum_{u=1}^{p-1} a_i(u) \sum_{x=0}^{p-1} \chi (x) e_p(ux),
\end{align*}
and
$$ E_3 = \sum_{u=1}^{p-1}\sum_{v=1}^{p-1} a_i(u)a_j^c(v) \sum_{x\in \mathbb Z_p} \chi(x)e_p((u+Cv)x). $$
Recalling that,  for a non-principal character $\chi$, the classic Gauss sums
$$ G(\chi,A)= \sum_{x=0}^p \chi (x) e_p(Ax) $$
satisfy $|G(\chi,A)|=p^{1/2}$ if $p\nmid A$ and trivially $G(\chi,A)=0$ if $p\mid A$, and again invoking \cite[Theorem 1]{Coch1}, we have for $p>607$
\begin{align*} |E_1| &\leq \frac{N_i}{p}  \sum_{v=1}^{p-1}|a_{j}^c(v)| p^{1/2} \leq  \frac{N_i}{p} \left(\frac{4}{\pi^2}\log p+0.381\right) p^{1/2}, \\
 |E_2| &\leq \frac {(p-1-N_j)}{p}  \sum_{u=1}^{p-1}|a_{i}(u)| p^{1/2} \leq   \frac{(p-1-N_j)}{p}  \left(\frac{4}{\pi^2}\log p+0.381\right) p^{1/2}, \\
 |E_3| &\leq \left( \sum_{u=1}^{p-1} |a_i(u)|\right)\left( \sum_{v=1}^{p-1} |a_j^c(v)|\right)  p^{1/2} \leq \left(\frac{4}{\pi^2} \log p +0.381\right)^2  p^{1/2},
 \end{align*}
with $N_i+(p-1-N_j) \leq p$. Hence, for $p>10^6$,
\begin{align*}  |S(\chi)|  &  \leq  \left(\frac{4}{\pi^2}\log p+0.381\right) p^{1/2} + \left(\frac{4}{\pi^2} \log p +0.381\right)^2  p^{1/2} < 0.22 \; p^{1/2}\log^2 p, \end{align*}
and
$$  |E|<  0.22 ( L-1)  p^{1/2}\log^2 p.  $$

\vspace{1ex}
\noindent
{\bf Main Term.} We have
$$ M= |I_i| - \sum_{x\in \mathbb Z_p^*}  \mathscr{I}_i(x)\mathscr{I}_j(Cx)
= N_i - M_{ij}, $$
where $N_i$ is as given in \eqref{Nj}, and
$$ M_{ij}=|\{ x\in I_i\; : \; Cx \text{ mod } p \in I_j\}|. $$
So for a lower bound on $M$ we need an upper bound on $M_{ij}$.
Since for $1\leq x<p$ we have $0<Cx <Cp$ we have
$$ M_{ij} = \sum_{u=0}^{C-1} |\{ x\in I_i \; : \; up\leq Cx < (u+1)p,\;\; Cx-up \in I_j\}|. $$
Note, if $x\equiv i$ mod $n$ then  $Cx-up\equiv j$ mod $n$ requires $u\equiv K:=(Ci-j)p^{-1}$ mod $n$. Observing that the number of elements  in a particular residue class mod $n$ in an interval of cardinality $B$ is at most $\lfloor (B-1)/n \rfloor +1$ we have
\begin{align}  M_{ij} & = \sum_{\substack{u=0\\u\equiv K \text{ mod } n}}^{C-1} \left|\left\{   x\in I_i \;  : \; \frac{up}{C} \leq x < \frac{up}{C} +\frac{p}{C}     \right\}\right| \notag \\
 & \leq  \left( \left\lfloor \frac{C-1}{n}\right\rfloor  + 1\right)  \left(  \left\lfloor \frac{p/C  }{ n } \right\rfloor +1          \right).  \label{mij1} \end{align}
Plainly
\begin{equation} \label{mij2}  M_{ij} \leq \left(\frac{C}{n}+1\right)\left(\frac{p}{Cn} +1\right) =\frac{p}{n^2} +\frac{C}{n} + \frac{p}{Cn} +1, \end{equation}
and so for $p/2n\geq C \geq 2n$,
$$M_{ij} \leq  \frac{2p}{n^2} + 1. $$
For $2n> C\geq n$, $\lfloor (C-1)/n\rfloor=1$ and so by \eqref{mij1},
$$
M_{ij} \le 2\left( \frac {p}{Cn} +1 \right) \le 2\left( \frac p{n^2} +1\right) =\frac {2p}{n^2} +2,
$$
while for $p/n\geq C> p/2n$, $\lfloor p/(Cn)\rfloor =1$, and so
$$
 \le 2\left(\frac Cn +1\right) \le 2\left(\frac p{n^2} +1\right) = \frac {2p}{n^2} +2.
$$
For $C<n$, since $2\leq C < p/2$, we have by \eqref{mij1},
$$   M_{ij}\le  \left( \left\lfloor \frac{C}{n}\right\rfloor  + 1\right)  \left(  \left\lfloor \frac{p/C  }{ n } \right\rfloor +1          \right) \leq 1\cdot \left(\frac{p}{Cn}+1\right) \leq \frac{p}{2n}+1, $$
and when $C>p/n$
$$ M_{ij} \le \left( \left\lfloor \frac{C}{n}\right\rfloor  + 1\right)  \left(  \left\lfloor \frac{p/C  }{ n } \right\rfloor +1          \right) \leq \left(\frac{C}{n}+1\right) \cdot 1 < \frac{p}{2n}+1. $$
Hence, in all cases we have
\begin{equation} \label{Mijubb}
M_{ij} \le \max \left\{ \frac {2p}{n^2} +2,\quad  \frac p{2n}+1\right\},
\end{equation}
and see that for $n\geq 3$,
$$ M_{ij}\leq \frac{2p}{3n}+2, $$
and
\be \label{bigp}  M\geq \left\lfloor \frac{p}{n} \right\rfloor - M_{ij} > \frac{p}{n}-1 -M_{ij}\geq \frac{p}{3n}-3. \ee
Thus if  $p/3n \geq  (0.22 p^{3/2}\log^2 p)/d$  we have $|E|<M$ and $|\mathscr{U}|>0$.

If instead, we use the bound $\sum_{u} |a_i(u)| \le \frac 4{\pi^2} \log p + \frac 12$, valid for $p \ge 7$, we obtain
$|E| < L \sqrt{p} \log^2 p -4$, and conclude that $M>|E|$ for $d \ge 3n \sqrt{p} \log^2 p$.
\end{proof}

\section{Proofs of  Theorems \ref{main}, \ref{mainiiik=1}, \ref{lehmer}} \label{proofmain}

\begin{proof}[Proof of Theorems \ref{main}]   Suppose that $p> 9\cdot 10^{34} \:n^{92/3}$. Then certainly $p>6.7\times 10^8$. If $d\leq 0.006\: p^{89/92}$ then Theorem \ref{main} follows from Theorem  \ref{smallgcd2}, while if $d\geq   0.66 \: n p^{1/2}\log^2 p$
it follows from Theorem \ref{biggcd}.  If neither of these occurs then $ 0.66 \:n p^{1/2}\log^2 p>d> 0.006 \:p^{89/92}$ and so
$p^{43/92}/\log^2 p <110 n.$ But this  does not occur for $p>9 \cdot 10^{34}\: n^{92/3}$.
\end{proof}

\begin{proof}[Proof of Theorem \ref{mainiiik=1}]
We revisit the proof of  Theorem   \ref{biggcd}. For $k=1$  there is no error term $E$, and so we need only show that $M>0$. This follows from \eqref{bigp} for $p>9n$. Our computations, see Table \ref{tablei}, have checked $2n<p<9n$ for $3\leq n\leq 12$ so we can assume that $n>12$. For $4n<p<9n$ we plainly have $2p/n^2 +2<18/n+2<4\leq \lfloor p/n\rfloor$ and
$p/2n+1 <p/n-1$ and thus by \eqref{Mijubb} $N_i>M_{ij}$.  Finally for $2n<p<4n$  by \eqref{Mijubb} we have $M_{ij}<3$ and, since $M_{ij}$ is a count,  $M_{ij}\leq 2$.
Hence the result when $p>3n$ and $\lfloor p/n\rfloor \geq 3$, or when $2n<p<3n$ and $|I_i|=3$.

 It remains to check the case  $2n<p<3n$ when  $|I_i|=2$. Writing $p=2n+e$ with $1\leq e<n$,  and $I_n$ for  $I_0,$ we have $I_j=\{j,j+n,j+2n\}$ for $1\leq j\leq e$, and $I_j=\{j,j+n\}$ for $e<j\leq n$. Suppose $f(I_i)\subseteq I_j$ with $e<i\leq n$, $1\leq j\leq n$. We assume that $A\neq \pm 1$ and that $A>0$ (else replace $A$ by $-A$ and $j$ by $\bar{j}=p-j$ mod $n$).
Then
$$
f(i)\equiv Ai \equiv j+un \text{ mod }p,\;\;\;\;\;
f(i+n)\equiv Ai+An \equiv j+vn \text{ mod }p, $$
for some $u \neq v \in \{0,1,2\}$.  Subtracting, we get $A\equiv v-u$ mod $p$, and since $A\geq 2$, get $A=2$, $v=2$, $u=0$. This yields $2i \equiv j$ mod $p$, meaning $j=2i>i$ since $p>2n$.  But, $v=2$ implies that $|I_j|=3$ and hence $j<i$.
\end{proof}

\comments{
\begin{proof}[Proof of Theorem \ref{k=(p+1)/2}] Suppose that $k=(p+1)/2$ and $A\neq \pm 1$.  In the proof of Theorem \ref{biggcd} we have $L=(p-1)/d=2$ and $|E|<0.22 p^{1/2}\log^2p$. Hence we just need $p>10^6$
and $p/(3n)-3>0.22 p^{1/2}\log^2p,$ which holds for $p>4275 n^2 \log^4 n$.
\end{proof}
}

\begin{proof}[Proof of Theorem \ref{lehmer}] In the proof of Theorem \ref{smallgcd2} we use the Weil bound \cite{Weil}
$|k-1|\sqrt{p}$  in place of  \eqref{binbound} and for (a) and (b) we  just need
 $$
|J_j'||K_j'||J_i||K_i| >|k-1|^2p^3  \;\;\text{ and } \;\; |J_j||K_j||J_i||K_i| >|k-1|^2p^3,
$$
in place of \eqref{thm3.1typeiii} and \eqref{thm3.1typeiv}.
\end{proof}

\section{Proof of Theorem \ref{square}}\label{Squaresection}

Notice that $f(x)=Ax^{(p+1)/2}$ mod $p$  is related to two linear maps:
\be \label{reduction}  f(x)\equiv A\left(\frac{x}{p}\right) x \equiv \pm Ax \quad \text{ mod } p, \ee
and that the inverse mapping $f^{-1}(x)$ is given by
\begin{equation} \label{finv} f^{-1}(x)=\left(\frac{A}{p}\right) A^{-1} x^{(p+1)/2}\quad \text{ mod } p.
\end{equation}
In order to prove $f(x)$ is not a Type (iii) mapping we can replace $f(x)$ with $\pm f(x)$ or $\pm f^{-1}(x)$ (with one exception), which amounts to changing $A$ to $\pm A$ or $\pm A^{-1}$. Thus we define the quantity
\begin{equation} \label{Cdef}
C:= \min \{|A|, | A^{-1}|\},
\end{equation}
where  $A, A^{-1}$ are taken to be integers with $|A|,|A^{-1}|<p/2$.  Note that if $|I_i|=|I_j|$ then $f(I_i)\subseteq I_j$ is the same as $f(I_i)=I_j$ or $f^{-1}(I_j)=I_i$.
     The one exception that needs special attention is if for some $i,j$,  $f(I_i)$ is a proper subset $I_j$, that is,
 $|I_j|=|I_i|+1$. Then $f(I_i)=I_j\setminus \{a\}$  for some $a$, with $f^{-1}(I_j\setminus \{a\})=I_i$, and so in replacing $f$ with $f^{-1}$ we must remove one element from each of the larger $I_i$ and still show that $f(I_i)$ hits at least two different residue classes.  
 
 To prove Theorem \ref{square} we must show that for $p>(4n+1)^2$, and $C \ge 2$, the mapping \eqref{reduction} is not a Type (iii) mapping.
The theorem is an immediate consequence of the following two lemmas, the first dealing with the case $C=2$, and the second all larger $C$.

\begin{lemma} \label{awkward2} Suppose that $n \ge 2$,  $f(x)=Ax^{(p+1)/2}$ mod $p$ with $A \neq \pm 1$, and let $C$ be as given in \eqref{Cdef}.
If $p>(2Cn+1)^2$ then $f(x)$ is not a Type (iii) mapping.
\end{lemma}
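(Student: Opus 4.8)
The plan is to first reduce, using the substitutions recorded just before the lemma, to the normalized case $2\le A=C<p/2$: replacing $f$ by one of $\pm f,\pm f^{-1}$ turns $A$ into $\pm A^{\pm1}$ and replaces $j$ by $\bar\jmath\equiv p-j\pmod n$, and the one exceptional proper inclusion (when $|I_j|=|I_i|+1$) only deletes a single point from the larger class, which cannot change whether the image meets two residue classes. Writing $f(x)=A\left(\frac{x}{p}\right)x \bmod p$, the goal is then to produce two elements $x,x'\in I_i$ with $f(x)\not\equiv f(x')\pmod n$.

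The engine is a block decomposition in the spirit of the interval count $M_{ij}$ from the proof of Theorem \ref{biggcd}. For $u=0,1,\dots,A-1$ let $B_u=\{x:\ up/A\le x<(u+1)p/A\}$, so that $Ax\bmod p=Ax-up$ on $B_u$. A direct computation shows that for $x\in I_i\cap B_u$ one has $f(x)\equiv Ai-up\pmod n$ if $x$ is a quadratic residue and $f(x)\equiv (u+1)p-Ai\pmod n$ if $x$ is a nonresidue. Since each block has length $p/A>n$, every block meets $I_i$; hence if $f(I_i)\subseteq I_j$, then reading off these two congruences forces every index $u\in\{0,\dots,A-1\}$ to satisfy $u\equiv u_0\pmod n$ or $u\equiv u_1\pmod n$, where $u_0\equiv(Ai-j)p^{-1}$ and $u_1\equiv (Ai+j)p^{-1}-1$ are the residues imposed by the residue and nonresidue cases. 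For $A\ge3$ and $n\ge3$ this is already contradictory: as $p$ is invertible mod $n$, the integers $0,1,\dots,A-1$ meet $\min(A,n)\ge3$ distinct classes mod $n$, and these cannot all lie in the two-element set $\{u_0,u_1\}$. Thus the case $C\ge3$ follows at once (indeed well below the stated threshold), and one is left with the delicate case $A=C=2$, together with $n=2$, which is the classical Goresky--Klapper problem.

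For $A=2$ the block condition is satisfiable, and the hypothesis collapses to the assertion that the quadratic character is \emph{split at} $p/2$ along $I_i$: either $\left(\frac{x}{p}\right)=+1$ exactly for $x<p/2$ throughout $I_i$, or the same with signs reversed; equivalently both $I_i\cap[1,p/2)$ and $I_i\cap(p/2,p)$ are monochromatic of opposite type. Indeed, were either half to contain both a residue and a nonresidue, it would share a character value with the other half and supply $x<p/2<x'$ in $I_i$ with $\left(\frac{x}{p}\right)=\left(\frac{x'}{p}\right)$, whence $f(x)\not\equiv f(x')\pmod n$ (their residues differing by $\pm p$).

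Ruling out this split is the step I expect to be the main obstacle. For $i=0$ it is clean: if every multiple of $n$ below $p/2$ were a residue, then since $\left(\frac{tn}{p}\right)=\left(\frac{t}{p}\right)\left(\frac{n}{p}\right)$ and the value at $t=1$ fixes $\left(\frac{n}{p}\right)$, every $t\le p/(2n)$ would be a residue, contradicting the elementary bound that the least quadratic nonresidue lies below $\sqrt p+1\le p/(2n)$ (valid once $p>4n^2$). For general $i$ the contradiction should come from multiplying a small $x\in I_i$ by a quadratic residue $c\equiv1\pmod n$ chosen so that $cx$ lies just beyond $p/2$ but below $p$: then $cx\in I_i$ retains the character of $x$ while changing sides of $p/2$, contradicting monochromaticity. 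Guaranteeing a residue $c$ of the prescribed congruence in the short window that carries $x$ across $p/2$ is precisely what requires the interval to be long enough, and optimizing this length requirement is what I expect to produce the threshold $p>(2Cn+1)^2$.
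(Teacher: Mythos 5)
Your reduction to $A=C$ and the block decomposition are sound, and the pigeonhole step for $C\ge 3$, $n\ge 3$ (the block indices $0,1,\dots,A-1$ meet at least three classes mod $n$, which cannot all satisfy one of the two congruences $u\equiv u_0$ or $u\equiv u_1$) is a genuinely different and rather elegant route to that subcase, requiring no character-sum input at all. But the proof has two real gaps, and they sit exactly where the lemma is hardest. First, the step you defer --- showing that $I_i\cap(0,p/2)$ and $I_i\cap(p/2,p)$ cannot be monochromatic of opposite type when $C=2$ and $i\neq 0$ --- is the crux, and your proposed mechanism (a quadratic residue $c\equiv 1\bmod n$ carrying a small $x\in I_i$ across $p/2$) is not carried out. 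Worse, the natural way to complete it fails to reach the stated threshold: with $x\le n$ the least element of $I_i$, the admissible $c$ lie in a window of the progression $1\bmod n$ containing about $p/(2xn)\ge p/(2n^2)$ terms, so guaranteeing a residue there by any run-length bound needs $p\gg n^4$, far above $(2Cn+1)^2$. The missing ingredient is the one the paper leans on: by Hummel's theorem \cite{Hummel} no translate of an integer interval of length $\ge\sqrt p$ is monochromatic for the Legendre symbol when $p\equiv 1\bmod 4$ (which is forced here since $\gcd\bigl(\tfrac{p+1}{2},p-1\bigr)=1$), and since $\left(\frac{i+\ell n}{p}\right)=\left(\frac np\right)\left(\frac{in^{-1}+\ell}{p}\right)$ with $\ell$ running over roughly $p/(2n)>2\sqrt p$ consecutive integers, monochromaticity of either half of $I_i$ is impossible for \emph{every} $i$ --- no multiplicative trick is needed. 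The paper uses precisely this to place two quadratic residues of $I_i$ in each of $(0,p/C)$ and $(p/C,2p/C)$, whose images $Ci$ and $Ci-p$ are distinct mod $n$, an argument uniform in $C\ge 2$ and $n\ge 2$.

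Second, the case $n=2$ with $C\ge 3$ is left entirely unproved: calling it ``the classical Goresky--Klapper problem'' is not an argument, and the lemma is asserted for all $n\ge 2$ (your pigeonhole needs $n\ge 3$, while the paper's Hummel argument does not). A smaller but genuine point: your dismissal of the exceptional proper inclusion $|I_j|=|I_i|+1$ --- that deleting one point ``cannot change whether the image meets two residue classes'' --- is false as stated, since a second residue class might be witnessed by exactly the deleted element; this is why the paper insists on \emph{two} residues and \emph{two} nonresidues in each subinterval before passing to $f^{-1}$.
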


\begin{proof}  Suppose first that $A=C$ and $0\leq i<n$. Consider the sets
\begin{align*} U_1 & =\{ u\in \mathbb Z\; :\; 0\leq  u < (p/C-i)/n\}, \\
U_2 & =\{ u\in \mathbb Z\; :\; (p/C-i)/n \leq u \leq (2p/C-i)/n\}, \end{align*}
with $u=0$ excluded from $U_1$ when $i=0$.
Since $p>(2Cn+1)^2$ we have
$$ |U_i|\geq \frac{p}{Cn}-1 >2\sqrt{p}, $$
and thus by the result of Hummel \cite{Hummel}, any translate of these intervals must contain at least two quadratic residues and two nonresidues.
 Hence there will be  $u_1,u_2$ in $U_1$ and $u_3,u_4$ in $U_2$ with 
  $$
\left( \frac{in^{-1}+u_l}{p}\right)=\left(\frac{n}{p}\right), \quad \quad l=1,2,3,4,
$$
and therefore
$$
\left(\frac{i+u_ln}{p}\right)=1, \quad \quad l=1,2,3,4.
$$
 Note that
 \begin{align*}
0&<i+u_1 n<p/A, \quad \quad l=1,2,\\
p/A &<i+u_ln<2p/A\leq p, \quad \quad l=3,4,
\end{align*}
and thus $i+u_ln \in I_i$, $1 \le l\le 4$ with by \eqref{reduction},
\begin{align*}
 f(i+u_l n) & =C(i+u_ln)\equiv Ci \text{ mod } n,\quad \quad l=1,2\\
  f(i+u_ln)& = C(i+u_ln)-p  \equiv Ci-p \text{ mod } n, \quad \quad l=3,4.
  \end{align*}
These two values must be distinct mod $n$. 

Finally, if $A \neq C$ then we replace $f(x)$ with $-f(x), f^{-1}(x)$ or $-f^{-1}(x)$ to make $A=C$, and note that passing to $f^{-1}(x)$ presents no new difficulties because each $I_i$ had at least two quadratic residues and two quadratic nonresidues.
 \end{proof}

\begin{lemma} 
\label{squarelemma}  Suppose that
$ f(x)=Ax^{(p+1)/2} \text{ mod } p,$ with $A \neq \pm 1$, 
$n\geq 2$ and $p>9n^2$. Then $f(x)$ is not a Type (iii) map.
\end{lemma}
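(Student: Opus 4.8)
The plan is to argue by contradiction from a hypothetical containment $f(I_i)\subseteq I_j$, exploiting the linearization \eqref{reduction}. First I would normalize: since replacing $f$ by $-f$, $f^{-1}$ or $-f^{-1}$ only changes $A$ to $-A$, $A^{-1}$ or $-A^{-1}$, I may take $A=C>0$, and since Lemma \ref{awkward2} already disposes of $C=2$ I may assume $C\ge 3$ (the proper-subset subtlety noted before Lemma \ref{awkward2} causes no trouble, as the argument will exhibit two distinct image classes). Writing $g(x)=Ax \bmod p$ and $u(x)=\lfloor Ax/p\rfloor$ for the ``band index'', the containment translates, using $x\equiv i$ and $f(x)=g(x)$ or $p-g(x)$ according as $x$ is a quadratic residue or not, into the statement that every $x\in I_i$ satisfies $u(x)\equiv K_1 \pmod n$ when $x$ is a residue and $u(x)\equiv K_2\pmod n$ when it is a nonresidue, where $K_1\equiv (Ai-j)p^{-1}$ and $K_2\equiv (Ai+j)p^{-1}-1\pmod n$. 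In particular $u(x)\bmod n$ is confined to the two-element set $\{K_1,K_2\}$ for all $x\in I_i$.

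The easy regime is when the working multiplier satisfies $A\le p/n$: then each band $\{x:u(x)=u_0\}$ is an interval of length $p/A\ge n$, hence meets every residue class mod $n$, and for $n\ge 3$ there is a band index $u_0\not\equiv K_1,K_2\pmod n$ whose band contains a point of $I_i$, immediately contradicting the confinement. (When $n=2$ the two-class condition is vacuous and one falls back on the quadratic-residue input below, exactly as in Lemma \ref{awkward2}.) For the remaining range $p/n<A<p/2$ I would track the band index along $x_m=i+mn$: consecutive values jump by $q$ or $q+1$, where $q=\lfloor An/p\rfloor\ge 1$, the two jump sizes occurring in a Sturmian pattern of density $\{An/p\}$. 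The point is that a sequence taking steps $q$ or $q+1$ mod $n$ cannot remain inside $\{K_1,K_2\}$ unless $q\equiv 0$ or $q\equiv -1\pmod n$ with $K_1,K_2$ adjacent; the hypothesis $p>9n^2$ makes $N_i$ large enough that both jump sizes genuinely occur and a forbidden value is realised, forcing a point of $I_i$ into a forbidden band.

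This leaves the degenerate cases $q\equiv 0,-1\pmod n$, which pin $A$ near a rational multiple of $p$ (in particular near $p/2$), and where the band index really can stay in $\{K_1,K_2\}$. Here the confinement says that the quadratic character of $x$ is dictated by its band index, so within a single band every element of $I_i$ would have to share one character. I would break this with the residue-distribution input: since $p>9n^2$ the relevant subintervals have length about $p/(3n)>\sqrt p$, so by Hummel \cite{Hummel} each contains both a quadratic residue and a nonresidue; translating back through \eqref{reduction} as in Lemma \ref{awkward2} then produces two elements of $I_i$ in the same band but of opposite character, whose images fall in distinct classes mod $n$, the desired contradiction.

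I expect the main obstacle to be precisely this near-$p/2$ (equivalently large-$C$, narrow-band) regime: the combinatorial forbidden-band argument degenerates exactly when $An$ is close to a multiple of $p$, and one must show that the Sturmian band-index pattern cannot conspire with the quadratic character to keep all of $f(I_i)$ in a single class. This is where the threshold $9n^2=(3n)^2$ is spent, guaranteeing that every relevant interval of length $p/(3n)$ exceeds $\sqrt p$ and hence carries residues and nonresidues of both kinds.
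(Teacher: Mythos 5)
Your approach is genuinely different from the paper's, and your ``easy regime'' $3\le A\le p/n$ is correct and clean: among the band indices $0,1,2$ at least one is $\not\equiv K_1,K_2\pmod n$ for $n\ge 3$, and a band of length $p/A\ge n$ meets $I_i$. The paper instead runs a single counting argument throughout: with $M_{ij}=|\{x\in I_i: Ax \bmod p\in I_j\}|$, the containment $f(I_i)\subseteq I_j$ forces $|I_i|\le M_{ij}+M_{i\overline{j}}$, so it suffices to prove $2M_{ij}<p/n-1$; the bound $M_{ij}\le\left(\lfloor(C-1)/n\rfloor+1\right)\left(\lfloor(p/C)/n\rfloor+1\right)$ handles every range of $C$ except narrow windows just below $p/2$ (and below $p/4$, $p/6$ for $n=4,3$), which are excluded by the observation that $C$ and $\pm C^{-1}$ cannot both lie in such a window (a congruence such as $\pm4\equiv(p-2C)(p-2C')\bmod p$ with both sides of absolute value less than $p$ and mismatched parity). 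Notably the paper needs no character-distribution input at all once $C\ge 3$.

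The genuine gap is in your middle regime $p/n<A<p/2$. Your claim that a walk with steps $q$ or $q+1$ confined to $\{K_1,K_2\}$ mod $n$ forces $q\equiv 0$ or $-1\pmod n$ is not correct: writing $\delta=K_2-K_1$, each realized step must lie in $\{0,\delta,-\delta\}$ mod $n$, and besides your cases there are the configurations $\{q,q+1\}\equiv\{\delta,-\delta\}$ (i.e.\ $2\delta\equiv\pm1\pmod n$), the single-step-size case with step $\equiv\pm\delta$ and $2\delta\equiv 0$ (i.e.\ $n$ even, $\delta=n/2$, $A$ just below $p/2$), and the case $K_1\equiv K_2$. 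Indeed, since $1\le q<n/2$ in this range, $q\equiv 0$ or $-1\pmod n$ is actually impossible, so your characterization would falsely imply there are no degenerate cases at all. The configurations that genuinely survive force the band index, and hence the quadratic character of $i+mn$, to alternate strictly in $m$ over roughly $p/n$ consecutive terms; your Hummel step as stated (each interval of length exceeding $\sqrt p$ contains both a residue and a nonresidue) does not refute an alternating pattern. It can be repaired --- an alternating Legendre pattern on $N,\dots,N+H$ yields a constant pattern on the $H/2$ consecutive integers $N\overline{2}+k$, so Hummel applies after dilating by $\overline{2}$, and $p>9n^2$ gives $p/n>3\sqrt p$ --- but this case analysis and its repair are missing, so the proof is not complete as written.
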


\begin{proof}  Suppose first that $n \ge 4$ and that $C$ is as given in \eqref{Cdef}.  Lemma \ref{awkward2} dispenses with the case $C=2$ and so we assume $3 \le C <p/2$, $p>9n^2$.
\comments{For $n=3$, $k=(p+1)/2$ we have $L=2,$ and  the proof of Theorem 4.1 gives $M>p/9-3$, where
$$ |E|\leq \left( \frac{4}{\pi^2} \log p+0.381\right)p^{\frac{1}{2}}+ \left( \frac{4}{\pi^2} \log p+0.381\right)^2p^{\frac{1}{2}}< \frac{p}{9}-3, $$
for $p\geq 66975$.
}
Writing $\overline{j}=p-j$ mod $n$, and
$$ M_{ij}=\abs{\{x\in I_i \; : \; Ax \text{ mod } p \in I_j\}}, $$
if $f(I_i)\subseteq I_j$ then by \eqref{reduction} the image of $I_i$ under the linear map $Ax$ mod $p$ must lie in
$I_j$ or $I_{\overline{j}}$ and we must  have $|I_i|\leq M_{ij} +M_{i\overline{j}}.$
Hence to rule out a Type (iii) it will be enough to check that for all $i,j$
\be \label{2M}  2M_{ij} < \frac{p}{n}-1. \ee
We proceed as in the proof of Theorem \ref{biggcd} considering various ranges for the size of $C$.
In the cases where we need to replace $f$ with $f^{-1}$ and delete one element from the larger $I_i$, we  have for some $j$, $|I_i|-1=|I_j|>p/n-1$  and so it is still
enough to show \eqref{2M} for $f^{-1}$.

\vspace{1ex}
\noindent
{\bf  High $C$'s.} For $n\geq 5$ we claim that we cannot have $C$ in the range
 $$
  \displaystyle \frac{p}{2}-\frac{1}{2}\sqrt{p-4} <  C \leq \frac{p}{2}.
  $$
Indeed, in this case either $C^{-1}$ or $-C^{-1}$ is also in this range, say $C':=\pm C^{-1}$. Then
$$ \pm 4 \equiv  (p-2C)(p-2C') \quad \text{ mod } p, $$
but $| (p-2C)(p-2C') \pm 4|<4+\sqrt{p-4}^2=p$ and parity rules out equality.
Similarly, for $n=4$ we claim that we cannot have $C$ in either of the intervals   
 $$ \frac{p}{s} -\frac{1}{4}\sqrt{p-16} < C \leq \frac{p}{s}, \quad \quad s=2,4.
$$
Indeed, in this case with $C'$ as defined before for some $s' \in \{2,4\}$,
\begin{align}
\pm 16 &\equiv \left(\frac 4s p-4C\right)\left(\frac 4{s'} p -4C'\right)\quad  \text{mod $p$}, \label{what}\\
0 &<\left(\frac 4s p-4C\right)\left(\frac 4{s'} p -4C'\right)< p-16,\notag
\end{align}
implying equality in \eqref{what} which cannot occur since $4 \nmid \frac 4s p-4C$ or  $\frac 4{s'}p-4C$.

\vspace{1ex}
\noindent
For the remaining ranges we use the inequality in \eqref{mij1}, \eqref{mij2},
\begin{equation} \label{mij3}
M_{ij} \le \left( \left\lfloor \frac{C-1}{n}\right\rfloor  + 1\right)  \left(  \left\lfloor \frac{p/C  }{ n } \right\rfloor +1          \right)
     \le \frac{p}{n^2} +\frac{C}{n} + \frac{p}{Cn} +1. 
\end{equation}

\noindent
{\bf  Middle  $C$ values.} Suppose that $2n \leq C \leq p/2n$.
Since $p> 9n^2$ we have  $2n\leq \sqrt{p} \leq p/2n$ and
$$ M_{ij} <\frac{p}{n^2}+\frac{C}{n}+ \frac{p}{Cn}+1 \leq \frac{3p}{2n^2}+3\leq \frac{3p}{8n}+3. $$
Hence,
$$ 2M_{ij} <\frac{3p}{4n}+6 \leq \frac{p}{n}-1, $$
for $p\geq 28n$, which holds for $p>9n^2$ and $n \ge 4$.  

\vspace{1ex}
\noindent
{\bf  Small  $C$ values.} Suppose that $n < C < 2n$. Then by \eqref{mij3}, 
$$ M_{ij}  < 2\left( \frac{p}{Cn}+1\right) \leq \frac{2p}{5n} +2 $$
and $2M_{ij}<p/n-1$ for $p>25n$, which holds as before.


\vspace{1ex}
\noindent
{\bf  Very Small   $C$ values.} Suppose that $3\leq C\leq n$. Then by \eqref{mij3}, 
$$ M_{ij}  < \left( \frac{p}{Cn}+1\right) \leq \frac{p}{3n} +1 $$
and $2M_{ij} < 2p/3n+2 <p/n-1$  as long as $p>9n$.

\vspace{1ex}
\noindent
{\bf  Large $C$ values.} Suppose that $n\geq 5$ and  $p/n < C < p/2-\frac{1}{2}\sqrt{p-4}$,
or $n=4$ and $p/n <C < p/2-\frac{1}{4}\sqrt{p-16}$.
For $n\geq 5$ we get by \eqref{mij3},
$$ M_{ij}  \leq  \left( \frac{C-1}{n}+1\right) \cdot 1 \leq  \frac{p}{2n} - \frac{\sqrt{p}}{2n} +1$$
and $2M_{ij}<(p/n)-1$ for $p>9n^2$.

For $n=4$ we have $2M_{ij} \leq \frac{p}{n}- \frac{\sqrt{p}}{2n} +2 < \frac{p}{n}-1$ for $p>36n^2$, but there
are no values giving Type (iii) with $k=(p+1)/2$ and $p<576$.

\vspace{1ex}
\noindent
{\bf  Largish  $C$ values.} Suppose that  $n\geq 5$ and $p/2n< C \leq p/n$,
or $n=4$ and $p/2n< C\leq p/4-\frac{1}{4}\sqrt{p-16}$).
If  $n\geq 5$  then by \eqref{mij3},
$$ M_{ij}  \leq  \left( \frac{C-1}{n}+1\right) \cdot 2 \leq  2\frac{p}{n^2} +2\leq \frac{2p}{5n}+2$$
and $2M_{ij}<(p/n)-1$ for $p>25n$.

For $n=4$ we get from \eqref{mij3}, 
$$ 2M_{ij}\leq \frac{p}{4} -\frac{1}{4}\sqrt{p} +4 < \frac{p}{4}-1 $$
for $p>20^2$.  There are no examples with $p<400$.

\vspace{2ex}
\noindent
{\bf The Case $n=3$.} 
 It remains to deal with $n=3$. From our computations we know that there are no Type (iii) mappings with $6<p<20,000$. Replacing $A$ by $A^{-1}$ as necessary we may assume that our $C$
does not lie in any of the intervals
$$ U_s=\left(\frac{p}{s}-\frac{1}{12}\sqrt{p-144}\;, \; \frac{p}{s}\right), \;\;\;s=2,4 \hbox{ or } 6.$$
To see this observe that  if $C=\pm A$ is in $U_s$ and $C'=\pm A^{-1}$ is in $U_{s'}$ then
$$ \pm 144 \equiv (12p/s-12C)(12p/s'-12C') \text{ mod  }p,   \;\;\;0< (12p/s-12C)(12p/s'-12C')<p-144, $$
where $2^2\nmid 12p/s-12C$ or $12p/s'-12C'$ rules out equality.

For $2\leq C \leq 9$ from Lemma \ref{awkward2} there are no such Type (iii) with $p>3025$.

For $9\leq C\leq p/9$  and $p>243$ we have
$$2M_{ij} \leq 2\left(\frac{p}{9}+\frac{C}{3}+\frac{p}{3C}+1\right) \leq \frac{8p}{27}+8<\frac{p}{3}-1.$$

For $p/9 < C < p/6 - \frac{1}{12}\sqrt{p-144}$  and $p>1764 $ we have
$$ 2M_{ij}\leq 2\left(  \frac{C-1}{3}+1  \right) \cdot 3 \leq   \frac{p}{3}-\frac{\sqrt{p}}{6} +6 <\frac{p}{3}-1. $$

For $p/6 < C < p/4 - \frac{1}{12}\sqrt{p-144}$  and $p>2025 $ we have
$$ 2M_{ij}\leq 2\left(  \frac{C-1}{3}+1  \right) \cdot 2 \leq   \frac{p}{3}-\frac{\sqrt{p}}{9} +4 <\frac{p}{3}-1. $$

For $p/3 < C < p/2 - \frac{1}{12}\sqrt{p-144}$  and $p>2916 $ we have
$$ 2M_{ij}\leq 2\left(  \frac{C-1}{3}+1  \right) \cdot 1 \leq   \frac{p}{3}-\frac{\sqrt{p}}{18} +2 <\frac{p}{3}-1. $$

That just leaves the case where $p/4<C<p/3$. We deal with the map
$$ g(x)=Cx^{(p+1)/2}\equiv\left(\frac{x}{p}\right) Cx \text{ mod } p $$
directly on $I_0=\{3,6,9,\ldots\}$, $I_1=\{1,4,7,\ldots\}$ and $I_2=\{2,5,8,\ldots\}$.

For $I_0$ observe that $g(6)=6C-p$ or $2p-6C\equiv 2p$ mod $3$ while $g(9)=9C-2p\equiv p$ mod 3 giving us an element in $I_1$ and an element  in $I_2$.

For $I_1$ we have $g(1)=C$ and $g(4)=4C-p$ and these are distinct mod $3$.

For $I_2$ we have
\begin{align*} \left(\frac{2}{p}\right) & =1 \;\; \Rightarrow \;\;g(2)=2C,\;\; g(8)=8C-2p,\\
 \left(\frac{2}{p}\right) & =-1 \;\; \Rightarrow \;\;g(2)=p-2C,\;\; g(8)=2p-8C, \end{align*}
and in either case these are distinct mod $3$.

This deals with the case  $C=\pm A$ and $f(x)=\pm g(x)$ or if $C=A^{-1}$ in the case when $f(I_i)\subseteq I_j$
and the $|I_i|=|I_j|$ as must happen when $p\equiv 1$ mod 3. This just  leaves the case where $p\equiv 2$ mod 3 and $f(I_0)$ or $f(I_2)$ equals $I_1\setminus\{a\}$ when the missing $a=1$ or $4$.
 But  notice that when $p\equiv 2$ mod $3$ we have $(p-1)$ and $(p-4)$ in $I_1$ where
$g(p-x)\equiv -g(x) \text{ mod }p$. Hence $g(p-1) =p-C$ and $g(p-4)=2p-4C$ and these two values are again distinct mod 3.
\end{proof}

\section{Proof of Example \ref{ex1}}\label{ProofEx}

\begin{proof}[Proof of Example \ref{ex1}] Suppose that $f(x)=\pm x^{(p+1)/2}$ mod $p$. We have
$$x^{(p+1)/2} =x\cdot  x^{(p-1)/2}\equiv x \left(\frac{x}{p}\right) \equiv \pm x \text{ mod } p, $$
and  $f(x)=x$ or $p-x$, where $(p-x)\equiv x$ mod $n$ exactly when  $x\equiv2^{-1}p$ mod $n$
if  $n$ is odd and in no cases if  $n$ is even, and the first claim  is plain.

If $n$ is even, or $n$ is odd and $i \neq 2^{-1}p$ mod $n$, then $x\not\equiv  p-x$ mod $n$ for $x$ in $I_i$, and $f(I_i)$ will hit  two different residue classes as long as  $I_i$ contains both quadratic
residues and nonresidues.  Suppose that $\left(\frac{x}{p}\right)$ is constant on $I_i$, then $\left(\frac{n^{-1}i+y}{p}\right)$
is constant for $y$ in an interval of length $|I_i|$. But by \cite{Hummel} there are less than $\sqrt{p}$ consecutive residues or nonresidues, and for $p>(n+1)^2$ we have $|I_i|>p/n -1 >\sqrt{p}$. \end{proof}

\section{Proofs of Theorems \ref{2xp2}, \ref{1xp2}, \ref{central}, \ref{p/3}, \ref{3xp2}  and \ref{legendre}}\label{SpecialTypeiii}

\begin{proof}[Proof of Theorem \ref{2xp2}]  Notice that
$$  f(x)=2x^{(p+1)/2} \text{ mod } p = 2\left(\frac{x}{p}\right)x \text{ mod } p. $$
Since $p=1$ mod $4$ the quadratic residue property gives us
\be \label{residuecondition} \left(\frac{m}{p}\right)=\begin{cases}  +1, & \text{ if $m=1$ mod $4$,} \\  -1, & \text{ if $m=3$ mod $4$,} \\      \end{cases}\ee
for any integer  $m$ with $1\leq m\leq 4t-1$.

Suppose first that $i=(2p-(4t-1)n)/4$.
Since $n\equiv 2$ mod $4$ we are guaranteed that $i=(2p-(4t-1)n)/4$ is an integer, with $i>0$ from the upper bound in \eqref{nrange}, and $i<n$ for $n>2p/(4t+3)$ which certainly follows from the lower bound. Plainly
\be \label{congi} 4i\equiv 2p \text{ mod }n. \ee

The lower bound in \eqref{nrange} is to ensure that  $i+2tn \geq p$ so that the elements $x$ of $I_i$ can be written
$ x=i+\ell n$ with $0\leq \ell \leq 2t-1$, where $n<2p/(4t-3)$ from the upper bound ensures that $i+(2t-1)n<p$.  Writing $x=i+\ell n$ we have
$$ \left(\frac{x}{p}\right) = \left(\frac{4x}{p} \right) = \left(\frac{-(4t-1)n+4\ell n}{p}\right)=\left(\frac{n}{p}\right) \left(\frac{4\ell -(4t-1)}{p}\right). $$
Since $\left(\frac{-1}{p}\right)=1$ we get from \eqref{residuecondition} that
$$\left(\frac{x}{p}\right) =\left(\frac{n}{p}\right) \left(\frac{4(t-\ell)-1}{p}\right)=-\left(\frac{n}{p}\right), \;\; \ell=0,\ldots ,t-1, $$
and
$$\left(\frac{x}{p}\right) =\left(\frac{n}{p}\right) \left(\frac{4(\ell-t)+1}{p}\right)=\left(\frac{n}{p}\right), \;\; \ell =t,\ldots ,2t-1. $$
Notice that $2x<p$ iff $4\ell<4t-1$, that is $0<2x<p$ for $0\leq \ell \leq t-1$ and $p<2x<2p$ for $\ell=t,\ldots ,2t-1.$
Hence
$$ \left(\frac{n}{p}\right)=-1 \;\; \Rightarrow \;\;  f(x) = \begin{cases} 2x, & \text{ if $\ell=0,\ldots ,t-1$,}  \\ 2p-2x, & \text{ if $\ell=t,\ldots 2t-1$,}\end{cases}$$
while
$$\left(\frac{n}{p}\right)=1 \;\; \Rightarrow \;\;
 f(x) = \begin{cases} p-2x, & \text{ if $\ell=0,\ldots ,t-1$,}  \\ 2x-p, & \text{ if $\ell=t,\ldots ,2t-1$.}\end{cases}$$
Condition \eqref{congi} ensures that $f(x)$  takes the value $j$ mod $n$ in \eqref{defi} for all  $x$ in $I_i$.

Similarly for $i=(2p-(4t-3)n)/4$ we have $0< i<n$ and $I_j=\{x=i+\ell n\; :\; 0\leq \ell \leq 2t-1\}$ for $2p/(4t+1)<n<2p/(4t-3)$
with $2x<p$ iff $\ell \leq t-1$. This time
$$ \left(\frac{x}{p} \right)= \left(\frac{n}{p}\right) \left(\frac{4t-3-4\ell}{p}\right) =\left(\frac{n}{p}\right), \;\; \ell=0,\ldots ,t-1,$$
and
$$ \left(\frac{x}{p}\right) = \left(\frac{n}{p}\right) \left(\frac{4\ell -4t+3}{p}\right) =-\left(\frac{n}{p}\right), \;\; \ell=t,\ldots ,2t-1,$$
giving the same forms for $f(x)$, but with the role of $\left(\frac{n}{p}\right)=1$ or $-1$ reversed.
\end{proof}

\begin{proof}[Proof of Theorem \ref{1xp2}]  Let $f(x)= \left(\frac np\right) x^{(p+1)/2}$.
Since $p\equiv 1$ mod $8$ the quadratic residue condition says that $\left(\frac{\ell}{p}\right)=1$ for $1\leq \ell\leq t$.
We have $I_0=\{\ell n\; : \; 1\leq \ell \leq \lfloor (p-1)/n\rfloor\}$, where  $\lfloor (p-1)/n\rfloor\leq t$. Hence for the $x=n\ell$ in $I_0$ we have
 $ f(x)\equiv \left(\frac{n}{p}\right) \left(\frac{n\ell}{p}\right) x  \text{ mod } p =x. $
\end{proof}

\begin{proof}[Proof of Theorem \ref{central}]  Observe that $\left( \frac{ \frac{1}{2}(p-1)-i}{p}\right)=\left(\frac{2}{p}\right) \left(\frac{2i+1}{p}\right)$, reducing  the consecutive residues or nonresidues about $p/2$  condition  to $\left(\frac{2i-1}{p}\right)=1$ all $1\leq i \leq T$.

For $i=an-(n/2-1+\delta/2)p=p(1-\delta/2)-n(t-1)/2$, where $\delta =0$ for $n$ even and $1$ for $n$ odd, we have $i>0$ for $n<(2-\delta)p/(t-1)$, and $i<n$ for $n>(2-\delta)p/(t+1)$. We also have $i+nt=p(1-\delta/2)+n(t+1)/2>p$ (immediately for $n$ even and from $n>p/(t+1)$ for $n$ odd).

Hence $x$ in $I_i$ can be written $i+\ell n$ with $0\leq \ell <t$ and 
\be  \left(\frac{x}{p}\right)= \left(\frac{an+\ell n}{p}\right) =\left( \frac{n}{p}\right) \left(\frac{a+\ell}{p}\right)=\left( \frac{n}{p}\right) \left( \frac{2}{p}\right).   \qedhere \ee

\end{proof}

\begin{proof}[Proof of Theorem \ref{p/3}] Since $p\equiv 1$ mod $4$ we have $\left(\frac{ \frac{1}{3}(p-\delta) \pm i}{p}\right)= \left(\frac{3}{p}\right)\left(\frac{3i\mp \delta}{p}\right)$ and the equivalent form \eqref{equiv} is plain.
Writing $\ve=1,2$ or $3$ as $n\equiv \ve$ mod $3$, suppose that  $i=na_1-(n-\ve)p/3=\ve p/3 -n(3T_1-3+\delta)/3$
and $i>0$ for $n<\ve p /(3T_1-3+\delta)$ and $i<n$ for $n> \ve p/(3T_1+\delta)$.
We also have  $i+(T_1+T_2)n=\ve p/3 +(T_2+1-\delta/3)n\geq p$ automatically for $\ve =3$, and for $\ve =1$ or $2$ if  $n\geq (3-\ve)p/(3T_2+3-\delta)$ which follows from $n>\ve p/(3T_1+\delta)$ for $\ve T_2 +\ve \geq \delta +(3-\ve)T_1$.
Hence $x$ in $I_i$ can be written $x=i+\ell n$ with $0\leq \ell < T_1+T_2$ and $\left(\frac{x}{p}\right)=\left(\frac{na_1+\ell n}{p}\right)=\left(\frac{n}{p}\right)\left(\frac{a_1+\ell}{p}\right)=\left(\frac{3n}{p}\right)$.
The remaining cases are similar with $i=\ve' p/3 -n(T_2-\delta/3)$ where $\ve'=3,2,1$ as $n\equiv 0,1$ or $2$ mod $3$.
\end{proof}

\begin{proof}[Proof of Theorem \ref{3xp2}]  For $i=na-(s-u)p$ the upper bound $n\leq (s-u)p/(a-1)$  in \eqref{nrange}  ensures that $i\leq n$, and the lower bound $n>(s-u)p/a$ that  $i>0$.
From $n\geq (s-u+1)p/(a+t)$ we also have  $i+tn \geq p,$ so that the elements of $I_i$
can be written $x=i+\ell n$ with $0\leq \ell <t$ and
$$ \left(\frac{x}{p}\right)=\left(\frac{i+\ell n}{p}\right)= \left(\frac{na+\ell n}{p}\right)=\left(\frac{n}{p}\right) \left(\frac{a+\ell}{p}\right)=\left(\frac{na}{p}\right) . $$

Since the gap between our upper and lower bounds in \eqref{nrange}  is
$$\min\left\{ \frac{(s+1-r-u(t+1))}{(a-1)(a+t)},\; \frac{(s-u)}{a(a-1)}\right\}p> \frac{p}{(a+t)^2},$$
we are guaranteed an $n$ if $(a+t)<\sqrt{p}$.

The proof for $i=(s-u+1)p-n(a+t-1)$ is similar.
\end{proof}

\comments{Similarly, if $ i=(s-u+1)p-n(a+t-1)$ and 
\be \label{nrange2}\frac{(s+1-u)p}{a+t} \leq n \leq  \min\left\{ \frac{s-u}{a-1}, \; \frac{s+1-u}{a+t-1}\right\} p, \ee
for some integer $s+1\geq u > s+1-\frac{1}{2}(a+t)$, then \eqref{deff} is the identity on $I_i$.

For $i=(s-u+1)p-n(a+t-1)$ the lower bound in \eqref{nrange} ensures that $i\leq n$, the upper bound  $n\leq (s-u)p/(a-1)$ that $i+tn\geq p$,
and $n\leq (s-u+1)p/(a+t-1)$ that $i> 0$. So
again $x=i+\ell n$ with $0\leq \ell <t$ and
$$\left(\frac{x}{p}\right)=\left(\frac{i+\ell n}{p}\right)= \left(\frac{n(a+t-1)-\ell n}{p}\right)=\left(\frac{n}{p}\right)\left(\frac{a+(t-1-\ell)}{p}\right)=\left(\frac{na}{p}\right) . $$
Writing $f(x)=\left(\frac{an}{p}\right)\left(\frac{x}{p}\right) x$ mod $p$ we have $f(x)=x$ on the two specified $I_i$.

Since the gap between our upper and lower bounds in \eqref{nrange}  is
$$\min\left\{ \frac{(s+1-r-u(t+1))}{(a-1)(a+t)},\frac{(s+1-u)}{(a+t-1)(a+t)}\right\}p> \frac{p}{(a+t)^2},$$
we are guaranteed an $n$ if $(a+t)<\sqrt{p}$.}

\begin{proof}[Proof of  Theorem  \ref{thirds}]  As in the proof of Theorem \ref{central} we can write any $x$ in $I_i$ in the form $x=i+\ell n\equiv an + \ell n $ mod $p$ for $0\leq \ell <t$, and so by \eqref{cubicruns},
$$ x^{(p-1)/3} \equiv (an+\ell n)^{(p-1)/3} =n^{(p-1)/3}(a+\ell)^{(p-1)/3} \equiv (an)^{(p-1)/3} \text{ mod } p. $$
So for \eqref{f/3} we have $f(x)\equiv (an)^{3(k-1)} x\equiv x$ mod $p$ on $I_i$.
Similarly, for $x$ in $I_i$,  
$$ x^{(p-1)/6} \equiv n^{(p-1)/6}(a+\ell)^{(p-1)/6} \equiv \pm  (an)^{(p-1)/6} \text{ mod } p, $$
and for \eqref{f/6} we have $f(x)\equiv \pm (an)^{3(k-1)}x \equiv \pm x$ mod $p$ on $I_i$, where $p-i\equiv i$ mod $n$ for $n$ odd.
\end{proof}

\begin{proof}[Proof of  Theorem  \ref{legendre}] Suppose that $p \equiv 1$ mod 4, $p=n+w$ with $2 \le w<n$. The residue classes with two elements  consist of the pairs $I_y=\{y, -(w-y)\}, I_{w-y}=\{w-y, -y\}$, $1\leq y\leq w/2$, with the remaining classes containing one element.
If $\left(\frac{y}{p}\right),\left(\frac{w-y}{p} \right)$ both equal $1$ then $f(x)=x^{(p+1)/2}$ fixes these sets;  if both equal $-1$ it
switches the pair.
\end{proof}

\end{document}